\newtheorem{theorem}{Theorem}[section]
\newtheorem{lemma}[theorem]{Lemma}
\newtheorem{corollary}[theorem]{Corollary}
\numberwithin{equation}{section}
\theoremstyle{remark}
\newtheorem*{remark}{Remark}
\lstdefinestyle{CStyle}{
    basicstyle=\footnotesize,
    breakatwhitespace=false,         
    breaklines=true,                 
    captionpos=b,                    
    keepspaces=true,                 
    numbers=left,                    
    numbersep=5pt,                  
    showspaces=false,                
    showstringspaces=false,
    showtabs=false,                  
    tabsize=2,
    language=C
}
\title[An explicit log-free zero density estimate ]{An explicit log-free zero density estimate for the Riemann zeta-function}
\author[C. Bellotti]{Chiara  Bellotti}
\address{School of Science\\
The University of New South Wales, Canberra, Australia}
\email{c.bellotti@unsw.edu.au}
\keywords{Riemann zeta function, zero-density estimates, explicit estimates}
\subjclass[2020]{Primary 11M06, 11M26. Secondary 11Y35}
\begin{document}
\maketitle
\begin{abstract}
We will provide an explicit log-free zero-density estimate for $\zeta(s)$ of the form $N(\sigma,T)\le AT^{B(1-\sigma)}$. In particular, this estimate becomes the sharpest known explicit zero-density estimate uniformly for $\sigma\in[\alpha_0,1]$, with $0.985\le \alpha_0\le 0.9927$ and $3\cdot 10^{12}<T\le \exp(6.7\cdot 10^{12})$.
\end{abstract}
\section{Introduction}
Consider the Riemann zeta function $\zeta(s)$ and let $\rho=\beta+i\gamma$ be a zero of $\zeta(s)$ with $0<\operatorname{Re}(\rho)<1$. Studying the zeros of $\zeta(s)$ is a classical problem in number theory and several approaches have been followed throughout the years to have a better understanding of the behaviour of the zeros of $\zeta(s)$ inside the critical strip. Among them, one is finding upper bounds for the number of zeros of $\zeta(s)$ inside a fixed region. These estimates are commonly known as zero-density estimates. More precisely, for fixed $1/2<\sigma<1$ and $T>0$, we define the quantity$$
N(\sigma, T)=\#\{\rho=\beta+i \gamma: \zeta(\rho)=0,0<\gamma<T,\  \sigma<\beta<1\},
$$
which counts the number of non-trivial zeros of $\zeta(s)$ with real part greater than the fixed value $\sigma$. The goal is to find upper bounds for $N(\sigma,T)$ that are as sharp as possible. Trivially, if the Riemann Hypothesis is true then all the non-trivial zeros of $\zeta(s)$ would lie on the half-line $\sigma=1/2$ and hence $N(\sigma,T)=0$ for every $1/2<\sigma<1$.

Zero-density estimates for $\zeta(s)$ often take the form $
N(\sigma, T) \ll T^{B(1-\sigma)}(\log T)^D
$, where $B, D$ are constants which might depend on $\sigma$. There is an extensive literature on such results, some of which will be recalled in the following paragraph. Usually, a large amount of work is done in trying to minimize the exponent $B$ for a wide range of $\sigma$, while allowing $D$ to be a large absolute constant. In particular, it has been conjectured that the estimate $N(\sigma,T)\ll T^{2(1-\sigma)+\epsilon}$ holds uniformly for every $1/2<\sigma<1$ and $T>0$ (this conjecture, also known as the Density Hypothesis, is actually weaker than what one would get if the Riemann Hypothesis is true). However, our goal is to focus on estimates where $D=0$, that is, estimates of the form
\begin{equation}\label{logfreebound}
N(\sigma,T)\ll T^{B(1-\sigma)}.  
\end{equation}
This type of estimate is commonly known as a log-free zero-density estimate. There is a vast number of research works on the log-free zero-density estimates, starting with Linnik \cite{Lin44, Lin44a} and following with many other authors, including Turán \cite{Turan61}, Fogels \cite{Fogels1965}, Bombieri \cite{bombiericrible}, Jutila \cite{jutila77}, Gallagher \cite{gallagher_large_1970}, Graham \cite{graham_linniks_1981} and Heath-Brown \cite{hb92}.\\
For completeness, we mention some of the known results for $N(\sigma,T)$. In 2021, Platt and Trudgian \cite{platt_riemann_2021} verified that the Riemann Hypothesis is true up to height $3\cdot 10^{12}$ and, as a consequence, $N(\sigma,T)=0$ for every $1/2<\sigma<1$, $T\le 3\cdot 10^{12}$. Hence, we can restrict the range of values of $T$ for which we want to find an upper bound for $N(\sigma,T)$ to $T>3\cdot 10^{12}$.

Furthermore, in 2000, Bourgain \cite{Bourgain2000OnLV} proved that the Density Hypothesis is true for $\sigma\ge 25/32$ and, actually, for $25/32\le \sigma<1$ it is possible to get upper bounds for the quantity $N(\sigma,T)$ which are sharper than the Density Hypothesis (see \cite{trudgian2023optimal} and \cite{pintz2023remark} for more recent results).    
However, although the Density Hypothesis is known to be true when $\sigma$ is sufficiently close to $1$, the sharpest uniform non-explicit bound for $N(\sigma,T)$ which holds for every $1/2<\sigma\le 1$ and $T>0$ is $N(\sigma,T)\ll T^{12(1-\sigma)/5+\varepsilon}$, which can be obtained (\cite{Ivic2003TheRZ}) by combining Ingham's estimate \cite{Ingham1940ONTE} $N(\sigma,T)\ll T^{3(1-\sigma)/(2-\sigma)}\log^ 5T $ for $\frac{1}{2}\le\sigma\le \frac{3}{4}$ with Huxley's one \cite{Huxley1971/72} $N(\sigma,T)\ll T^{3(1-\sigma)/(3\sigma-1)}\log^{44}T $, which holds for $\frac{3}{4}\le\sigma\le 1$. Another important result due to Ingham is a connection between bounds for $\zeta(s)$ on the half line and $N(\sigma,T)$. More precisely, he proved \cite{ingham1937difference} that if $\zeta\left(\frac{1}{2}+i t\right)\ll t^{c+\epsilon}$ for a certain constant $c$, then $
N(\sigma, T)\ll T^{(2+4 c)(1-\sigma)}(\log T)^5
$. In particular, the Lindelöf Hypothesis $\zeta\left(\frac{1}{2}+i t\right)\ll t^\epsilon$ implies  the Density Hypothesis.\\

While non-explicit estimates for $N(\sigma,T)$ are widely studied, much less is known about explicit estimates for $N(\sigma,T)$.  Ingham's estimate has been made explicit by Ramaré \cite{ramare2016explicit} and then improved by Kadiri, Lumley and Ng \cite{KADIRI201822}. When $\sigma$ is really close to the $1/2$ line, an explicit version of Selberg's estimate \cite{selberg1946contribution} is provided by the work of Simonič in \cite{SIMONIC2020124303}. When instead $\sigma$ is really close to $1$ and $T$ quite large, the author provided an explicit version of the upper bound of the form $ N(\sigma,T)\ll T^{B(1-\sigma)^{3/2}}(\log T)^D$ \cite{bellotti2023explicit}. Explicit log-free zero density estimates for Dirichlet $L$-Functions have been found by Thorner and Zaman in \cite{thorner2023explicit}. The estimate found in \cite{thorner2023explicit} holds also for $\zeta(s)$, but in this case the exponent is too big to be used in some applications which might benefit from the existence of a log-free zero density estimate for $\zeta(s)$ and which we will list in the following paragraph.\medskip\\

The aim of this paper is to provide a new explicit log-free zero density estimate for $\zeta(s)$ of the form \eqref{logfreebound}. Furthermore, this estimate becomes the sharpest known explicit zero-density estimate for $\zeta(s)$ uniformly for $\alpha_0\le \sigma\le 1$ and $T\in(T_0,T_1]$, whose values are listed in Table \ref{valuesCB} in Appendix \ref{numerical}. The whole range of $T$ will be $3\cdot 10^{12}<T\le \exp(6.7\cdot 10^{12})$, while $\alpha_0$ is a fixed value in $[0.985,0.9927]$, and depends on the interval $(T_0,T_1]$ in which we are working.
\begin{theorem}\label{th1}
    For every $T\in(T_0,T_1]$ and $\sigma\in[\alpha_0,1]$ as displayed in Table \ref{valuesCB}, the following estimate holds:
    \[
    N(\sigma,T)\le C T^{B(1-\sigma)},
    \]
    where $C=C(\alpha_0,T_0,T_1)$ and $B=B(\alpha_0,T_0,T_1)$ and numerical values are listed in Table \ref{valuesCB} in Appendix \ref{numerical}.
\end{theorem}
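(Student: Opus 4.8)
The plan is to make fully explicit the classical zero-detection argument of Montgomery and Huxley, working in the range where $\sigma$ is close to $1$; there the method can be pushed through with explicit constants of reasonable size and, crucially, with no logarithmic losses. Fix an interval $(T_0,T_1]$ and the associated $\alpha_0$ from Table~\ref{valuesCB}, and let $\sigma\in[\alpha_0,1]$. By \cite{platt_riemann_2021} we may assume every zero $\rho=\beta+i\gamma$ under consideration has $\beta\ge\sigma$ and $3\cdot10^{12}<\gamma\le T_1$, so that $\gamma$ is bounded away from $0$ and $\Gamma$-factors in $\gamma$ will be negligible throughout.

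\emph{Zero detection.} Introduce the Möbius mollifier $M_X(s)=\sum_{n\le X}\mu(n)\,n^{-s}$, with $X$ a power of $T_1$ to be optimised, and a smoothing parameter $Y$. For a zero $\rho$ consider
\[
\frac{1}{2\pi i}\int_{(c)}\zeta(\rho+w)\,M_X(\rho+w)\,\Gamma(w)\,Y^{w}\,dw ,
\]
with $c>0$, and move the contour to $\operatorname{Re}(\rho+w)=\tfrac12$. The residue at $w=0$ equals $\zeta(\rho)M_X(\rho)=0$; the residue at $w=1-\rho$ equals $M_X(1)\,\Gamma(1-\rho)\,Y^{1-\rho}$, which is exponentially small in $\gamma$; and the shifted integral is, by the convexity bound for $\zeta$ on $\operatorname{Re} s=\tfrac12$ and $|M_X|\le 2\sqrt X$, at most $\ll Y^{1/2-\sigma}X^{1/2}T_1^{1/4+\varepsilon}$. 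Choosing $Y$ a suitable explicit power of $T_1$ this is $<\tfrac12$, and since the Dirichlet series of $\zeta M_X$ has the form $1+\sum_{n>X}a_n n^{-s}$ with $|a_n|\le d(n)$, every such zero satisfies
\[
\Bigl|\,\sum_{X<n\le N_0}a_n\,e^{-n/Y}\,n^{-\rho}\,\Bigr|\ \gg\ 1
\]
for an explicit truncation point $N_0$, again a power of $T_1$.

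\emph{Large values and optimisation.} Split the detecting polynomial as $D=D_1+D_2$ at a parameter $N_1$, and transfer each piece from the line $\operatorname{Re} s=\beta$ to the line $\operatorname{Re} s=\sigma$ using $D_j(\beta+i\gamma)=D_j(\sigma+i\gamma)+\int_\sigma^\beta D_j'(u+i\gamma)\,du$, so that at every zero one of $|D_1(\sigma+i\gamma)|$, $(1-\sigma)\sup_u|D_1'(u+i\gamma)|$, $|D_2(\sigma+i\gamma)|$, $(1-\sigma)\sup_u|D_2'(u+i\gamma)|$ is $\gg1$. For the short polynomial $D_1$, of length $N_1$, I would raise to a $k$-th power, obtaining a polynomial of length $N_1^{k}$ with divisor-type coefficients, and apply an explicit mean value theorem of the shape $\int_0^{T}\bigl|\sum_{n\le N}b_n n^{-it}\bigr|^2\,dt\le(T+cN)\sum_n|b_n|^2$ with explicit $c$; for the long polynomial $D_2$, supported on $n>N_1$, I would use an explicit large-values estimate of Montgomery--Halász type. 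The point that keeps everything log-free is that for $\sigma$ bounded away from $\tfrac12$ the coefficient sums $\sum_n d_\ell(n)^2(\log n)^{2j}n^{-2\sigma}$ converge to explicit constants depending only on $\sigma$, so neither the mollification, nor the derivative term, nor the $k$-th power ever produces a logarithm; these constants are moreover of reasonable size precisely when $\sigma$ is close to $1$, which is the range in Table~\ref{valuesCB}. Each case now yields a bound $C'T_1^{B'(1-\sigma)}$ with $B',C'$ explicit in the free parameters $X,Y,N_1,k$ and in the number of zeros per unit interval; minimising over the cases and then numerically over the parameters, interval by interval, gives the values of $B$ and $C$ recorded in the table. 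The bound $T\le\exp(6.7\cdot10^{12})$ is the threshold beyond which the estimate $N(\sigma,T)\ll T^{B(1-\sigma)^{3/2}}(\log T)^{D}$ of \cite{bellotti2023explicit} is superior, so no larger range is required.

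I expect the main obstacle to be the large-values step: one needs explicit mean value and large-values inequalities with constants small enough that the accumulated constant $C$ is not enormous, while simultaneously keeping the exponent $B$ as small as possible, and these two requirements pull against the choice of the length and smoothing parameters. A secondary difficulty is the fully explicit bounding of the contour-shift error, which rests on an explicit convexity (or mild subconvexity) bound for $\zeta(\tfrac12+it)$ valid for $|t|$ up to $\exp(6.7\cdot10^{12})$, together with a careful accounting of the density of zeros, so that the final estimate is genuinely free of logarithms.
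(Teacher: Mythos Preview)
Your proposal follows the classical Montgomery--Huxley zero-detection scheme, but the mechanism you invoke for log-freeness does not work, and it is precisely here that the paper's argument departs from yours in an essential way.

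You claim that because the full sums $\sum_{n\ge 1} d_\ell(n)^2(\log n)^{2j}n^{-2\sigma}$ converge for $\sigma>\tfrac12$, no logarithms are produced. But this is a misdiagnosis of where the logarithms in zero-density estimates come from. If you bound the coefficient sum by the full convergent series, you obtain from the mean value only $R\ll (T+N)\cdot C_\sigma$, which is $\ll T$ and gives no density exponent at all. If instead you localize to the actual support $n>X$ (which is what produces the factor $X^{1-2\sigma}$ and hence the exponent $B(1-\sigma)$), then partial summation against $\sum_{n\le u}d(n)^2\sim \pi^{-2}u\log^3 u$ forces a factor $(\log X)^3$ into the bound. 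The same happens for $D_1^k$: the coefficients are $k$-fold divisor convolutions and the localized $\ell^2$-sum carries $(\log T)^{4k^2-1}$. On top of this, passing from well-spaced representatives to all zeros costs another $\log T$ from the $\asymp\log T$ zeros per unit interval, which you acknowledge but do not cancel. So the output of your scheme is $N(\sigma,T)\ll T^{B(1-\sigma)}(\log T)^{D}$ with $D>0$, not a log-free bound.

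The paper avoids this by a device of Pintz: instead of the raw M\"obius mollifier it uses two Barban--Vehov/Graham weights $\Psi(n)=\sum_{d\mid n}\psi_d$ and $\Theta(n)=\sum_{d\mid n}\theta_d$, and applies the Hal\'asz--Montgomery inequality with $\xi_n\propto\Psi(n)$ and $\varphi_{r,n}\propto\Theta(n)n^{1/2-\rho_r}$. The duality sum then involves $\sum_n\Theta(n)^2 n^{1-\rho_r-\overline{\rho_s}}e^{-n/X}$, and after a contour shift the dominant residue is $G(1)\Gamma(2-\rho_r-\overline{\rho_s})X^{2-\rho_r-\overline{\rho_s}}$ with $G(1)=\sum_{j,k\le W}\theta_j\theta_k[j,k]^{-1}$. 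The entire point of the Graham weights is the sieve identity that forces $G(1)\ll 1/\log W\asymp 1/\log T$; this extra $1/\log T$ is exactly what cancels the $\delta\log T$ coming from the zero count $\tilde n(\alpha)$ in each small box of side $\delta=1-\alpha$, yielding $N(\alpha,T)\le C\,X^{2-2\alpha}$ with no logarithm. Your mollifier $M_X$ has no analogue of this cancellation, so the method as you outline it cannot reach the stated theorem.
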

In particular, if we want a uniform upper bound which holds for every $3\cdot 10^{12}<T\le \exp(6.7\cdot 10^{12})$ and $\sigma\in [0.9927,1]$, the following estimate holds:
\begin{equation*}
    N(\sigma,T)\le C T^{B(1-\sigma)},
\end{equation*}
with $B=1.448$ and $C=1.62\cdot 10^{11}$. The constant $C$ is quite big; however, as we explained before, Theorem \ref{th1} provides much sharper estimates depending on the size of $T$. An analysis of the values of both $C$ and $B$ listed in Table \ref{valuesCB} shows that when $T$ increases, $C$ increases, but on the contrary $B$ decreases quite consistently. This balance between increasing values of $C$ and decreasing values of $B$ when $T$ becomes larger allow us to get sharp upper bounds for $N(\sigma,T)$ in a certain region on the left of the zero-free region for $\zeta(s)$. The python program used to compute $C$ and $B$ is available on github at \cite{Bellotti_Log-free}. Furthermore, a detailed analysis of the computational aspects of this work can be found in Appendix \ref{numerical}.
\begin{remark}
    The method can be adapted to get log-free zero-density estimates for $\zeta(s)$ for values of $\sigma$ which are smaller than $0.985$, and which can be sharper than the current known estimates of the type \eqref{kadirilumleyng}, provided that $T$ is in sufficiently large ($T>\exp(90)$). However, we limit ourselves to provide a log-free zero-density estimate for $\sigma\ge 0.985$, as values of $\sigma $ which are really close to $1$ are usually needed in the majority of the applications.
\end{remark}

We will divide the proof of Theorem \ref{th1} in two cases. Given $\rho=\beta+i\gamma$ a zero of $\zeta(s)$, if $0<\gamma\le 0.447\log T$ then $\gamma\le 3\cdot 10^{12}$, since we are working with $T\le \exp(6.7\cdot 10^{12})$. Hence $N(\sigma,t)=0$ for $t\le 0.447\log T$ and $T\le e^{6.7\cdot 10^{12}}$, since the Riemann Hypothesis has been verified up to $3\cdot 10^{12}$ by Platt and Trudgian \cite{platt_riemann_2021}. From now on, we then consider $\gamma>0.447\log T$.  
\begin{remark}
The above choice of splitting the proof of Theorem \ref{th1} in two cases and using the partial verification of the Riemann Hypothesis when $\gamma\le 0.447\log T$ prevents us from finding a log-free zero-density estimate for $\zeta(s)$ which holds for every $T$. However, this is not a big limitation, since when $T$ is sufficiently large, estimates of the form $N(\sigma,T)\ll T^{B(1-\sigma)^{3/2}}\log^{D}T$ become sharper.
\end{remark}
\subsection{Importance and applications}
Log-free zero-density estimates of the form \eqref{logfreebound} are particularly powerful when $\sigma$ is sufficiently close to $1$ and close to the zero-free region, as $N(\sigma,T)$ is absolutely bounded by a constant when $\sigma\rightarrow 1$. More precisely, consider both the usual estimate with the log-factor of the form
\begin{equation}\label{logestbound}
    N(\sigma,T)\ll T^{B_1(1-\sigma)}\log^DT
\end{equation}
and the log-free zero density estimate 
\begin{equation}\label{logfreeestbound}
    N(\sigma,T)\ll T^{B_2(1-\sigma)},
\end{equation}
where $B_1,B_2,D$ are absolute constants which may depend on $\sigma$. 
Given a certain positive constant $\lambda$ which is large enough so that $1-\lambda/\log T$ is outside the zero-free region, we want to bound the quantity $N\left(1-\frac{\lambda}{\log T},T\right)$. Using \eqref{logestbound} we have
\begin{equation}\label{comingfromlog}
    N\left(1-\frac{\lambda}{\log T},T\right)\ll e^{\lambda B_1}\log^DT,
\end{equation}
while by \eqref{logfreeestbound} it follows that
\begin{equation}\label{comingfromlogfree}
    N\left(1-\frac{\lambda}{\log T},T\right)\ll e^{\lambda B_2}.
\end{equation}
If we compare \eqref{comingfromlog} and \eqref{comingfromlogfree}, it is evident that \eqref{comingfromlogfree} is much sharper than the first estimate, as the quantity $N\left(1-\frac{\lambda}{\log T},T\right)$ is absolutely bounded by a constant (which depends on $B_2$), while in \eqref{comingfromlog} the upper bound contains a factor $\log^D T$ which increases with $T$ and hence it is not absolutely bounded.\\

Theorem \ref{th1}, combined with the results in \cite{bellotti2023explicit} can lead to improvements in several other topics in computational number theory, such as the error term in the Prime Number Theorem (see \cite{yangdj, fiori_sharper_2023}), primes between consecutive powers (see \cite{CULLYHUGILL2023100,michaela2024error}), Ramanujan's inequality (see \cite{platt_error_2020, michaelavonmangoldt}).
\subsection{Comparison with existent estimates}
In this subsection, we try to compare Theorem \ref{th1} with the existing explicit zero-density estimates. Since we are working with $T\le \exp(6.7\cdot 10^{12})$, we just need to compare Theorem \ref{th1} with Kadiri-Lumley-Ng's zero-density estimate \cite{KADIRI201822} of the form
\begin{equation}\label{kadirilumleyng}
 N(\sigma,T)\le C_1T^{\frac{8}{3}(1-\sigma)}\log^{5-2\sigma}T+C_2\log^2T,   
\end{equation}
where $C_1, C_2$ are absolute constants which depend on $\sigma$. The values of $C_1,C_2$ have been slightly improved by Johnston and Yang in \cite{yangdj} and we will refer to these new values throughout the comparison with the estimate found in Theorem \ref{th1}.\\
To verify that the estimate found in Theorem \ref{th1} is always sharper than \eqref{kadirilumleyng} in all the intervals $\sigma\in[\alpha_0,1]$ and $T\in(T_0,T_1]$ listed in Table \ref{valuesCB} in Appendix \ref{numerical}, we just need to verify that in each of the above intervals the following relation holds:
\begin{equation}\label{suffcondition}
   CT^{B(1-\sigma)}\le C_1T^{\frac{8}{3}(1-\sigma)}\log^{5-2\sigma}T\qquad \forall \sigma\in[\alpha_0,1],\  T\in(T_0,T_1], 
\end{equation}
where the values of $C,B$ are displayed in Table \ref{valuesCB} in Appendix \ref{numerical} and $C_1$ is the same as in \eqref{kadirilumleyng}. If \eqref{suffcondition} is true, in particular Theorem \ref{th1} is sharper than \eqref{kadirilumleyng} for each of the intervals $\sigma\in[\alpha_0,1]$, $T\in(T_0,T_1]$, since in \eqref{kadirilumleyng} there is an additional positive term $C_2\log^2 T$, which does not appear in \eqref{suffcondition}.\\
To verify \eqref{suffcondition}, we use two different techniques depending on whether $T_1\le \exp(500)$.\\
For $T_1\le \exp(500)$, being the exponent $B>\frac{8}{3}$ (the values of $B$ can be found in Table \ref{valuesCB} in Appendix \ref{numerical}), we rewrite \eqref{suffcondition} as 
\begin{equation*}
   T^{(B-\frac{8}{3})(1-\sigma)}\le \frac{C_1}{C}\log^{5-2\sigma}T, 
\end{equation*}
and then we apply the logarithm, getting
\[
\left(B-\frac{8}{3}\right)(1-\sigma)\le \log\left(\frac{C_1}{C}\right)\cdot \frac{1}{\log T}+(5-2\sigma)\cdot\frac{\log\log T}{\log T}.
\]
The last inequality is equivalent to 
\begin{equation}\label{sigmacond}
    \sigma\ge \frac{1}{\left(B-\frac{8}{3}\right)-2\frac{\log\log T}{\log T}}\cdot \left(B-\frac{8}{3}-\log\left(\frac{C_1}{C}\right)\cdot \frac{1}{\log T}-5\frac{\log\log T}{\log T}\right).
\end{equation}
At this point, it remains to verify that, for every $T\in(T_0,T_1]$, with $(T_0,T_1]$ fixed, the RHS of \eqref{sigmacond} is always less than the $\alpha_0$ corresponding to that particular interval $(T_0,T_1]$. Simple numerical calculations show that \eqref{sigmacond} holds for every $T\in(T_0,T_1]$ and $\sigma\in[\alpha_0,1]$.\\
When $T_0>\exp(500)$, we observe that the exponent $B$ in \eqref{suffcondition} given by Theorem \ref{th1} in each of the intervals $(T_0,T_1]$ is smaller than $\frac{8}{3}$, and hence the factor $T^{B(1-\sigma)}$ is always smaller than $T^{\frac{8}{3}(1-\sigma)}$. Furthermore, for a fixed $\sigma$, the RHS in \eqref{suffcondition} is an increasing function in $T$ and the exponent of the log-factor is $5-2\sigma\ge 3$. Hence, we just need to verify that $C_1\log^3(T_0)\ge C$ in each interval $(T_0,T_1]$, where $C_1$ is the same as in \eqref{kadirilumleyng} and $C$ is given by Theorem \ref{th1}. Again, a straightforward calculation shows that this is always true in the intervals we are considering.\\
Another remark is that, when $T$ is sufficiently large, that is, $T\ge \exp(1000)$, the exponent $B$ is less than $2$ which is the exponent conjectured in the Density Hypothesis. This makes sense, since it has already been proved non-explicitly that when $\sigma$ is sufficiently close to $1$, we can get upper bounds for $N(\sigma,T)$ which are sharper than the one conjectured in the Density Hypothesis, as we already mentioned before.\\
To give an idea of the size of the improvements of Theorem \ref{th1} on \eqref{kadirilumleyng}, we just list a few cases, keeping $\sigma$ fixed and letting $T$ vary or vice versa.
\begin{table}[h]
\def\arraystretch{1.3}
\centering
\caption{Comparison between Theorem \ref{th1} and estimate \eqref{kadirilumleyng} }
\begin{tabular}{|c|c|c|}
\hline
$\sigma$ & $T$  & Improvement \\
\hline
$0.9930$ &$10^{13}$  & $13.4\%$\\
\hline 
$0.9930$ &$\exp(46.2)$  & $58.4\%$\\
\hline 
$0.9930$ &$\exp(170.2)$  & $97.4\%$\\
\hline 
$0.9900$ &$\exp(46.2)$  & $4.1\%$\\
\hline 
$0.9900$ &$\exp(170.2)$  & $96.2\%$\\
\hline 
$0.9850$ &$\exp(90)$  & $16.2\%$\\
\hline 
$0.9900$ &$\exp(90)$  & $72.6\%$\\
\hline 
$0.9930$ &$\exp(90)$  & $86.0\%$\\
\hline 
\end{tabular}
\label{ex1}
\end{table}
\section{Background and useful lemmas}\label{background} We list some results that will be used throughout the proof of Theorem \ref{th1}. As we already mentioned in the previous section, since the Riemann Hypothesis has been verified up to height $3\cdot 10^{12}$ by Platt and Trudgian \cite{platt_riemann_2021}, we can restrict to $T> 3\cdot 10^{12}$.
\subsection{Zeros and bounds of $\zeta(s)$}
\begin{comment}
    We start recalling some explicit results on the number of zeros of $\zeta(s)$ inside a finite region of the critical strip. Let $N(T)$ be the number of zeros $\rho=\beta+i\gamma$ of $\zeta(s)$ with $0\le \gamma\le T$.
\begin{lemma}[\cite{TRUDGIAN2014280} Corollary 1]\label{thnt}
 If $T \geq  e$, then
$$
\left|N(T)-\frac{T}{2 \pi} \log \frac{T}{2 \pi e}-\frac{7}{8}\right| \leq  0.112 \log T+0.278 \log \log T+2.510+\frac{0.2}{T}.
$$
\end{lemma}
When $T$ is sufficiently large, the following estimate for $N(T)$ becomes sharper than Lemma \ref{thnt}.
\begin{lemma}[\cite{HASANALIZADE2022219} Corollary 1.2]
  For any $T\ge e$ we have  \begin{equation*}
\left|N(T)-\frac{T}{2 \pi} \log \left(\frac{T}{2 \pi e}\right)\right| \leq 0.1038 \log T+0.2573 \log \log T+9.3675.
\end{equation*}
\end{lemma}
However, we will use Lemma \ref{thnt} for the proof of Theorem \ref{th1}, since we are mainly interested in estimates for $N(\sigma,T)$ when $T$ is relatively small, that is for a finite range of $T$ just above Riemann height.
\end{comment}
We start recalling some important results about the number of zeros inside a circle centered at a point of the critical strip close to the $1$-line.
\begin{lemma}\label{lemmangen}
  Let $\sigma \geq 1,0<r \leq 1, T\ge 3\cdot 10^{12} , t \in \mathbb{R},3\cdot 10^{12}\le|t| \leq T$. Define
$$
n(r, \sigma+i t):=\#\{\rho=\beta+i \gamma: 0<\beta<1, \zeta(\rho)=0,|\sigma+i t-\rho| \leq r\} .
$$
The bound $$n(r, \sigma+i t) \leq n(r, 1+i t) \leq n(2 r, 1+r+i t) \leq r(2 \log ( T)-1)+4$$ holds.
\end{lemma}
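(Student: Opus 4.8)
The plan is to handle the three inequalities separately, the first two being elementary geometric facts and the third carrying all the content. For the first inequality I would argue by inclusion of discs: if $\rho=\beta+i\gamma$ is a zero counted by $n(r,\sigma+it)$, so $0<\beta<1$ and $\sigma\ge 1$, then $\sigma-\beta\ge 1-\beta>0$, hence $|\sigma+it-\rho|^2=(\sigma-\beta)^2+(t-\gamma)^2\ge(1-\beta)^2+(t-\gamma)^2=|1+it-\rho|^2$, so $\rho$ is also counted by $n(r,1+it)$. For the second, if $|1+it-\rho|\le r$ then $|1+r+it-\rho|\le r+|1+it-\rho|\le 2r$, so the disc of radius $r$ about $1+it$ sits inside the disc of radius $2r$ about $s_0:=1+r+it$. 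The reason for moving the centre to $s_0$ is that there $\operatorname{Re}(s_0)-\beta>r$ for every zero $\rho$ with $0<\beta<1$, which is exactly the positivity the main step needs (at $1+it$ the quantity $1-\beta$ is positive but not bounded below).

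For the third inequality I would invoke the Hadamard partial-fraction formula
\[
\frac{\zeta'}{\zeta}(s)=b-\frac{1}{s-1}+\tfrac12\log\pi-\tfrac12\frac{\Gamma'}{\Gamma}\!\left(\tfrac{s}{2}+1\right)+\sum_\rho\left(\frac{1}{s-\rho}+\frac{1}{\rho}\right),
\]
legitimate at $s=s_0$ since $\operatorname{Re}(s_0)=1+r>1$. Taking real parts at $s=s_0$ and using the classical relation $\operatorname{Re} b=-\sum_\rho\operatorname{Re}\rho^{-1}$ to cancel the $b$ and $\sum\rho^{-1}$ contributions leaves
\[
\sum_\rho\operatorname{Re}\frac{1}{s_0-\rho}=\operatorname{Re}\frac{\zeta'}{\zeta}(s_0)+\operatorname{Re}\frac{1}{s_0-1}-\tfrac12\log\pi+\tfrac12\operatorname{Re}\frac{\Gamma'}{\Gamma}\!\left(\tfrac{s_0}{2}+1\right),
\]
an absolutely convergent sum of positive terms on the left, since $\operatorname{Re}\frac{1}{s_0-\rho}=\frac{1+r-\beta}{|s_0-\rho|^2}>0$. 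Moreover, for a zero with $|s_0-\rho|\le 2r$ this term is at least $\frac{r}{(2r)^2}=\frac{1}{4r}$, using $1+r-\beta>r$; discarding the remaining positive terms therefore gives $\tfrac{1}{4r}\,n(2r,s_0)\le(\text{RHS})$.

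It then remains to bound the right-hand side explicitly. I would use $\operatorname{Re}\frac{\zeta'}{\zeta}(s_0)\le-\frac{\zeta'}{\zeta}(1+r)$ (the Dirichlet series of $-\zeta'/\zeta$ has nonnegative coefficients), together with $-\frac{\zeta'}{\zeta}(1+r)<\frac1r$, which follows from $-\zeta'(1+r)<\int_1^\infty x^{-1-r}\log x\,dx=r^{-2}$ and $\zeta(1+r)>\int_1^\infty x^{-1-r}\,dx=r^{-1}$; the pole term $\operatorname{Re}\frac{1}{s_0-1}=\frac{r}{r^2+t^2}$ is $O(t^{-2})$, hence negligible since $|t|\ge 3\cdot 10^{12}$; and Stirling's expansion for the digamma function gives $\operatorname{Re}\frac{\Gamma'}{\Gamma}(z)\le\log|z|+O(|z|^{-2})$, so $\tfrac12\operatorname{Re}\frac{\Gamma'}{\Gamma}(\tfrac{s_0}{2}+1)\le\tfrac12\log\frac{|t|}{2}+O(t^{-2})$. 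Assembling these, $\tfrac{1}{4r}n(2r,s_0)<\frac1r+\tfrac12\log|t|-\tfrac12\log(2\pi)+o(1)$; multiplying through by $4r$ and using $|t|\le T$ yields $n(2r,s_0)<4+2r\log T-2r\log(2\pi)+o(r)$, which is $\le r(2\log T-1)+4$ because $2\log(2\pi)>1$.

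I expect the main obstacle to be the explicit bookkeeping in the last step rather than any conceptual difficulty: one must pin down the $O(t^{-2})$ terms coming from Stirling and from the pole, verify $-\zeta'/\zeta(1+r)<r^{-1}$ uniformly for $0<r\le 1$, and check that after multiplying by $4r$ every leftover $O(r)$ error is absorbed into the slack $2\log(2\pi)-1\approx 2.68$ in the coefficient of $r$, so that the stated constants $2\log T-1$ and $4$ are actually attained. An alternative would be to run Jensen's formula for the entire function $(s-1)\zeta(s)$ on a disc about $s_0$ of radius slightly exceeding $2r$, but reaching the coefficient $2$ in front of $\log T$ that way appears to require a subconvexity-type bound for $\zeta$ just to the left of the $1$-line and is harder to render explicit, so I would prefer the partial-fraction route above.
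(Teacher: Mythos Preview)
Your proposal is correct and gives the standard classical argument. The paper itself does not reprove this lemma at all: its entire proof is the one-line citation ``See Lemma~2.9 of \cite{thorner2023explicit} with $q=1$'', so what you have written is essentially a self-contained unpacking of what that reference does in the special case of $\zeta$. The two geometric inclusions are handled exactly as you say, and the substantive third inequality is obtained (there as here) from the explicit partial-fraction formula for $\zeta'/\zeta$ at $s_0=1+r+it$, positivity of $\operatorname{Re}\frac{1}{s_0-\rho}$, and a Stirling bound on the digamma term; the slack $2\log(2\pi)-1\approx 2.68$ in the coefficient of $r$ indeed absorbs all the explicit $O(t^{-2})$ debris for $|t|\ge 3\cdot10^{12}$.

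One small point to tighten: your justification of $-\zeta'(1+r)<r^{-2}$ by comparing $\sum_{n\ge 2}\frac{\log n}{n^{1+r}}$ with $\int_1^\infty \frac{\log x}{x^{1+r}}\,dx$ is not immediate, since $x\mapsto(\log x)x^{-1-r}$ is not monotone on $[1,\infty)$. The inequality you actually need, $-\tfrac{\zeta'}{\zeta}(1+r)<\tfrac{1}{r}$ for $0<r\le 1$, is true and standard (for instance via the Laurent expansion $-\tfrac{\zeta'}{\zeta}(1+r)=\tfrac{1}{r}-\gamma+O(r)$ together with a short check on $[\varepsilon,1]$, or by showing $(\sigma-1)\zeta(\sigma)$ is increasing on $(1,\infty)$), so you can simply invoke it directly rather than routing through separate bounds on $-\zeta'$ and $\zeta$.
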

\begin{proof}
    See  Lemma 2.9 of \cite{thorner2023explicit} with $q=1$.
\end{proof}
\begin{lemma}\label{pr211q1}
Let $3\cdot 10^{12}\le |t| \leq T$, and $0<r<\frac{1}{2}$.  The following bound holds
$$
\begin{aligned}
\sum_{\substack{\zeta(\rho)=0 \\
|1+i t-\rho| \leq r}} \operatorname{Re}\left(\frac{1}{1+r+i t-\rho}\right) & \leq \frac{\log ( T)+4.7098}{4+8 r}+\frac{\left(\frac{4}{\pi}-1\right) \log \left(1+r^{-1}\right)}{1+2 r}+2.6908 \\
& +\frac{8 r}{(1+2 r)^2}(r(2 \log ( T)-1)+4)+r^{-1}.
\end{aligned}
$$
\end{lemma}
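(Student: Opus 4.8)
The plan is to estimate the sum on the left by relating it to a contour-integral / logarithmic-derivative expression for $\zeta$ near $1+it$, and then to split the resulting quantity into a piece controlled by the Borel--Carathéodory / Jensen-type bound already packaged in Lemma \ref{lemmangen} and a piece controlled by the standard explicit bound for $\operatorname{Re}\,\zeta'/\zeta$ on a vertical line slightly to the right of $\sigma=1$. Concretely, I would start from the Hadamard factorization of $\zeta(s)$, which gives, for $s=1+r+it$,
\[
\operatorname{Re}\frac{\zeta'}{\zeta}(s)=B_1-\operatorname{Re}\frac{1}{s-1}-\frac12\operatorname{Re}\frac{\Gamma'}{\Gamma}\!\left(\tfrac{s}{2}+1\right)+\sum_{\rho}\operatorname{Re}\frac{1}{s-\rho},
\]
and every term $\operatorname{Re}\frac{1}{s-\rho}$ is nonnegative because $\operatorname{Re}(s)=1+r>1\ge\operatorname{Re}(\rho)$. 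Hence the full sum over all zeros dominates the sub-sum over the disc $|1+it-\rho|\le r$, and I get
\[
\sum_{\substack{\zeta(\rho)=0\\ |1+it-\rho|\le r}}\operatorname{Re}\frac{1}{1+r+it-\rho}\ \le\ \operatorname{Re}\frac{\zeta'}{\zeta}(1+r+it)+\operatorname{Re}\frac{1}{r+it}+\frac12\operatorname{Re}\frac{\Gamma'}{\Gamma}\!\left(\tfrac{1+r+it}{2}+1\right)-B_1 .
\]

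Next I would insert the known explicit ingredients. For the $\Gamma$-term, Stirling gives $\operatorname{Re}\frac{\Gamma'}{\Gamma}\!\left(\frac{s}{2}+1\right)\le \log|t| + O(1)$ with an explicit constant, and in the regime $|t|\ge 3\cdot10^{12}\ge$ (huge) one can replace $\log|t|$ by $\log T$ up to a tiny error, which explains the $\frac12\log T$ hidden in the $\tfrac{\log T+4.7098}{4+8r}$-shaped main term once one tracks how the $r$-dependence enters (the denominators $4+8r$, $1+2r$, $(1+2r)^2$ strongly suggest the author is really bounding $\operatorname{Re}\frac{1}{1+r+it-\rho}$ by $\frac{1}{(1+2r)}\cdot\frac{\text{something}}{1+2r}$ after writing $1+r+it-\rho=(1+2r)+( \text{small})$ when $|1+it-\rho|\le r$, i.e. a direct pointwise bound rather than the crude Hadamard route). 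For the $\zeta'/\zeta$-term on the line $\operatorname{Re}(s)=1+r$ with $r<1/2$, I would use the elementary Dirichlet-series bound $\big|\frac{\zeta'}{\zeta}(1+r+it)\big|\le \big|\frac{\zeta'}{\zeta}(1+r)\big|\le \frac{1}{r}+\big(\frac{4}{\pi}-1\big)\log(1+r^{-1})+C$ — the appearance of exactly $\frac{4}{\pi}-1$ and $\log(1+r^{-1})$ in the statement is the fingerprint of the standard trick of bounding $\sum_n \Lambda(n) n^{-1-r}$ by comparing with $\log\zeta(1+r)\sim\log(1/r)$ and handling the sharper constant via the Vinogradov--Korobov-free elementary estimate; the $r^{-1}$ term in the conclusion is precisely $\operatorname{Re}\frac{1}{r+it}\le \frac1r$ coming from subtracting off the pole. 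The remaining absolute constant $2.6908$ is then the bookkeeping total of $-B_1$, the Euler-constant contribution, and the Stirling remainder, and the term $\frac{8r}{(1+2r)^2}(r(2\log T-1)+4)$ is what one picks up when one does NOT just bound each $\operatorname{Re}\frac{1}{1+r+it-\rho}$ pointwise but instead writes $\operatorname{Re}\frac{1}{1+r+it-\rho}=\frac{1}{1+2r}-\big(\frac{1}{1+2r}-\operatorname{Re}\frac{1}{1+r+it-\rho}\big)$ and bounds the correction using $|1+it-\rho|\le r$ together with the zero-counting bound $n(r,1+it)\le r(2\log T-1)+4$ from Lemma \ref{lemmangen}. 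So the overall skeleton is: (i) split each summand as main term $\frac{1}{1+2r}$ plus an error, (ii) bound the number of summands by Lemma \ref{lemmangen} to turn $\sum\frac{1}{1+2r}$ into $\frac{r(2\log T-1)+4}{1+2r}$ — wait, that overshoots; more likely (i') bound each summand by $\frac{1}{1+2r}$ minus a genuinely negative correction is wrong since the correction can have either sign, so the honest route is to bound $\operatorname{Re}\frac{1}{1+r+it-\rho}\le \frac{1+2r}{(1+2r)^2-r^2}$ pointwise? That gives $\frac{1}{1+r}$, still not matching. The cleanest reconciliation is the Hadamard route above with the $\Gamma$ and pole terms extracted explicitly and the constant $4.7098$ absorbing the difference between $\log|t|$ and $\log T$ plus Stirling; I would therefore commit to that route.

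Carrying it out: I would (1) write down the Hadamard sum identity for $\operatorname{Re}\frac{\zeta'}{\zeta}(1+r+it)$, (2) use positivity of $\operatorname{Re}\frac{1}{s-\rho}$ to drop all zeros outside the disc, keeping $\sum_{|1+it-\rho|\le r}\operatorname{Re}\frac{1}{1+r+it-\rho}$ on the left, (3) substitute the explicit bound $\operatorname{Re}\frac{\zeta'}{\zeta}(1+r+it)\le \frac{1}{r}+(\tfrac4\pi-1)\log(1+r^{-1})+c_0$ (citing the relevant explicit lemma, e.g. from the same Thorner--Zaman paper or Trudgian's explicit $\zeta'/\zeta$ bounds), (4) substitute the explicit Stirling bound for $\frac12\operatorname{Re}\frac{\Gamma'}{\Gamma}(\tfrac{1+r+it}{2}+1)\le \frac12\log\big(\tfrac{|t|}{2}\big)+c_1\le \tfrac12\log T + c_1'$ using $|t|\le T$ and $|t|\ge 3\cdot10^{12}$ to control lower-order terms, (5) substitute $\operatorname{Re}\frac{1}{r+it}\le \frac1r$ and $-B_1\le c_2$ with the known value of $B_1=\frac{\gamma}{2}+1-\frac12\log(4\pi)$, and (6) finally massage the $r$-dependence: distribute the $\frac12\log T$ over the factor $\frac{1}{4+8r}\cdot\big(2\log T+\ldots\big)$ by recognizing $\frac{1}{4+8r}=\frac{1}{4(1+2r)}$ and collect the $r(2\log T-1)+4$ contribution from the disc-localized zeros, which is where the term $\frac{8r}{(1+2r)^2}(r(2\log T-1)+4)$ is generated when one improves the trivial pointwise bound $\operatorname{Re}\frac{1}{1+r+it-\rho}\le\frac{1}{1+r}$ to $\le \frac{1}{1+2r}+\frac{\text{correction}}{(1+2r)^2}$ and then sums the correction over the at most $r(2\log T-1)+4$ zeros in the disc.

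The main obstacle I anticipate is step (6) — getting the precise $r$-dependence of the three fractions $\frac{1}{4+8r}$, $\frac{1}{1+2r}$, $\frac{8r}{(1+2r)^2}$ to come out exactly, which requires care in how one Taylor-expands $\operatorname{Re}\frac{1}{1+r+it-\rho}$ around the "center" value $\frac{1}{1+2r}$ (writing $1+r+it-\rho = (1+2r) + (r+it-\rho)$ with $|r+it-\rho-r|=|it-\rho|$... actually $|1+it-\rho|\le r$ means $\rho = 1+it - w$ with $|w|\le r$, so $1+r+it-\rho = 1+r+it-(1+it-w) = r+w$, hence $\operatorname{Re}\frac{1}{1+r+it-\rho}=\operatorname{Re}\frac{1}{r+w}$ with $|w|\le r$, and $\operatorname{Re}\frac{1}{r+w}\le\frac{1}{r-|w|}$ is too weak near $|w|=r$, so one instead writes $r+w$ with $\operatorname{Re}(r+w)\ge 0$ and uses $\operatorname{Re}\frac{1}{r+w}=\frac{\operatorname{Re}(r+w)}{|r+w|^2}$ — this is exactly where the localized zero-count must be invoked to keep the sum finite, since individual terms are not bounded). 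This pointwise analysis near the boundary of the disc, combined with the global count from Lemma \ref{lemmangen}, is the delicate part; the rest is routine substitution of already-explicit estimates for $\zeta'/\zeta$, $\Gamma'/\Gamma$, and $B_1$, with $|t|\ge 3\cdot 10^{12}$ used throughout to make all lower-order $\log\log$ and $O(1/\log t)$ terms numerically negligible and thereby collapse into the stated constants $4.7098$ and $2.6908$.
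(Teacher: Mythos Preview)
The paper's own proof is a one-line citation: it simply invokes Proposition~2.11 of Thorner--Zaman \cite{thorner2023explicit} with $q=1$. No argument is carried out in the paper itself.

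Your proposal, by contrast, attempts to reconstruct the underlying Thorner--Zaman proof from scratch via the Hadamard product, positivity of $\operatorname{Re}\frac{1}{s-\rho}$, explicit Stirling for $\Gamma'/\Gamma$, and the zero-count of Lemma~\ref{lemmangen}. Those are indeed the standard ingredients behind results of this shape, so the high-level skeleton is sound. However, your write-up does not actually execute the argument: you cycle through several incompatible guesses for how the denominators $4+8r$, $1+2r$, $(1+2r)^2$ arise, explicitly retract one attempt (``wait, that overshoots''), and ultimately concede that step~(6) --- obtaining the precise $r$-dependence --- is the ``main obstacle'' you have not resolved. In particular, your claimed inequality $\bigl|\tfrac{\zeta'}{\zeta}(1+r+it)\bigr|\le \tfrac{1}{r}+(\tfrac{4}{\pi}-1)\log(1+r^{-1})+c_0$ is not a standard elementary Dirichlet-series bound (the constant $\tfrac{4}{\pi}-1$ does not come from $\sum\Lambda(n)n^{-1-r}$), and your attempt to extract $\tfrac{1}{4+8r}$ from a bare $\tfrac12\log T$ term does not balance. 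The specific structure of the stated bound in fact reflects a more delicate argument in \cite{thorner2023explicit} (a careful splitting of the sum over zeros into near and far parts relative to the disc, with the $\tfrac{4}{\pi}-1$ arising from a geometric/angular estimate, not from $\zeta'/\zeta$ on a vertical line).

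So: for the purposes of this paper, the correct ``proof'' is the citation. If you wish instead to reprove the cited proposition, you would need to consult \cite{thorner2023explicit} directly rather than guess at the mechanism, since your current proposal identifies the right toolbox but does not assemble a complete or correct derivation of the stated inequality.
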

\begin{proof}
    Use Proposition 2.11 of \cite{thorner2023explicit} with $q=1$.
    \end{proof}
    An immediate corollary is the following result for the number of zeros close to a point of the $1$-line.
    \begin{corollary}\label{boundnear1torner}Using the same notations as in Lemma \ref{lemmangen}, the following estimate holds for $3\cdot 10^{12}\le |t| \leq T$:
        $$
\begin{aligned}
n(r, 1+i t) \leq 2 r\left(\frac{\log ( T)+4.7908}{4+8 r}+\frac{\left(\frac{4}{\pi}-1\right) \log \left(1+r^{-1}\right)}{1+2 r}+2.6908\right. \\
\left.\quad+\frac{8 r}{(1+2 r)^2}(r(2 \log (T)-1)+4)+r^{-1}\right) .
\end{aligned}
$$In particular, if $$\frac{1}{a\log T}\le r\le \frac{1}{b}$$ we get
\[
n(r, 1+i t)\le \frac{\log T}{b}\left(0.5+\frac{32}{b^2}\right)+\frac{0.573803}{b}\log b+\frac{2.4+5.3816}{b}+\frac{64}{b^2}+2-\frac{272}{b^3}.
\]
    \end{corollary}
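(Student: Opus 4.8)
The plan is to derive the second inequality of Corollary~\ref{boundnear1torner} directly from the first by substituting the hypothesis $\frac{1}{a\log T}\le r\le \frac{1}{b}$ and bounding each of the five terms inside the parentheses. First I would start from
\[
n(r,1+it)\le 2r\left(\frac{\log T+4.7908}{4+8r}+\frac{\left(\frac{4}{\pi}-1\right)\log(1+r^{-1})}{1+2r}+2.6908+\frac{8r}{(1+2r)^2}\bigl(r(2\log T-1)+4\bigr)+r^{-1}\right),
\]
and distribute the factor $2r$ across the sum, producing five pieces: a main term $\frac{2r(\log T+4.7908)}{4+8r}$, a logarithmic term $\frac{2r\left(\frac{4}{\pi}-1\right)\log(1+r^{-1})}{1+2r}$, a constant-type term $5.3816\,r$, a quadratic-in-$r$ term $\frac{16r^2}{(1+2r)^2}\bigl(r(2\log T-1)+4\bigr)$, and the term $2$ coming from $2r\cdot r^{-1}$.

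Next I would bound each piece using only $r\le 1/b$ together with $r>0$. For the main term, since $\frac{2r}{4+8r}=\frac{1}{2}\cdot\frac{4r}{4+8r}=\frac{1}{2}\cdot\frac{1}{1+1/(2r)}\le \frac{1}{2}\cdot\frac{1}{1+b/2}$... actually more simply $\frac{2r}{4+8r}\le \frac{2r}{8r}=\frac14$ is too lossy; instead I would use $\frac{2r}{4+8r}\le\frac{1}{4+8b^{-1}}\cdot\frac{2}{2}$ — the cleanest route is to write $\frac{2r}{4+8r}$ as an increasing function of $r$, so it is maximized at $r=1/b$, giving $\frac{2/b}{4+8/b}=\frac{2}{4b+8}=\frac{1}{2b+4}\le\frac{1}{2b}$, and then $\frac{\log T+4.7908}{4+8r}\cdot 2r$; one wants the stated coefficient $\frac{1}{b}\bigl(0.5+\frac{32}{b^2}\bigr)$ for $\log T$, so the $\frac{32}{b^2}$ part must come from combining the main-term contribution with the $2\log T$ inside the quadratic term. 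Indeed the quadratic term contributes $\frac{16r^2}{(1+2r)^2}\cdot 2r\log T\le 16r^3\cdot 2\log T\le \frac{32\log T}{b^3}$, wait — I should track this as $\frac{32 r^3\log T}{(1+2r)^2}$, bounded by $\frac{32\log T}{b^3}$; hmm, the target has $\frac{32}{b^2}$, so I'd instead bound $\frac{16r^2}{(1+2r)^2}\le 16r^2\le \frac{16}{b^2}$ and keep one factor $2r\le 2/b$ separate, or regroup so that $r(2\log T-1)$ pairs with $\frac{16r^2}{(1+2r)^2}$ to give $\le\frac{32r^3\log T}{b^2}\cdot$(something). The bookkeeping is the point: for each of the five terms I would (i) use monotonicity in $r$ or crude bounds like $(1+2r)^{-2}\le 1$, $(1+2r)^{-1}\le 1$, $\log(1+r^{-1})\le\log(1+a\log T)$ — no wait, for the logarithmic term I need $r^{-1}\ge b$ is the wrong direction; here I'd use $r\le 1/b$ so $r^{-1}\ge b$, giving $\log(1+r^{-1})$ potentially large, but it's multiplied by $2r\le 2/b$ and $\log(1+r^{-1})\le\log(2r^{-1})=\log 2+\log(1/r)$; since $r\ge\frac{1}{a\log T}$ one could bound $\log(1/r)\le\log(a\log T)$, but the target has only $\frac{0.573803}{b}\log b$, so in fact the intended bound must use $r\le 1/b$ to write $\log(1+r^{-1})\le\log(1+$something$)$ — this needs care, and I suspect the clean statement uses $\frac{2r}{1+2r}\left(\tfrac4\pi-1\right)\log(1+r^{-1})$ and the function $x\mapsto\frac{x}{1+x}\log(1+1/x)$ which is handled by showing it is decreasing, so its value at $r=1/b$ gives the $\frac{\log b}{b}$ shape with $\left(\tfrac4\pi-1\right)\approx 0.2732$ doubled... but $0.573803/2\approx 0.287$, not $0.273$, so there's an extra slack term folded in.

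The main obstacle, therefore, will be the logarithmic term: getting from $\frac{2r(4/\pi-1)\log(1+r^{-1})}{1+2r}$ to a bound of the shape $\frac{0.573803}{b}\log b + \frac{2.4}{b}$ requires analyzing the monotonicity of $g(r)=\frac{r}{1+2r}\log(1+r^{-1})$ on $(0,1/b]$ and splitting $\log(1+r^{-1})\le\log(r^{-1})+\log(1+r)\le\log(r^{-1})+r$, then using $\log(r^{-1})\le\log b$ only after confirming $g$ is increasing so the max is at $r=1/b$ — or if $g$ is decreasing, a different argument is needed and the constant $2.4/b$ absorbs the discrepancy. Once that term is controlled, the remaining four are routine: the constant term $5.3816\,r\le 5.3816/b$ combines with a piece from the quadratic term's constant ($\frac{16r^2}{(1+2r)^2}\cdot 8r\le\frac{64}{b^3}$, wait that gives $\frac{64}{b^3}$ which matches a term in the target but with the wrong sign — the $-\frac{272}{b^3}$ and $+\frac{64}{b^2}$ must come from exact evaluations at $r=1/b$ of $\frac{2r}{4+8r}$-type expressions rather than crude bounds, e.g. $\frac{2r}{4+8r}\big|_{r=1/b}=\frac{1}{2b+4}$ expanded as $\frac{1}{2b}-\frac{1}{b^2}+\cdots$, so negative correction terms appear naturally). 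I would finish by collecting the $\log T$ coefficients into $\frac1b(0.5+\frac{32}{b^2})$, the $\log b$ coefficient into $\frac{0.573803}{b}$, and all the $O(1)$-in-$r$ pieces into $\frac{2.4+5.3816}{b}+\frac{64}{b^2}+2-\frac{272}{b^3}$, verifying numerically that the crude bounds I used are dominated by the claimed constants — this last verification being exactly the kind of elementary computation the paper defers to its numerical appendix.
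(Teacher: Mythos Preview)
Your overall strategy --- distribute the factor $2r$ and bound each of the five resulting terms using $r\le 1/b$ --- is exactly the paper's approach. However, you overcomplicate the execution and your guess about the origin of $-272/b^3$ is wrong.

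The paper does not evaluate anything exactly at $r=1/b$ or Taylor-expand $\tfrac{2r}{4+8r}$. It simply throws away the $r$-dependence in every denominator via the crudest bounds $\tfrac{1}{4+8r}\le\tfrac14$, $\tfrac{1}{1+2r}\le 1$, $\tfrac{1}{(1+2r)^2}\le 1$. This collapses the right-hand side to a polynomial in $r$ plus an $r\log(1/r)$ term,
\[
r(0.5\log T+2.4)+0.573803\,r\log(1/r)+5.3816\,r+64r^2+r^3(32\log T-272)+2,
\]
after which one simply substitutes $r\le 1/b$ in each power of $r$, uses that $r\mapsto r\log(1/r)$ is increasing on $(0,1/e)$ to get $r\log(1/r)\le\frac{\log b}{b}$, and notes $32\log T-272\ge 0$ for $T\ge 3\cdot10^{12}$ so that the $r^3$ term is nonnegative and hence monotone.

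In particular, the $-272/b^3$ arises entirely from the coefficient of $r^3$ in the expansion of the fourth term $\frac{16r^2}{(1+2r)^2}(r(2\log T-1)+4)$, not from any Taylor expansion of $\tfrac{2r}{4+8r}$ as you conjecture; and your proposed monotonicity analysis of $g(r)=\frac{r}{1+2r}\log(1+r^{-1})$ is unnecessary, since the paper just uses $(1+2r)^{-1}\le 1$ and then the monotonicity of $r\log(1/r)$. The lower bound $r\ge\frac{1}{a\log T}$ plays no role in the derivation.
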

    \begin{proof}
    The first part follows from Lemma \ref{pr211q1}.
        If $$\frac{1}{a\log T}\le r\le \frac{1}{b}$$we get
        \begin{equation*}
            \begin{aligned}
&n(r, 1+i t) \leq 2 r\left(\frac{\log ( T)+4.7908}{4+8 r}+\frac{\left(\frac{4}{\pi}-1\right) \log \left(1+r^{-1}\right)}{1+2 r}+2.6908\right. \\&
\left.\quad+\frac{8 r}{(1+2 r)^2}(r(2 \log (T)-1)+4)+r^{-1}\right) \\&\le r(0.5\log T+2.4)+0.573803 r\log\left(\frac{1}{r}\right)+
5.3816r+64 r^2 + r^3 (32 \log T - 272)+2\\&\le r\left( \log T\left(0.5+\frac{32}{b^2}\right)+0.573803\log b+2.4+5.3816+\frac{64}{b}-\frac{272}{b^2}\right)+2\\&\le \frac{\log T}{b}\left(0.5+\frac{32}{b^2}\right)+\frac{0.573803}{b}\log b+\frac{2.4+5.3816}{b}+\frac{64}{b^2}+2-\frac{272}{b^3}, \end{aligned}
        \end{equation*}
        since $32\log T-272\ge 0$ for $T\ge 3\cdot 10^{12}$.
       
    \end{proof}
    Corollary \ref{boundnear1torner} can be used to determine the following result.
\begin{lemma}\label{numberzerosinintervalwithno2}
    Given $0.985\le \alpha_0\le\alpha\le 1$, the number of non trivial zeros for the Riemann zeta function $\zeta(s)$ inside the square \[
\alpha\le \sigma\le 1,\qquad |t-T|\le 1-\alpha,
\]
with $T\ge T_0$ fixed, is bounded by the following quantity depending on $\alpha$:
 \[\tilde{n}(\alpha)=b_1(1-\alpha)\log T+b_2,\]
 where the values of $b_1=b_1(\alpha_0,T_0)$ and $b_2=b_2(\alpha_0,T_0)$ can be found in Table \ref{alpha0b1b2} in Appendix \ref{numerical}.
\end{lemma}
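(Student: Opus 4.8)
The plan is to enclose the square in a single disc centred on the $1$-line and then apply Corollary~\ref{boundnear1torner}. Put $r:=\sqrt2\,(1-\alpha)$. Every point $\sigma+it$ with $\alpha\le\sigma\le1$ and $|t-T|\le1-\alpha$ satisfies
\[
|\,(1+iT)-(\sigma+it)\,|^2=(1-\sigma)^2+(t-T)^2\le 2(1-\alpha)^2=r^2,
\]
so the square lies inside the closed disc of radius $r$ about $1+iT$, and hence the number of non-trivial zeros of $\zeta$ in the square is at most $n(r,1+iT)$. Since $0.985\le\alpha_0\le\alpha\le1$ and $T\ge T_0\ge 3\cdot10^{12}$ we have $0\le r\le\sqrt2\,(1-\alpha_0)<\tfrac12$ and the centre $1+iT$ meets the hypotheses of Corollary~\ref{boundnear1torner} with $t=T$ (note that $|t|=T\le T$ holds with equality). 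If $\alpha=1$ then $r=0$ and $n(0,1+iT)=0$ because $\zeta(1+iT)\ne0$, so from now on $0<\alpha<1$.

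Applying the first bound of Corollary~\ref{boundnear1torner} (valid for all $0<r<\tfrac12$) and separating the terms that carry a factor $\log T$ from those that do not — using $4+8r\ge4$, $1+2r\ge1$, and $r(2\log T-1)+4=2r\log T+(4-r)$ with $0<4-r\le4$ — one obtains an estimate of the form
\[
n(r,1+iT)\le\Bigl(\tfrac{r}{2}+32r^3\Bigr)\log T+c_1 r+c_2\, r\log\!\bigl(1+r^{-1}\bigr)+c_3\,r^2+2
\]
with explicit positive absolute constants $c_1,c_2,c_3$. Substituting $r=\sqrt2\,(1-\alpha)$ into the bracket multiplying $\log T$ and using $(1-\alpha)^3\le(1-\alpha_0)^2(1-\alpha)$ bounds that term by $\bigl(\tfrac{\sqrt2}{2}+64\sqrt2\,(1-\alpha_0)^2\bigr)(1-\alpha)\log T$, which supplies the coefficient $b_1$. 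For the remaining terms one checks that $r\mapsto r$, $r\mapsto r\log(1+r^{-1})$ and $r\mapsto r^2$ are each increasing on $(0,\sqrt2\,(1-\alpha_0)]$ — the middle one because $\tfrac{d}{dr}\bigl(r\log(1+r^{-1})\bigr)=\log(1+r^{-1})-\tfrac1{1+r}>0$ — so their sum, together with the constant $2$, is at most its value at $r=\sqrt2\,(1-\alpha_0)$, an explicit constant; this supplies $b_2$. Collecting the two pieces gives $n(r,1+iT)\le b_1(1-\alpha)\log T+b_2$.

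The argument is essentially bookkeeping, and the only genuine choices are (i) checking that the hypotheses of the Thorner–Zaman results hold at the centre $1+iT$, where $|t|=T\le T$ holds with equality, and (ii) how to distribute the terms that are proportional to $(1-\alpha)$ or $(1-\alpha)^2$ but carry no $\log T$: each such term $c\,(1-\alpha)^k$ with $k\in\{1,2\}$ may be kept in $b_2$, or moved into the leading coefficient via $c\,(1-\alpha)^k\le c\,(1-\alpha_0)^{k-1}(1-\alpha)\le \tfrac{c\,(1-\alpha_0)^{k-1}}{\log T_0}(1-\alpha)\log T$, which is the origin of the dependence of $b_1,b_2$ on $T_0$. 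The precise split is the one optimised numerically in \cite{Bellotti_Log-free} and recorded in Table~\ref{alpha0b1b2}.
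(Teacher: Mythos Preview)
Your proof is correct and follows the same approach as the paper: enclose the square in the disc of radius $r=\sqrt2\,(1-\alpha)$ centred at $1+iT$ and invoke Corollary~\ref{boundnear1torner}. The paper's own proof is extremely terse (it simply states the containment and says ``the conclusion follows by an application of Corollary~\ref{boundnear1torner}''), whereas you have additionally spelled out how the resulting bound separates into a piece $b_1(1-\alpha)\log T$ and a piece $b_2$, and explained the origin of the dependence on $T_0$; this extra bookkeeping is correct and helpful, though your simplifications ($4+8r\ge4$, $(1+2r)^2\ge1$) give marginally larger constants than the numerically optimised values in Table~\ref{alpha0b1b2}.
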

\begin{proof}
First, we observe that the square
\[
\alpha\le \sigma\le 1,\qquad |t-T|\le 1-\alpha
\]
is contained inside the circle of center $1+iT$ and radius 
\[
r=\sqrt{2}(1-\alpha).
\]
Hence, the number of zeros inside the square is less than the number of zeros inside the circle in which the square is contained. The conclusion follows by an application of Corollary \ref{boundnear1torner} with $r=\sqrt{2}(1-\alpha)$ to $n(r,1+iT)$.

\begin{comment}
$$
\begin{aligned}
&n(r, 1+i t) \leq 2 r\left(\frac{\log\left(T+\sqrt{2}\cdot 0.013\right)+4.7908}{4+8 r}+\frac{\left(\frac{4}{\pi}-1\right) \log \left(1+r^{-1}\right)}{1+2 r}+2.6908\right. \\&
\left.\ \ +\frac{8 r}{(1+2 r)^2}(r(2 \log \left(T+\sqrt{2}\cdot 0.013\right)-1)+4)+r^{-1}\right)\\&\le r\frac{\log \left(T+\sqrt{2}\cdot0.013\right)}{2}+0.04403+0.03891+0.09893+2\\&\ \ +0.0101\cdot r\log\left(T+\sqrt{2}\cdot 0.013\right)-0.00503\cdot r+0.0202\\&\le 0.50993r\log T+2.2021\le 0.72116(1-\alpha)\log T+2.2021=\tilde{n}(\alpha).
\end{aligned}
$$
\end{comment}
\end{proof}
Other important tools for the proof of Theorem \ref{th1} are the bounds for the Riemann zeta function on the half line. 
\begin{lemma}\label{onehalf}
The following estimates hold:
    \begin{equation*}
\left|\zeta\left(\frac{1}{2}+i t\right)\right| \leq\left\{\begin{array}{ll}
1.461 & \text { if } 0 \leq|t| \leq 3 \\ \\
0.618|t|^{1 / 6} \log |t| & \text { if }3<|t| \le \exp(105)\\ \\
 66.7 t^{27 / 164}  & \text { if } |t| > \exp(105).
\end{array}\right.
\end{equation*}
\end{lemma}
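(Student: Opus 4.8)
The plan is to establish the lemma by assembling, for each of the three ranges of $|t|$, the sharpest currently available explicit pointwise bound for $|\zeta(\tfrac{1}{2}+it)|$, and then checking that each quoted estimate is in fact valid throughout the range in which it is used. No part of the statement requires a new idea, so the proof is essentially a careful compilation from the literature on explicit bounds for $\zeta$ on the critical line.

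\emph{The range $0\le|t|\le 3$.} Here I would use a rigorous numerical evaluation. Since the lowest zero of $\zeta$ on the critical line lies near $t=14.13$, the map $t\mapsto|\zeta(\tfrac{1}{2}+it)|$ is continuous and nonvanishing on $[0,3]$, and $|\zeta(\tfrac{1}{2})|=1.46035\ldots$; a verified interval-arithmetic computation on $[0,3]$ (or an appeal to a value already recorded in the explicit-estimates literature) then gives $|\zeta(\tfrac{1}{2}+it)|\le 1.461$ uniformly. This bound is deliberately crude — it is only a convenient uniform estimate near the real axis — so there is no need for it to match the other two bounds at $t=3$.

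\emph{The ranges $3<|t|\le\exp(105)$ and $|t|>\exp(105)$.} For the first of these I would invoke an explicit van der Corput--type estimate for $\zeta$ on the critical line, of the shape $|\zeta(\tfrac{1}{2}+it)|\le c\,|t|^{1/6}\log|t|$ valid for $|t|\ge 3$: Hiary's bound with $c=0.63$, in the sharpened form with $c=0.618$ obtained in the subsequent refinements. For $|t|>\exp(105)$ I would instead invoke an explicit sub-Weyl bound $|\zeta(\tfrac{1}{2}+it)|\le 66.7\,|t|^{27/164}$, in which the exponent $\tfrac{27}{164}<\tfrac{1}{6}$ comes from an explicit run of the exponent-pair / van der Corput process (an explicit, and more modest, analogue of Bourgain's $\tfrac{13}{84}$). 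The transition point $\exp(105)$ is then pinned down by requiring that the sub-Weyl estimate both holds and genuinely improves on the van der Corput one for all larger $t$, i.e.\ that $66.7\,|t|^{27/164}\le 0.618\,|t|^{1/6}\log|t|$ once $|t|>\exp(105)$.

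The main work — rather than any real obstacle — is threefold: (i) tracking down the references carrying the best presently known explicit constants for the two asymptotic regimes; (ii) carrying out or citing the rigorous computation on $[0,3]$; and (iii) checking the bookkeeping at the breakpoints, in particular that the three quoted estimates are each stated and valid on ranges that together exhaust all $|t|\ge 0$, and that $\exp(105)$ is a legitimate crossover between the $|t|^{1/6}\log|t|$ and $|t|^{27/164}$ regimes.
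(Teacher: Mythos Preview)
Your proposal is correct and matches the paper's approach: the lemma is not proved from scratch but assembled by citing the relevant explicit bounds from the literature, namely Hiary for $|t|\le 3$, Hiary--Patel--Yang for the $0.618\,|t|^{1/6}\log|t|$ bound, and Patel--Yang for the sub-Weyl bound $66.7\,|t|^{27/164}$. The paper in fact says nothing more than this, so your additional remarks on the numerical check near $t=0$ and on the crossover at $\exp(105)$ go slightly beyond what the paper records, but they are consistent with it.
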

In order of appearance, these estimates are due to Hiary \cite{hiary_explicit_2016}, Hiary--Patel--Yang \cite{hiary_improved_2024}, Patel--Yang \cite{patel2023explicit}.\\
 Finally, we list the currently best-known explicit zero-free regions for the Riemann zeta function. Although all of them hold for every $|T|\ge 3$, we indicate the range of values for $|T|$ for which each of them is the sharpest one.
\begin{itemize}
    \item For $3\cdot 10^{12}< |T|\le e^{46.2}$ the classical zero-free region \cite{mossinghoff2024explicit} is the widest one:
    \begin{equation}\label{classicalzerofree}
        \sigma\ge 1-\frac{1}{5.558691 \log |T|}=:1-\nu_1(|T|).
    \end{equation}
    \item For $ e^{46.2}<|T|\le e^{170.2}$ the currently largest known zero-free region is \begin{equation}\label{zerofreeintermediate}
\sigma>1-\frac{0.04962-0.0196 /(J(|T|)+1.15)}{J(|T|)+0.685+0.155 \log \log |T|}=:1-\nu_2(|T|),
\end{equation}
where $J(T):=\frac{1}{6} \log T+\log \log T+\log 0.618$. As per \cite{YANG2024128124}, this result is obtained by substituting Theorem 1.1 of \cite{hiary_improved_2024} in Theorem 3 of \cite{ford_zero_2022} and observing that $J(T)<\frac{1}{4} \log T+1.8521$ for $T\geq 3$.
\item For $e^{170.2}< |T|\le e^{481958}$, the sharpest zero-free region \cite{YANG2024128124} is the Littlewood one:
\begin{equation}\label{littlewoodzerofree}
    \sigma\ge 1-\frac{\log\log |T|}{21.233\log |T|}=:1-\nu_3(|T|).
\end{equation}
\item Finally, for $|T|> e^{481958}$, the Korobov--Vinogradov zero-free region \cite{BELLOTTI2024128249} becomes the largest one: \begin{equation}\label{kvzerofree}
     \sigma\ge  1-\frac{1}{53.989\log^{2/3}|T|(\log\log |T|)^{1/3}}=:1-\nu_4(|T|).
\end{equation}
\end{itemize}
\subsection{Arithmetic functions}Now, we recall some explicit results involving some arithmetic functions. In particular, estimates for the divisor function $d(n)$ and for the Möbius function $\mu(n)$ will play an important role in our argument. We will use the following notation: if $f=\mathcal{O}^*(g)$ then $|f|\le g$.
\begin{lemma}[\cite{CullyHugill2019TwoED}, Theorem 2]\label{michaela-tim}
For $x \geq 2$ we have
$$
\sum_{n \leq x} d^2(n)=D_1 x \log ^3 x+D_2 x \log ^2 x+D_3 x \log x+D_4 x+\mathcal{O}^*\left(9.73 x^{\frac{3}{4}} \log x+0.73 x^{\frac{1}{2}}\right)
$$
where
$$
D_1=\frac{1}{\pi^2}, \quad D_2=0.745 \ldots, \quad D_3=0.824 \ldots, \quad D_4=0.461 \ldots
$$
are exact constants. Furthermore, for $x \geq x_j$ we have
$$
\sum_{n \leq x} d^2(n) \leq K x \log ^3 x
$$
where one may take $\left\{K, x_j\right\}$ to be, among others, $\left\{\frac{1}{4}, 433\right\}$ or $\{1,7\}$.
\end{lemma}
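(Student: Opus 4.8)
The plan is to exploit the classical Dirichlet-series identity
\[
\sum_{n\ge 1}\frac{d^2(n)}{n^s}=\frac{\zeta(s)^4}{\zeta(2s)},\qquad \operatorname{Re}(s)>1,
\]
which follows by comparing Euler factors: since $d^2$ is multiplicative with $d^2(p^k)=(k+1)^2$ and $\sum_{k\ge 0}(k+1)^2x^k=(1+x)/(1-x)^3$, each local factor equals $(1+p^{-s})/(1-p^{-s})^3=(1-p^{-2s})/(1-p^{-s})^4$. Equivalently $d^2=d_4*g$ as a Dirichlet convolution, where $d_4$ is the four-fold divisor function and $g$ is supported on perfect squares with $g(m^2)=\mu(m)$, so that
\[
\sum_{n\le x}d^2(n)=\sum_{m\le\sqrt x}\mu(m)\,S_4\!\left(\frac{x}{m^2}\right),\qquad S_4(y):=\sum_{k\le y}d_4(k).
\]

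The first main step is to establish an explicit estimate $S_4(y)=y\,P_3(\log y)+\mathcal{O}^*\!\left(E_4(y)\right)$ with $P_3$ an explicit cubic polynomial — its coefficients read off from the Laurent expansion of $\zeta(s)^4$ at its pole of order four at $s=1$, so that the leading coefficient is $1/3!=1/6$ — and $E_4(y)$ of size $y^{3/4}\log y$ up to an explicit constant. I would obtain this by the Dirichlet hyperbola method applied to $d_4=d*d$: insert the explicit two-fold divisor estimate $\sum_{k\le y}d(k)=y\log y+(2\gamma-1)y+\mathcal{O}^*(c\sqrt y)$ for a suitable explicit $c$, split the outer summation variable at $\sqrt y$ and symmetrise; the propagated error is $\ll\sqrt y\sum_{a\le\sqrt y}d(a)a^{-1/2}\ll y^{3/4}\log y$. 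The bulk of the work here is the bookkeeping of the cross terms while keeping the accumulated error constant small.

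The second step substitutes this into the displayed sum over $m$. The main term becomes $x\sum_{m\le\sqrt x}\mu(m)m^{-2}P_3(\log x-2\log m)$; completing the sum to all $m\ge 1$ costs $\mathcal{O}^*$ of order $x^{1/2}$ (since $\sum_{m>\sqrt x}m^{-2}\ll x^{-1/2}$ and $P_3$ is a fixed cubic), and the completed sum evaluates — via $\sum_{m}\mu(m)m^{-2}=1/\zeta(2)=6/\pi^2$ together with the analogous constants attached to $\sum_{m}\mu(m)(\log m)^{j}m^{-2}$, i.e.\ values of $\zeta'(2),\zeta''(2),\zeta'''(2)$ — to $x$ times an explicit cubic in $\log x$. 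Its leading coefficient is $\tfrac16\cdot\tfrac6{\pi^2}=\tfrac1{\pi^2}=D_1$, and $D_2,D_3,D_4$ drop out of the same computation. The $S_4$-error term, summed against $\sum_{m\le\sqrt x}m^{-3/2}\log(x/m^2)\ll\log x$, contributes $\ll x^{3/4}\log x$; collecting all pieces and optimising the numerical constants over the ranges where each tail bound is applied yields $\mathcal{O}^*\!\left(9.73\,x^{3/4}\log x+0.73\,x^{1/2}\right)$ for $x\ge 2$, with the small-$x$ range checked by direct computation.

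The \emph{furthermore} clause is then immediate: once $\sum_{n\le x}d^2(n)$ is sandwiched between the explicit cubic $x$-polynomial plus the explicit error, one verifies $D_1x\log^3x+D_2x\log^2x+D_3x\log x+D_4x+9.73\,x^{3/4}\log x+0.73\,x^{1/2}\le Kx\log^3x$ for $x\ge x_j$ by checking that the relevant ratio is eventually monotone and bounded by $K$, and handles the finitely many smaller $x$ by direct summation; the pairs $\{1/4,433\}$ and $\{1,7\}$ are two convenient such choices. The main obstacle throughout is the explicit hyperbola-method estimate for $S_4$ with constants sharp enough to keep the final error constants as small as $9.73$ and $0.73$ — delicate elementary bookkeeping rather than a conceptual difficulty — so in the present paper the whole lemma is simply quoted from \cite{CullyHugill2019TwoED}, where this is carried out.
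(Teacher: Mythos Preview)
The paper does not prove this lemma at all; it is stated with the citation \cite{CullyHugill2019TwoED} and used as a black box. You correctly note this in your final sentence, and the sketch you give (the identity $\sum d^2(n)n^{-s}=\zeta(s)^4/\zeta(2s)$, the convolution $d^2=d_4*g$, an explicit hyperbola estimate for $S_4$, then summing over $m$) is indeed the standard route and matches what is done in the cited reference.
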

Lemma \ref{michaela-tim} will be used in two different cases, when $x=VW=T^{v+w}$ and when $x=W^2=T^{2w}$, where $v,w$ are parameters we will define in Section \ref{zerodetector} and whose values depend on the range for $T$, $T_0\le T\le T_1$, in which we are working. Throughout the paper, we will use the following notation:
\begin{equation*}
    \sum_{n \leq x} d^2(n) \leq d_1 x \log ^3 x,
\end{equation*}
where $d_1=d_1(v,w,T_0,T_1)$ is equal to $d_{1,1}$ if $x=VW$, while $d_1=d_{1,2}$ if $x=W^2$. The numerical values for $d_{1,1}$ and $d_{1,2}$ can be found in Table \ref{dconstants} in Appendix \ref{numerical} .
By Lemma \ref{michaela-tim}, we can easily deduce the following result.
\begin{lemma}\label{d2overn}
    For every fixed $x$, the following estimate holds:
    \[
    \sum_{n\le x}\frac{d^2(n)}{n}\le d_2\log^4X,
    \]
    where $d_2=d_2(v,w,T_0,T_1)$ is equal to $d_{2,1}$ if $x=VW$ or $d_{2,2}$ if $x=W^2$. The numerical values can be found in Table \ref{dconstants} in Appendix \ref{numerical}. 
\end{lemma}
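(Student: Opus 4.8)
The plan is to deduce Lemma~\ref{d2overn} from Lemma~\ref{michaela-tim} by a routine partial summation (Abel summation) argument, using the already-established bound $\sum_{n\le x}d^2(n)\le d_1 x\log^3 x$ for $x$ in the relevant range. Write $S(t):=\sum_{n\le t}d^2(n)$. Then for $x\ge 2$,
\[
\sum_{n\le x}\frac{d^2(n)}{n}=\frac{S(x)}{x}+\int_2^x\frac{S(t)}{t^2}\,dt+\frac{d^2(1)}{1}\cdot\Big(1-\frac{1}{2}\Big)\cdot(\text{boundary correction}),
\]
or, more cleanly, $\sum_{n\le x}\frac{d^2(n)}{n}=\frac{S(x)}{x}+\int_{1}^{x}\frac{S(t)}{t^2}\,dt$ once one is careful about the lower endpoint. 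Substituting $S(t)\le d_1 t\log^3 t$ (valid once $t$ is past the threshold $x_j$ from Lemma~\ref{michaela-tim}, with the finitely many small terms absorbed into the constant), the first term contributes $\le d_1\log^3 x$ and the integral contributes $\le d_1\int_1^x\frac{\log^3 t}{t}\,dt=\frac{d_1}{4}\log^4 x$. Collecting the two pieces gives a bound of the shape $\le\big(\tfrac{d_1}{4}+o(1)\big)\log^4 x\le d_2\log^4 x$ for a suitable $d_2=d_2(v,w,T_0,T_1)$, once one notes that $\log^3 x\le\varepsilon\log^4 x$ for $x$ past a fixed point, and the two cases $x=VW$ and $x=W^2$ simply feed $d_{1,1}$ or $d_{1,2}$ into the computation to produce $d_{2,1}$ or $d_{2,2}$.

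In more detail, the steps I would carry out are: (i) fix the threshold $x_0$ beyond which the clean bound $S(t)\le d_1 t\log^3 t$ holds (e.g.\ $x_0=433$ with $d_1=\tfrac14$, or $x_0=7$ with $d_1=1$, per Lemma~\ref{michaela-tim}); (ii) split $\sum_{n\le x}d^2(n)/n$ as $\sum_{n\le x_0}d^2(n)/n+\sum_{x_0<n\le x}d^2(n)/n$, bounding the first (finite) sum by an explicit absolute constant; (iii) apply Abel summation to the tail, $\sum_{x_0<n\le x}\frac{d^2(n)}{n}=\frac{S(x)}{x}-\frac{S(x_0)}{x_0}+\int_{x_0}^{x}\frac{S(t)}{t^2}\,dt$, and insert $S(t)\le d_1 t\log^3 t$; (iv) evaluate $\int_{x_0}^x\frac{\log^3 t}{t}\,dt=\tfrac14(\log^4 x-\log^4 x_0)\le\tfrac14\log^4 x$; (v) combine everything, using that $X$ (the relevant size, namely $VW$ or $W^2$) is large enough in every interval $(T_0,T_1]$ under consideration that the lower-order terms $\log^3 X$, the constant from step (ii), and $-\tfrac{d_1}{4}\log^4 x_0$ are comfortably dominated, so that the whole expression is $\le d_2\log^4 X$. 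The numerical values of $d_{2,1}$ and $d_{2,2}$ recorded in Table~\ref{dconstants} are then obtained by running this computation with the corresponding $d_{1,1}$, $d_{1,2}$ and the smallest value of $X$ occurring in each range.

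There is essentially no genuine obstacle here: the only mild subtlety is bookkeeping at the lower endpoint of the partial summation and making sure the transition from a $\log^3$-type bound on $S(t)$ to a $\log^4$-type bound on the weighted sum does not lose the explicit constants one needs later. One must also check consistency of notation — the statement writes $\log^4 X$ while the hypothesis is ``for every fixed $x$'', so implicitly $X$ here is the same quantity as $x$ (one of $VW$, $W^2$), and the constant $d_2$ is allowed to depend on the parameters $v,w,T_0,T_1$ precisely so that the small-$n$ contribution and the $x_0$-boundary terms can be swept into it. Since every step is an equality or an elementary monotone estimate, the proof is short; I would simply present the Abel summation identity, substitute the bound from Lemma~\ref{michaela-tim}, integrate, and refer to Appendix~\ref{numerical} for the resulting constants.
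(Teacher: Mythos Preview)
Your proposal is correct and follows essentially the same route as the paper: partial summation applied to $\sum_{n\le X}d^2(n)/n$, insertion of the bound $\sum_{n\le t}d^2(n)\le d_1 t\log^3 t$ from Lemma~\ref{michaela-tim}, and evaluation of $\int \log^3 t/t\,dt=\tfrac14\log^4 t$, yielding $d_1\log^3 X+\tfrac{d_1}{4}\log^4 X\le d_2\log^4 X$. The paper's version is in fact slightly less careful than yours about the lower endpoint (it integrates from $1$ and applies the $d_1$-bound throughout), so your more cautious handling of the threshold $x_0$ is, if anything, an improvement in rigor.
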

\begin{proof}
By partial summation and Theorem \ref{michaela-tim} we get
    \begin{equation*}
        \begin{aligned}
            &\sum_{n\le X}\frac{d^2(n)}{n}=\frac{1}{X}\sum_{n\le X}d^2(n)+\int_1^X\left(\sum_{n\le u}d^2(n)\right)\frac{1}{u^{2}}\text{d}u\\&\le d_1\log^3X+\int_1^X\frac{d_1\log^3u}{u}\text{d}u\\&= d_1\log^3X+\frac{d_1}{4}\log^4X\le d_2\log^4X.
        \end{aligned}
    \end{equation*}
\end{proof}
\begin{lemma}[\cite{Ramare15} Corollary 1.10]\label{rammulog}
     For any real number $x \geq 1$ and any positive integer $q$, we have
$$
0\le \sum_{\substack{n \leq x,(n, q)=1}} \mu(n) \log (x / n) / n \leq 1.00303 \cdot q / \varphi(q).
$$
\end{lemma}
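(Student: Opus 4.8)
The plan is to separate the dependence on $q$ by an elementary Dirichlet convolution, then handle the case $q=1$ analytically.

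\textbf{Reducing to $q=1$.} Comparing Euler products,
\[
\sum_{(n,q)=1}\frac{\mu(n)}{n^{s}}
=\prod_{p\nmid q}\bigl(1-p^{-s}\bigr)
=\Bigl(\sum_{m\ge1}\frac{\mu(m)}{m^{s}}\Bigr)\Bigl(\sum_{\operatorname{rad}(k)\mid q}\frac{1}{k^{s}}\Bigr),
\]
so $n\mapsto\mu(n)\mathbf 1_{(n,q)=1}$ is the Dirichlet convolution of $\mu$ with the indicator of $\{k:\operatorname{rad}(k)\mid q\}$. Writing $\log(x/n)=\int_{n}^{x}\mathrm{d}t/t$ for $n\le x$, interchanging the finite sum with the integral, expanding the inner sum via this convolution, and substituting $t=ku$ in each summand, one obtains (with $g(y):=\sum_{n\le y}\mu(n)\log(y/n)/n$)
\[
\sum_{\substack{n\le x\\(n,q)=1}}\frac{\mu(n)}{n}\log\frac{x}{n}
=\int_{1}^{x}\frac{1}{t}\sum_{\substack{n\le t\\(n,q)=1}}\frac{\mu(n)}{n}\,\mathrm{d}t
=\sum_{\substack{\operatorname{rad}(k)\mid q\\ k\le x}}\frac{1}{k}\,g\!\Bigl(\frac{x}{k}\Bigr).
\]
Since $\sum_{\operatorname{rad}(k)\mid q}k^{-1}=\prod_{p\mid q}(1-1/p)^{-1}=q/\varphi(q)$, everything reduces to the case $q=1$: once we know
\[
0\le g(y)\le 1.00303\qquad(y\ge1),
\]
each summand in the last display is nonnegative and at most $1.00303/k$, which yields both the lower bound $0$ and the upper bound $1.00303\cdot q/\varphi(q)$.

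\textbf{The case $q=1$.} By Perron's formula,
\[
g(y)=\frac{1}{2\pi i}\int_{(c)}\frac{y^{s}}{s^{2}\zeta(1+s)}\,\mathrm{d}s\qquad(c>0).
\]
The integrand has a simple pole at $s=0$ (the double zero of $s^{2}$ meeting the simple pole of $\zeta(1+s)$) with residue $1$, so $g(y)\to1$ as $y\to\infty$; the constant $1.00303$ is that limit plus a small admissible slack. To make this quantitative, shift the contour to the left of $\mathrm{Re}(s)=0$. On the part of the new contour below height $3\cdot10^{12}$, the verification of the Riemann Hypothesis up to that height \cite{platt_riemann_2021} lets one push to $\mathrm{Re}(s)=-\tfrac{1}{2}+\varepsilon$, gaining a factor $y^{-1/2+\varepsilon}$; on the part above that height one stays just inside the explicit zero-free regions recorded in Section \ref{background} and uses an explicit upper bound for $1/|\zeta(1+s)|$ there, together with the decay $|s|^{-2}$. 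Adding the Perron truncation error, this gives $|g(y)-1|\le\eta(y)$ for an explicit $\eta(y)\to0$. Fixing $y_{0}$ with $\eta(y)\le0.00303$ for $y\ge y_{0}$ yields $0\le g(y)\le1.00303$ on $[y_{0},\infty)$; and on $[1,y_{0}]$ the same bounds are checked by a finite computation, since $g$ is continuous with $g(1)=0$ and is affine in $\log y$ on each interval $[N,N+1]$, with slopes and intercepts governed by the exactly known partial sums $\sum_{n\le N}\mu(n)/n$.

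\textbf{Main obstacle.} The structural steps above are routine; the difficulty is entirely quantitative. The trivial bound $\bigl|\sum_{n\le y}\mu(n)/n\bigr|\le1$ is hopeless here — it does not even keep $g(y)$ below $1.00303$ — so the contour shift must be executed sharply: one needs explicit control of $1/\zeta$ on and near the zero-free boundary at all heights, a careful treatment of the low-lying zeros under the verified Riemann Hypothesis, and an honest Perron truncation, all calibrated so that the crossover height $y_{0}$ stays small enough for the residual direct verification to be feasible. Optimizing this balance is precisely the content of \cite{Ramare15}, whose Corollary 1.10 is the packaged statement quoted here.
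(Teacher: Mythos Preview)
The paper does not supply a proof of this lemma: it is simply quoted as Corollary~1.10 of \cite{Ramare15}, so there is no argument in the paper to compare yours against.

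On its own merits, your sketch is sound. The reduction to $q=1$ via the convolution identity $\mu\cdot\mathbf{1}_{(\cdot,q)=1}=\mu*\mathbf{1}_{\operatorname{rad}(\cdot)\mid q}$ is correct, and the resulting decomposition
\[
\sum_{\substack{n\le x\\(n,q)=1}}\frac{\mu(n)}{n}\log\frac{x}{n}
=\sum_{\substack{\operatorname{rad}(k)\mid q\\k\le x}}\frac{1}{k}\,g\!\left(\frac{x}{k}\right)
\]
does reduce both inequalities to the single claim $0\le g(y)\le 1.00303$ for $y\ge1$, exactly as you say. For that claim, your Perron-plus-contour-shift outline is a legitimate strategy, and you are candid that the entire difficulty is quantitative: making the error terms small enough that the crossover $y_{0}$ leaves a feasible finite check. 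That is precisely the work carried out in \cite{Ramare15}. One minor anachronism: Ramar\'e's paper predates \cite{platt_riemann_2021}, so whatever numerical input he uses comes from an earlier verification of the Riemann Hypothesis; this does not affect the logic of your sketch.
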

Now, define
\begin{equation*}
c_0=\tilde{c}+\sum_{p \geq  2} \frac{\log p}{p(p-1)}=1.332582275733 \ldots,
\end{equation*}
where $\tilde{c}$ is Euler's constant.
\begin{lemma}[\cite{ramare_explicit_2019}, Theorem 3.1]\label{estwithphi}
For $D>1$, we have
$$
\sum_{d \leq  D} \frac{\mu^2(d)}{\varphi(d)}=\log D+c_0+\mathcal{O}^*\left(\frac{11}{\sqrt{D \log D}}\right),
$$
and equally for $D>1$, we have
$$
\sum_{d \leq  D} \frac{\mu^2(d)}{\varphi(d)}=\log D+c_0+\mathcal{O}^*\left(\frac{61 / 25}{\sqrt{D}}\right).
$$
\end{lemma}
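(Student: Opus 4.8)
This is Theorem~3.1 of \cite{ramare_explicit_2019}; the plan is to reconstruct its proof from the Euler product of the associated Dirichlet series. Write $g(n):=\mu^2(n)/\varphi(n)$ and factor
\[
\sum_{n\ge 1}\frac{g(n)}{n^{s}}=\prod_p\Bigl(1+\frac{1}{(p-1)p^{s}}\Bigr)=\zeta(s+1)\,G(s),\qquad
G(s):=\prod_p\Bigl(1+\frac{1}{p(p-1)}p^{-s}-\frac{1}{p(p-1)}p^{-2s}\Bigr);
\]
the second equality is a routine factor-by-factor check, and the product defining $G$ converges absolutely (so $G$ is analytic) for $\operatorname{Re}(s)>-\tfrac12$. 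Writing $G(s)=\sum_{e\ge1}c(e)e^{-s}$ — with $c$ multiplicative, supported on cubefree integers, and such that $\sum_{e}|c(e)|e^{\theta}<\infty$ for every $\theta<\tfrac12$ — and using $\zeta(s+1)=\sum_{d\ge1}d^{-1}d^{-s}$, one obtains the convolution identity $g(n)=\sum_{de=n}c(e)/d$.

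First I would sum over $n\le D$, swap the order of summation, and insert a sharp explicit harmonic-sum estimate $\sum_{d\le x}\tfrac1d=\log x+\tilde{c}+\mathcal{O}^*(r(x))$ with $r(x)\ll 1/x$:
\[
\sum_{n\le D}\frac{\mu^2(n)}{\varphi(n)}=\sum_{e\le D}c(e)\sum_{d\le D/e}\frac1d=\sum_{e\le D}c(e)\Bigl(\log\frac{D}{e}+\tilde{c}\Bigr)+E,\qquad E\ll\frac1D\sum_{e\le D}|c(e)|\,e.
\]
Completing the sum over $e$ to all of $\mathbb{N}$ (the cost being tails of $c$, controlled by $\sum_{e}|c(e)|e^{\theta}<\infty$), one uses $\sum_{e}c(e)=G(0)$ and $-\sum_{e}c(e)\log e=G'(0)$; a logarithmic differentiation of the Euler product supplies the two needed values, $G(0)=1$ and $G'(0)=\sum_{p}\frac{\log p}{p(p-1)}$. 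Collecting terms,
\[
\sum_{n\le D}\frac{\mu^2(n)}{\varphi(n)}=\log D\cdot G(0)+\tilde{c}\,G(0)+G'(0)+(\text{remainder})=\log D+\tilde{c}+\sum_{p\ge2}\frac{\log p}{p(p-1)}+(\text{remainder})=\log D+c_0+(\text{remainder}),
\]
which is the claimed shape of the main term.

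The remainder is $-\log D\sum_{e>D}c(e)+\sum_{e>D}c(e)\log e-\tilde{c}\sum_{e>D}c(e)$ together with the harmonic-sum error $E\ll\tfrac1D\sum_{e\le D}|c(e)|e$, and the only real obstacle is to render all of this explicit with the stated constants $11$ and $61/25$. For that I would split the Euler product of $G$ at a chosen point, bound $\sum_{e>D}|c(e)|$ and $\sum_{e>D}|c(e)|\log(e/D)$ by a Rankin-type argument with exponent $\theta\uparrow\tfrac12$ (refined by a partial summation that exploits the sign alternation of the $c(e)$, which improves the crude $\ll D^{-1/2}$ up to a log factor to $\ll(D\log D)^{-1/2}$), estimate the harmonic error by the same quantities, and balance the split point; a coarser choice yields the cleaner $\mathcal{O}^*((61/25)/\sqrt D)$ and a more careful one the sharper $\mathcal{O}^*(11/\sqrt{D\log D})$. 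I expect this final, fully explicit tail analysis of the coefficients of $G$ — landing exactly on those constants rather than on an unspecified $O(D^{-1/2})$ — to be the delicate part; it is carried out in detail in \cite{ramare_explicit_2019}.
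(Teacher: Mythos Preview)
The paper does not prove this lemma at all: it is simply quoted as Theorem~3.1 of \cite{ramare_explicit_2019} and used as a black box, so there is no ``paper's own proof'' to compare against. Your sketch is a faithful outline of the standard convolution argument behind such results --- factoring $\sum \mu^2(n)/(\varphi(n)n^s)=\zeta(s+1)G(s)$ with $G$ analytic on $\operatorname{Re}(s)>-\tfrac12$, swapping sums, inserting the explicit harmonic-sum formula, and reading off the constant $c_0=\tilde c+G'(0)$ --- and this is indeed the method Ramar\'e uses. Your computations $G(0)=1$ and $G'(0)=\sum_p\frac{\log p}{p(p-1)}$ are correct, and you rightly flag that the only nontrivial content is the explicit tail bound yielding the constants $11$ and $61/25$, which you defer to the cited paper just as the present paper does.
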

Finally, define
\begin{equation*}
G_q(D)=\sum_{\substack{d \leq D,(d, q)=1}} \frac{\mu^2(d)}{\varphi(d)}.
\end{equation*}
\begin{lemma}[\cite{ramare_explicit_2017}, Theorem 1.2.]\label{ramaregd} When $D \geq 1$, we have $$G_1(D)=\log D+c_0+\mathcal{O}^*(3.95 / \sqrt{D}).$$ When $D>1$, we have $$G_1(D)=\log D+c_0+\mathcal{O}^*(18.4 / \sqrt{D \log D}).$$
\end{lemma}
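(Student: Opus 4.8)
The statement is Theorem~1.2 of \cite{ramare_explicit_2017}, so in the body of the paper the proof is simply a pointer to that reference; what follows is a sketch of the argument one would give to make it self-contained. The first thing to record is that, since $(d,1)=1$ for every $d$, the sum $G_1(D)$ is \emph{literally} $\sum_{d\le D}\mu^2(d)/\varphi(d)$, the very quantity estimated in Lemma~\ref{estwithphi}; the point of Lemma~\ref{ramaregd} is only that it repackages the remainder in the two shapes $\mathcal{O}^*(3.95/\sqrt D)$ (valid already at $D\ge1$) and $\mathcal{O}^*(18.4/\sqrt{D\log D})$ (for $D>1$), which are the forms convenient for the zero-detector bookkeeping later in the paper. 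One could in principle deduce a version of this from Lemma~\ref{estwithphi}, but matching the sharper constants requires redoing the elementary estimate directly, as in Ramar\'e's paper.

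The natural route is the Dirichlet hyperbola method applied to a good convolution identity for the multiplicative function $n\mapsto\mu^2(n)/\varphi(n)$, whose Dirichlet series is $\prod_p\bigl(1+\tfrac{1}{(p-1)p^s}\bigr)$. Factoring out the pole at $s=0$, write $\prod_p\bigl(1+\tfrac{1}{(p-1)p^s}\bigr)=\zeta(s+1)\,G(s)$ with $G(s)=\prod_p(1-p^{-1-s})\bigl(1+\tfrac{1}{(p-1)p^s}\bigr)$; equivalently $\mu^2/\varphi=L*g$, where $L(n)=1/n$ and $g$ is the multiplicative function with $g(p)=\tfrac{1}{p(p-1)}$, $g(p^2)=-\tfrac{1}{p(p-1)}$ and $g(p^k)=0$ for $k\ge3$. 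Then $\sum_{n\le D}\mu^2(n)/\varphi(n)=\sum_{e\le D}g(e)\sum_{d\le D/e}1/d$, and inserting $\sum_{d\le x}1/d=\log x+\tilde c+\text{(small)}$ and extending the $e$-sum to infinity produces the main term $G(0)\log D+G'(0)+\tilde c\,G(0)$. Since $G(0)=1$ and a termwise logarithmic derivative gives $G'(0)=\sum_{p}\tfrac{\log p}{p(p-1)}$, this main term is exactly $\log D+\tilde c+\sum_{p\ge2}\tfrac{\log p}{p(p-1)}=\log D+c_0$, which pins down the constant in the statement.

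The remaining, and genuinely quantitative, task is to bound all the errors — the tail $\sum_{e>D}g(e)/e$ of the completed series, the analogous tail weighted by $\log e$, and the $\sum_{d\le x}1/d$--truncation errors summed against $g$ — by a constant multiple of $D^{-1/2}$, respectively $(D\log D)^{-1/2}$. This is feasible because $g$ is supported on integers all of whose prime exponents are at most $2$ and $|g(p^{a})|\le \tfrac{1}{p(p-1)}$, so the relevant tail sums are dominated by a rapidly convergent Euler product that can be summed over enough primes to extract an explicit constant; one then cross-checks a short initial range of $D$ numerically to legitimise the uniform validity down to $D\ge1$ in the first estimate. I expect the main obstacle to be precisely this last part: squeezing the constant down to a value as small as $3.95$ (and $18.4$), which is where Ramar\'e's careful optimisation does its work.
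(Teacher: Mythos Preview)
Your observation is correct: the paper gives no proof of this lemma, simply citing it from \cite{ramare_explicit_2017}, and your convolution sketch is a sound account of the underlying argument, with the identification $G(0)=1$ and $G'(0)=\sum_p \tfrac{\log p}{p(p-1)}$ correctly producing the constant $c_0$.

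One small correction: you say that matching the constants requires redoing the estimate rather than deducing it from Lemma~\ref{estwithphi}, but in fact the constants in Lemma~\ref{estwithphi} are \emph{sharper} than those in Lemma~\ref{ramaregd} (note $61/25=2.44<3.95$ and $11<18.4$), so Lemma~\ref{ramaregd} follows immediately from Lemma~\ref{estwithphi}. The two lemmas estimate literally the same sum; the paper appears to quote both references and uses the weaker $3.95/\sqrt{W}$ in Lemma~\ref{estg1} where the $2.44/\sqrt{W}$ from Lemma~\ref{estwithphi} would have served just as well.
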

\subsection{Miscellaneous}
We recall an explicit estimate for the $\Gamma $ function (\cite{olver_asymptotics_1974}, p.294).
\begin{lemma}[Explicit Stirling formula]\label{stirling}
For $z=\sigma+it$, with $|\arg z| <\pi$ we have
\[
\left|\Gamma(z)\right|\le (2\pi)^{1/2}|t|^{\sigma-\frac{1}{2}}\exp\left(-\frac{\pi}{2}|t|+\frac{1}{6|z|}\right).
\]
\end{lemma}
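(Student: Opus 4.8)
The estimate is the classical Stirling approximation with an explicit error term, so the plan is to read it off a Binet-type representation of $\log\Gamma$ and then simplify. For $|\arg z|<\pi$ one has
\[
\log\Gamma(z)=\Bigl(z-\tfrac12\Bigr)\log z-z+\tfrac12\log(2\pi)+\mu(z),\qquad \mu(z)=\int_0^\infty\frac{\lfloor u\rfloor-u+\tfrac12}{u+z}\,du,
\]
with the principal branch of the logarithm. Integrating by parts against the continuous, bounded antiderivative $Q(u)=\tfrac12\{u\}(1-\{u\})\in[0,\tfrac18]$ turns the remainder into $\mu(z)=\int_0^\infty Q(u)\,(u+z)^{-2}\,du$, whence in the right half-plane $|\mu(z)|\le\tfrac18\int_0^\infty|u+z|^{-2}\,du\le\tfrac{\pi}{16|z|}$; the sharper clean bound
\[
|\mu(z)|\le\frac{1}{6|z|}
\]
is the estimate recorded on p.\,294 of \cite{olver_asymptotics_1974}, which I would simply quote. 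This is the only step where explicit constants really enter.

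Next I would take real parts. Writing $z=|z|e^{i\varphi}$ with $\varphi=\arg z$ gives $\operatorname{Re}\bigl[(z-\tfrac12)\log z\bigr]=(\sigma-\tfrac12)\log|z|-t\varphi$ and $\operatorname{Re}(-z)=-\sigma$, so that
\[
\log|\Gamma(z)|=\Bigl(\sigma-\tfrac12\Bigr)\log|z|-t\varphi-\sigma+\tfrac12\log(2\pi)+\operatorname{Re}\mu(z).
\]
By conjugation symmetry $|\Gamma(\bar z)|=|\Gamma(z)|$ we may take $t>0$; for $\sigma\ge0$ one has $\varphi=\tfrac\pi2-\arctan(\sigma/t)$, hence $-t\varphi-\sigma=-\tfrac\pi2 t+\bigl(t\arctan(\sigma/t)-\sigma\bigr)\le-\tfrac\pi2 t$ because $\arctan x\le x$. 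Combined with $|z|\ge|t|$ and $\operatorname{Re}\mu(z)\le\tfrac1{6|z|}$ this gives
\[
\log|\Gamma(z)|\le\Bigl(\sigma-\tfrac12\Bigr)\log|t|-\tfrac\pi2 t+\tfrac12\log(2\pi)+\frac1{6|z|}
\]
once $\sigma\le\tfrac12$, which is the range actually needed when the lemma is applied, through the functional equation, to $\Gamma(s/2)$ and $\Gamma((1-s)/2)$ with $s$ in the critical strip; in the remaining part of the sector one keeps $|z|^{\sigma-1/2}$ in place of $|t|^{\sigma-1/2}$, absorbing the residual $(\sigma-\tfrac12)\log(|z|/|t|)$. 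Exponentiating yields the claim.

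The only genuine obstacle is the explicit bound on the remainder $\mu(z)$: securing the clean constant $\tfrac16$ uniformly in the relevant sector is exactly what the cited page of \cite{olver_asymptotics_1974} provides, and with that in hand everything else reduces to the elementary real-part identity above together with $\arctan x\le x$ and the conjugation symmetry of $|\Gamma|$.
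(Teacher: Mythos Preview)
The paper gives no proof of this lemma; it simply records the bound with a pointer to \cite{olver_asymptotics_1974}, p.\,294. Your proposal goes further by sketching the derivation from Binet's formula, which is the natural route.

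There is, however, a real gap in the range that matters here. Your inequality $-t\varphi-\sigma\le-\tfrac{\pi}{2}t$ rests on the formula $\varphi=\tfrac{\pi}{2}-\arctan(\sigma/t)$ and on $\arctan x\le x$, both of which require $\sigma\ge0$. For $\sigma<0$ and $t>0$ one has instead $\varphi=\tfrac{\pi}{2}+\arctan(|\sigma|/t)$, and the same computation gives
\[
-t\varphi-\sigma=-\tfrac{\pi}{2}t+\bigl(|\sigma|-t\arctan(|\sigma|/t)\bigr)\ \ge\ -\tfrac{\pi}{2}t,
\]
so the step fails outright. Your suggested workaround (retain $|z|^{\sigma-1/2}$ in place of $|t|^{\sigma-1/2}$) yields a different inequality, not the one stated. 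The correct fix is to combine the two discarded pieces rather than bound them separately: with $x=|\sigma|/t$ the required inequality reduces to
\[
x-\arctan x\ \le\ \tfrac{x}{2}\log(1+x^2),
\]
which holds for $x\ge0$ since the derivative of $\tfrac{x}{2}\log(1+x^2)-x+\arctan x$ is $\tfrac12\log(1+x^2)\ge0$. The case $\sigma<0$ is precisely the one used in this paper, which applies the lemma to $\Gamma(\tfrac12-\beta+it)$ and $\Gamma(\tfrac32-\beta_r-\beta_s+it)$ with $\beta,\beta_r,\beta_s\ge\alpha_0\ge0.985$; it is not invoked through the functional equation to $\Gamma(s/2)$ and $\Gamma((1-s)/2)$ as you suppose. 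You should also verify that the remainder bound $|\mu(z)|\le\tfrac{1}{6|z|}$ you quote from Olver is valid in the left half-plane, since the standard version there carries a sectorial factor.
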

Finally, a fundamental role in the proof of Theorem \ref{th1} will be played by Halász--Montgomery inequality.
\begin{theorem}[\cite{Ivic2003TheRZ} A.39, A.40]\label{montineq}
  Let $\xi,\varphi_1,\dots,\varphi_R$ be arbitrary vectors in an inner-product vector space over $\mathbb{C}$, where $(a,b)$ will be the notation for the inner product and $||a||^2=(a,a)$. Then
  \[
  \sum_{r\le R}|(\xi,\varphi_r)|\le ||\xi||\left(\sum_{r,s\le R}|(\varphi_r,\varphi_s)|\right)^{1/2}
  \]
  and
   \[
  \sum_{r\le R}|(\xi,\varphi_r)|\le ||\xi||^2\max_{r\le R}\sum_{s\le R}|(\varphi_r,\varphi_s)|.
  \]
\end{theorem}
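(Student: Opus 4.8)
The plan is to derive both bounds as essentially one-line consequences of the Cauchy--Schwarz inequality, after a standard ``unimodular rotation'' of the inner products. I fix the convention that $(\cdot,\cdot)$ is linear in its first argument and conjugate-linear in its second, and for each $r\le R$ I choose a scalar $\varepsilon_r\in\mathbb{C}$ with $|\varepsilon_r|=1$ and $\varepsilon_r(\xi,\varphi_r)=|(\xi,\varphi_r)|$ (the value of $\varepsilon_r$ is irrelevant when $(\xi,\varphi_r)=0$).

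For the first inequality I would write
\[
\sum_{r\le R}|(\xi,\varphi_r)|=\sum_{r\le R}\varepsilon_r(\xi,\varphi_r)=\Bigl(\xi,\ \sum_{r\le R}\overline{\varepsilon_r}\,\varphi_r\Bigr)\le\|\xi\|\cdot\Bigl\|\sum_{r\le R}\overline{\varepsilon_r}\,\varphi_r\Bigr\|,
\]
the last step being Cauchy--Schwarz, and then expand the squared norm as $\sum_{r,s\le R}\overline{\varepsilon_r}\varepsilon_s(\varphi_r,\varphi_s)$, which is $\le\sum_{r,s\le R}|(\varphi_r,\varphi_s)|$ since each coefficient $\overline{\varepsilon_r}\varepsilon_s$ is unimodular. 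Taking square roots gives the claim.

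For the second inequality I would set $a_r:=\overline{(\xi,\varphi_r)}$ (so $|a_r|=|(\xi,\varphi_r)|$ and $a_r(\xi,\varphi_r)=|(\xi,\varphi_r)|^2$) and repeat the argument with $S:=\sum_{r\le R}|(\xi,\varphi_r)|^2=\bigl(\xi,\ \sum_{r\le R}\overline{a_r}\,\varphi_r\bigr)$ in place of $\sum_{r\le R}|(\xi,\varphi_r)|$. The only new ingredient is the elementary bound $|a_r||a_s|\le\tfrac12(|a_r|^2+|a_s|^2)$, which, together with the symmetry $r\leftrightarrow s$, turns the expanded norm $\sum_{r,s\le R}|a_r||a_s|\,|(\varphi_r,\varphi_s)|$ into $\sum_{r\le R}|a_r|^2\sum_{s\le R}|(\varphi_r,\varphi_s)|\le M\cdot S$, where $M:=\max_{r\le R}\sum_{s\le R}|(\varphi_r,\varphi_s)|$. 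Cauchy--Schwarz then gives $S\le\|\xi\|\sqrt{M S}$, hence $S\le\|\xi\|^2 M$ (the left-hand side of the stated inequality should accordingly carry the square $|(\xi,\varphi_r)|^2$, as homogeneity in $\xi$ demands). I do not foresee any genuine obstacle: each half is a single application of Cauchy--Schwarz together with the rotation trick, and the only care needed is in tracking the complex conjugates coming from the second slot of the inner product; alternatively one may just cite \cite{Ivic2003TheRZ}, A.39--A.40.
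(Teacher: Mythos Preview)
Your proof is correct, and in fact goes beyond what the paper does: the paper offers no argument at all for this theorem, merely stating it with a citation to Iv\'{i}c \cite{Ivic2003TheRZ}, A.39--A.40. Your derivation via the unimodular rotation trick and Cauchy--Schwarz is the standard one and is exactly what is found in Iv\'{i}c's book, so there is no divergence in approach---you have simply supplied the details that the paper elected to suppress.

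You are also right to flag the missing square in the second displayed inequality: as stated in the paper, the left-hand side $\sum_{r\le R}|(\xi,\varphi_r)|$ is linear in $\xi$ while the right-hand side $\|\xi\|^2\max_{r}\sum_{s}|(\varphi_r,\varphi_s)|$ is quadratic, so homogeneity fails; the correct inequality (and the one actually used later in the paper, cf.\ \eqref{afterhalaszmont}) has $|(\xi,\varphi_r)|^2$ on the left, matching Iv\'{i}c's A.40.
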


\section{Construction of the zero-detector}\label{zerodetector}
Throughout the proof, we will often refer to some techniques used by Pintz in a recent work \cite{pintz_new_2019} on some new non-explicit zero-density estimates for some $L$-functions.\\
We start defining the following Barban-Vehov weights $\left(\psi_d\right)_d$ and $\left(\theta_d\right)_d$ by
$$
\psi_d=\left\{\begin{array}{ll}
\mu(d) & 1 \leq d \leq U, \\ \\
\mu(d) \frac{\log (V / d)}{\log (V / U)} & U<d \leq V, \\ \\
0 & d>V,
\end{array} \right.
$$
and
$$\theta_d=\left\{\begin{array}{ll}
\mu(d) \frac{\log (W / d)}{\log W} & 1 \leq d \leq W, \\ \\
0 & d>W.
\end{array}\right.$$
Then, we denote
\begin{equation}\label{definepsisquare}
    \Psi(n)=\sum_{d \mid n} \psi_d,\qquad \Theta(n)=\sum_{d \mid n} \theta_d.
\end{equation}
We observe that $\theta_d$ is a special case of $\psi_d$ with $U=1$ and $V=W$. Also, $\Psi(n)=0$ for $2\le n\le U$.\\We aim to find some explicit bounds for sums involving both $\Psi$ and $\Theta$. In order to do that, we introduce two other quantities due to Graham \cite{GRAHAM197883}. 
Assume that $1 \leq  z_1 \leq  z_2$, and we let
$$
\begin{aligned}
\Lambda_i(d) & =\mu(d) \log \left(z_i / d\right) & & \text { if } d \leq  z_i \\
& =0 & & \text { if } d>z_i .
\end{aligned}
$$
We observe that, if $z_1<z_2$, and $z_1=U$, $z_2=V$, we have exactly
\begin{equation}\label{psiandlam}
    \psi_d=\frac{\Lambda_2(d)-\Lambda_1(d)}{\log(V/U)}.
\end{equation}
Furthermore, if we put $z_1=W$, we have
\begin{equation*}
    \theta_d=\frac{\Lambda_1(d)}{\log W}.
\end{equation*}
\begin{lemma}\label{grahamz1z2small}
Using the above notations, the following estimate holds for $z_1z_2<N$ and $1\le z_1\le z_2\le N$:
\begin{equation*}
\sum_{1 \leq  n \leq  N}\left(\sum_{d \mid n} \Lambda_1(d)\right)\left(\sum_{e \mid n} \Lambda_2(e)\right)\le 1.0061(N-1) \left(\log z_1+1.333+\frac{11}{\sqrt{z_1\log z_1}}\right)+N.
\end{equation*}.
\end{lemma}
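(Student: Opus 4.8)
The plan is to expand the double sum over $n$ by swapping the order of summation, pulling the sum over $n$ inside the sums over $d$ and $e$. Writing
\[
\sum_{1\le n\le N}\Bigl(\sum_{d\mid n}\Lambda_1(d)\Bigr)\Bigl(\sum_{e\mid n}\Lambda_2(e)\Bigr)
=\sum_{d\le z_1}\sum_{e\le z_2}\Lambda_1(d)\Lambda_2(e)\sum_{\substack{n\le N\\ [d,e]\mid n}}1
=\sum_{d\le z_1}\sum_{e\le z_2}\Lambda_1(d)\Lambda_2(e)\left\lfloor\frac{N}{[d,e]}\right\rfloor,
\]
where $[d,e]$ denotes the least common multiple. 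The first step is to replace $\lfloor N/[d,e]\rfloor$ by $N/[d,e]$ at the cost of an error bounded by the number of pairs $(d,e)$ with both $\Lambda_i$ nonzero, and then to handle the main term $N\sum_{d,e}\Lambda_1(d)\Lambda_2(e)/[d,e]$. Using $[d,e]=de/(d,e)$ and the standard identity $\tfrac{1}{\varphi(m)}=\tfrac{1}{m}\sum_{k\mid m}\tfrac{\mu^2(k)}{\varphi(k)}$ (or the Möbius-type manipulation $\sum_{k\mid (d,e)}$ of an appropriate multiplicative function), one reorganizes the sum so that it becomes a sum over a common divisor $k$ of a product of two independent linear sums in $\Lambda_1$ and $\Lambda_2$ respectively. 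This is exactly the device Graham uses; since $\Lambda_1$ is supported on $d\le z_1$ and $z_1\le z_2$, after this rearrangement the inner sums can be completed or bounded by sums of the shape $\sum_{d\le z_i,(d,k)=1}\mu(d)\log(z_i/d)/d$, which by Lemma~\ref{rammulog} are each $\mathcal{O}^*(1.00303\, k/\varphi(k))$, and a sum $\sum_{k\le z_1}\mu^2(k)/\varphi(k)$, which by Lemma~\ref{estwithphi} is $\log z_1+c_0+\mathcal{O}^*(11/\sqrt{z_1\log z_1})$.

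Concretely, I expect the main term to collapse to something very close to $N\bigl(\log z_1+c_0\bigr)$ up to the error terms from Lemma~\ref{rammulog} and Lemma~\ref{estwithphi}; since $c_0=1.332582\ldots<1.333$, this yields the $N(\log z_1+1.333)$ part of the claimed bound, with the $1.0061$ factor and the $11/\sqrt{z_1\log z_1}$ term absorbing the product of the $1.00303^2$ slack from the two applications of Lemma~\ref{rammulog} together with the $11/\sqrt{z_1\log z_1}$ error from Lemma~\ref{estwithphi}. The leftover additive $+N$ (and the replacement of $N$ by $N-1$ on the main piece) comes from bounding the floor-function error $\sum_{d,e}|\Lambda_1(d)\Lambda_2(e)|$ against $N$: here one uses the hypothesis $z_1z_2<N$ together with a crude bound $\sum_{d\le z_i}|\mu(d)|\log(z_i/d)\le z_i$ (indeed $\sum_{d\le z}\log(z/d)\le z$ for $z\ge 1$), so the total floor error is at most $z_1z_2<N$.

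The main obstacle will be the bookkeeping in the Möbius rearrangement: one must carefully justify that after writing $1/[d,e]=(d,e)/(de)$ and opening $(d,e)=\sum_{k\mid d,\ k\mid e}\varphi(k)$ (Euler's identity $\sum_{k\mid m}\varphi(k)=m$), the resulting triple sum factors as $\sum_{k}\varphi(k)\bigl(\sum_{k\mid d}\Lambda_1(d)/d\bigr)\bigl(\sum_{k\mid e}\Lambda_2(e)/e\bigr)$, and then to rewrite each inner sum with $d=k d'$ so that it becomes $\tfrac{\mu(k)}{k}\sum_{d'\le z_i/k,\ (d',k)=1}\mu(d')\log(z_i/(kd'))/d'$ — note the $\log(z_i/(kd'))$, not $\log((z_i/k)/d')$, so one must split $\log(z_i/(kd'))=\log((z_i/k)/d')$ and apply Lemma~\ref{rammulog} with $x=z_i/k$. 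Keeping every constant explicit through this chain, and checking that the accumulated multiplicative slack stays below $1.0061$ and that all the square-root error terms combine into the single stated $11/\sqrt{z_1\log z_1}$, is the delicate part; the analytic content is entirely contained in the three cited lemmas.
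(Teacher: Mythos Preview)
Your proposal is correct and follows essentially the same approach as the paper: swap the sums to get $\sum_{d,e}\Lambda_1(d)\Lambda_2(e)\lfloor N/[d,e]\rfloor$, bound the floor error by $\sum_{d,e}|\Lambda_1(d)||\Lambda_2(e)|\le z_1z_2<N$, open $1/[d,e]=\tfrac{1}{de}\sum_{r\mid(d,e)}\varphi(r)$ and factor into two sums of the form in Lemma~\ref{rammulog} (yielding the $1.00303^2\approx 1.0061$ factor), then finish with Lemma~\ref{estwithphi} on $\sum_{r\le z_1}\mu^2(r)/\varphi(r)$. The only cosmetic difference is that the paper writes the main term with coefficient $N-1$ (from restricting to $n>1$) rather than $N$, but your sketch already anticipates this.
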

\begin{proof}
Following Graham's argument \cite{GRAHAM197883} for the case $z_1z_2<N$, we have
\begin{comment}
        \begin{equation}
\begin{aligned}
&\sum_{1<n \leq  N}\left(\sum_{d \mid n} \Lambda_1(d)\right)\left(\sum_{e \mid n} \Lambda_2(e)\right) 
= \sum_{n \le N}(\sum_{k \le n}\Lambda_1(k)1\{k | n\})(\sum_{l \le n}\Lambda_2(l)1\{l | n\})
\\
&= \sum_{k \le z_1}\sum_{l \le z_2}\Lambda_1(k)\Lambda_2(l)\sum_{n \le N}1\{k \le n\}1\{l \le n\} 1\{[k, l] | n\}\\
&= \sum_{k \le z_1}\sum_{l \le z_2}\Lambda_1(k)\Lambda_2(l)\sum_{n \le N} 1\{[k, l] | n\}\\
&= \sum_{k \le z_1}\sum_{l \le z_2}\Lambda_1(k)\Lambda_2(l)(\frac{N}{[l, k]} + O^*(1))\\
\\&\le (N-1) \sum_{d\le z_1, e\le z_2} \frac{\Lambda_1(d) \Lambda_2(e)}{[d, e]}+\sum_{d\le z_1, e\le z_2}\left|\Lambda_1(d)\right|\left|\Lambda_2(e)\right|.
\end{aligned}
\end{equation}
\end{comment}

    \begin{equation}\label{firststep}
\begin{aligned}
&\sum_{1<n \leq  N}\left(\sum_{d \mid n} \Lambda_1(d)\right)\left(\sum_{e \mid n} \Lambda_2(e)\right)\\
&\le (N-1) \sum_{d\le z_1, e\le z_2} \frac{\Lambda_1(d) \Lambda_2(e)}{[d, e]}+\sum_{d\le z_1, e\le z_2}\left|\Lambda_1(d)\right|\left|\Lambda_2(e)\right|.
\end{aligned}
\end{equation}
Trivially,
\begin{equation}\label{errterm}
\begin{aligned}
    &\sum_{d\le z_1, e\le z_2}\left|\Lambda_1(d)\right|\left|\Lambda_2(e)\right|\le \sum_{d \leq  z_1} \log \left(\frac{z_1}{d}\right) \sum_{e \leq  z_2} \log \left(\frac{z_2}{e}\right)\\& \le( z_1-\log(z_1)-1)( z_2-\log(z_2)-1)\le z_1z_2\le N.
\end{aligned}
\end{equation}
Now, we analyze the main term. By Lemma \ref{rammulog} and Lemma \ref{estwithphi}, we have
\begin{equation}\label{mainterm}
\begin{aligned}
&\sum_{d\le z_1, e\le z_2} \frac{\Lambda_1(d) \Lambda_2(e)}{[d, e]} \\&
\quad=\sum_{d\le z_1, e\le z_2} \frac{\Lambda_1(d) \Lambda_2(e)}{d e} \sum_{r \mid(d, e)} \varphi(r) \\&
\quad=\sum_{r \leq  z_1} \frac{\mu^2(r) \varphi(r)}{r^2}\left(\sum_{\substack{m \leq  z_1 / r \\
(m, r)=1}} \frac{\mu(m)}{m} \log \left(\frac{z_1}{m r}\right)\right)\left(\sum_{\substack{n \leq  z_2 / r \\
(n, r)=1}} \frac{\mu(n)}{n} \log \left(\frac{z_2}{n r}\right)\right)\\&\le \sum_{r \leq  z_1} \frac{\mu^2(r) \varphi(r)}{r^2} \cdot (1.00303)^2\cdot \frac{r^2}{\varphi^2(r)}\\&\le 1.0061 \sum_{r \leq  z_1} \frac{\mu^2(r)}{ \varphi(r)}\\&\le 1.0061\cdot \left(\log z_1+1.333+\frac{11}{\sqrt{z_1\log z_1}}\right).
\end{aligned}
\end{equation}
Combining \eqref{firststep}, \eqref{errterm}, \eqref{mainterm}, we get
\begin{equation*}
    \sum_{1<n \leq  N}\left(\sum_{d \mid n} \Lambda_1(d)\right)\left(\sum_{e \mid n} \Lambda_2(e)\right)\le 1.0061(N-1)  \left(\log z_1+1.333+\frac{11}{\sqrt{z_1\log z_1}}\right)+N.
\end{equation*}
\end{proof}
\begin{lemma}\label{psi2caseeasy}
   Let $\Psi$ the function defined in \eqref{definepsisquare}. The following estimate holds when $N>UV$:
    \begin{equation*}
        \sum_{n=1}^N\Psi^2(n)\le \frac{4}{\log^2 (V/U)} 1.0061\left((N-1)\left(\log U+1.333+\frac{11}{\sqrt{U\log U}}\right)+N\right).
    \end{equation*}
\end{lemma}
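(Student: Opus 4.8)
The plan is to reduce the statement to Lemma~\ref{grahamz1z2small} via \eqref{psiandlam}. Taking $z_1=U$ and $z_2=V$ there gives $\psi_d=(\Lambda_2(d)-\Lambda_1(d))/\log(V/U)$, so that $\Psi(n)=\frac{1}{\log(V/U)}\bigl(\sum_{d\mid n}\Lambda_2(d)-\sum_{d\mid n}\Lambda_1(d)\bigr)$, and hence, separating off the term $n=1$ (which contributes $\Psi(1)^2=1$),
\[
\sum_{n=1}^{N}\Psi^2(n)=1+\frac{1}{\log^{2}(V/U)}\sum_{1<n\le N}\Bigl(\sum_{d\mid n}\bigl(\Lambda_2(d)-\Lambda_1(d)\bigr)\Bigr)^{2}.
\]
It therefore suffices to bound $\Sigma:=\sum_{1<n\le N}\bigl(\sum_{d\mid n}(\Lambda_2(d)-\Lambda_1(d))\bigr)^{2}$ by a small multiple of the quantity appearing in Lemma~\ref{grahamz1z2small}.

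I would treat $\Sigma$ exactly as in the proof of Lemma~\ref{grahamz1z2small}: expanding the square, swapping the order of summation and using $\#\{1<n\le N:[d,e]\mid n\}=(N-1)/[d,e]+\mathcal{O}^{*}(1)$ gives
\[
\Sigma\le(N-1)\sum_{d,e\le V}\frac{\bigl(\Lambda_2(d)-\Lambda_1(d)\bigr)\bigl(\Lambda_2(e)-\Lambda_1(e)\bigr)}{[d,e]}+\Bigl(\sum_{d\le V}\bigl|\Lambda_2(d)-\Lambda_1(d)\bigr|\Bigr)^{2}.
\]
Since $|\Lambda_2(d)-\Lambda_1(d)|$ is $\log(V/U)$ for $d\le U$ and $\log(V/d)$ for $U<d\le V$, one checks $\sum_{d\le V}|\Lambda_2(d)-\Lambda_1(d)|\le V$, so the error term is $\le V^{2}\le N$ in the range of interest. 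For the main term I would insert $[d,e]^{-1}=(de)^{-1}\sum_{r\mid(d,e)}\varphi(r)$ and diagonalise as in \eqref{mainterm}, writing it as $\sum_{r\le V}\frac{\mu^{2}(r)\varphi(r)}{r^{2}}S_r^{2}$ with $S_r=\sum_{m\le V/r,\,(m,r)=1}\frac{\mu(m)}{m}\cdot\frac{\Lambda_2(rm)-\Lambda_1(rm)}{\mu(rm)}$. The key point is that $S_r$ telescopes: splitting the weight at the cut-off $rm=U$ one checks
\[
S_r=\sum_{\substack{m\le V/r\\(m,r)=1}}\frac{\mu(m)\log((V/r)/m)}{m}-\sum_{\substack{m\le U/r\\(m,r)=1}}\frac{\mu(m)\log((U/r)/m)}{m}
\]
for $r\le U$ (only the first sum survives when $r>U$), so that the $\log(V/U)$-weighted Möbius sum that would otherwise appear cancels; applying Lemma~\ref{rammulog} to each of the two Ramaré-type sums gives $|S_r|\le 1.00303\,r/\varphi(r)$ for every $r$. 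Hence the main term is $\le 1.0061(N-1)\sum_{r\le V}\mu^{2}(r)/\varphi(r)$, which by Lemma~\ref{estwithphi} is $\le 1.0061(N-1)\bigl(\log V+1.333+\frac{11}{\sqrt{V\log V}}\bigr)$.

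Putting the two estimates together, $\Sigma\le 1.0061(N-1)\bigl(\log V+1.333+\frac{11}{\sqrt{V\log V}}\bigr)+N$. In the range relevant here $V$ is small enough compared with $U$ (concretely, it is enough that $\log V\le 4\log U$) that $\log V+1.333+\frac{11}{\sqrt{V\log V}}\le 4\bigl(\log U+1.333+\frac{11}{\sqrt{U\log U}}\bigr)$ (here one also uses $\sqrt{V\log V}\ge\sqrt{U\log U}$), so that $\Sigma\le 4\cdot 1.0061\bigl((N-1)(\log U+1.333+\frac{11}{\sqrt{U\log U}})+N\bigr)$; dividing by $\log^{2}(V/U)$ and noting that the additive $1$ is absorbed then yields the claim. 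I expect the main obstacle to be the telescoping identity for $S_r$ — the splitting at $rm=U$ must be done so that the $\log(V/U)$ term genuinely cancels and no spurious factor of $\log(V/U)$ is left behind, which is precisely what makes the Barban--Vehov weights efficient here. Verifying that the error term $(\sum_{d\le V}|\Lambda_2(d)-\Lambda_1(d)|)^{2}$ is $\le N$, and that the numerical factor $4$ suffices throughout the parameter table, are the remaining (routine) points.
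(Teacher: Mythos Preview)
Your argument and the paper's reach the same bound via different bookkeeping. The paper simply expands $\bigl(\sum_{d\mid n}(\Lambda_2(d)-\Lambda_1(d))\bigr)^2$ into the four bilinear pieces $\sum_{d\mid n}\Lambda_i(d)\sum_{e\mid n}\Lambda_j(e)$, bounds by $4$ times the maximum over $i,j$, and then invokes Lemma~\ref{grahamz1z2small} once. You instead keep the Barban--Vehov weight intact and rerun Graham's diagonalisation for it directly; the step you call ``telescoping'' --- that $S_r$ is a difference of two Ramar\'e--type sums, each lying in $[0,\,1.00303\,r/\varphi(r)]$ by Lemma~\ref{rammulog}, whence $|S_r|\le 1.00303\,r/\varphi(r)$ --- is correct and is in fact the cleaner way to see why no stray factor of $\log(V/U)$ survives. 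Your route lands on $\log V$ in the main term and then spends the factor $4$ via the parameter inequality $\log V\le 4\log U$ (equivalently $v\le 4u$, which does hold throughout Table~\ref{uxvw}); the paper spends it up front on the four-term split.

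One genuine gap, shared with the paper: your error bound $\bigl(\sum_{d\le V}|\Lambda_2(d)-\Lambda_1(d)|\bigr)^{2}\le V^{2}\le N$ needs $N\ge V^{2}$, not merely the stated hypothesis $N>UV$. Since $V>U$ one always has $V^{2}>UV$, so for $N$ just above $UV$ the inequality $V^{2}\le N$ fails --- and with $V/U=T^{\,v-u}$ it fails by a large margin in every row of Table~\ref{uxvw}. The paper's proof has the same defect implicitly, since applying Lemma~\ref{grahamz1z2small} to the $\Lambda_2\Lambda_2$ piece also presupposes $V^{2}<N$. In the only downstream use (deriving \eqref{specialcase} for the partial summation in Lemma~\ref{an2bn}) this does not disturb the numerics, but as written neither argument actually establishes the lemma under the bare hypothesis $N>UV$.
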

\begin{proof}
        By \eqref{psiandlam}, 
        \begin{equation*}
            \begin{aligned}
                &\log^2(V/U)\Psi^2(n)=\log^2(V/U)\left(\sum_{d \mid n} \psi_d\right)\left(\sum_{e \mid n} \psi_e\right)\\&=\left(\sum_{d|n}(\Lambda_2(d)-\Lambda_1(d))\right)\left(\sum_{e|n}(\Lambda_2(e)-\Lambda_1(e))\right)\\&=\left[\sum_{d|n}\sum_{e|n}\Lambda_2(d)\Lambda_2(e)-\sum_{d|n}\sum_{e|n}\Lambda_2(d)\Lambda_1(e)-\sum_{d|n}\sum_{e|n}\Lambda_1(d)\Lambda_2(e)+\sum_{d|n}\sum_{e|n}\Lambda_1(d)\Lambda_1(e)\right]\\&\le 4\max_{i,j=1,2}\left|\sum_{d|n}\sum_{e|n}\Lambda_i(d)\Lambda_j(e)\right|.
            \end{aligned}
        \end{equation*}
        Hence, by Lemma \ref{grahamz1z2small}, we can conclude that
        \begin{equation*}
            \begin{aligned}
                &\sum_{n=1}^N\Psi^2(n)\le \frac{4}{\log^2 (V/U)}\left((N-1)\cdot 1.0061\cdot \left(\log U+1.333+\frac{11}{\sqrt{U\log U}}\right)+N\right).
            \end{aligned}
        \end{equation*}
    \end{proof}
In particular, for our purpose we will choose $U=T^{u}$ and $V=T^{v}$, with $T_0\le T\le T_1$ (numerical values of $u,v$ depending on the range of $T$ can be found in Table \ref{uxvw} in Appendix \ref{numerical}). Under these assumptions, whenever $N>UV$, Lemma \ref{psi2caseeasy} implies
    \begin{equation}\label{specialcase}
        \begin{aligned}
            &\sum_{n=1}^{N}\Psi^2(n)\le d_3\cdot \frac{N}{\log T},
        \end{aligned}
    \end{equation}
    where $d_3=d_3(u,v,T_0,T_1)$ and numerical values for $d_3$ can be found in Table \ref{dconstants} in Appendix \ref{numerical}.\\
    Finally, we recall a result due to Ramaré and Zuniga-Altmerman about a sum involving $\Psi(n)$.
    \begin{lemma}[\cite{ramaré2024l2bound}, Corollary 1.2]\label{ramarepreprint}
   The following estimate holds:
   
   \begin{equation}\label{ramaresebnew}
\sum_{n=1}^{N} \frac{\Psi^2(n)}{n}\le 3.09 \frac{\log N}{\log \left(z_2 / z_1\right)} \frac{1.084(t+1)+1.301\left(1+t^2\right)-0.116}{t-1} .
\end{equation}
where $z_2=z_1^t$ and for any $N\ge z_1\ge 100$. In particular, at $t=2$, and $z_1=z$, we have 
\[
\sum_{n=1}^{N} \frac{\Psi^2(n)}{n}\le 30\frac{\log N}{\log z}.
\]
\end{lemma}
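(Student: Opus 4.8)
The plan is to quote the main inequality \eqref{ramaresebnew} verbatim from \cite{ramaré2024l2bound}, Corollary 1.2, after checking that our Barban--Vehov weights $(\psi_d)$ match the normalisation used there: one needs $z_1 = U$, $z_2 = V$, and the hypotheses $N \ge z_1 \ge 100$ and $z_2 = z_1^t$. In our application $U = T^u$, $V = T^v$ with $T \ge T_0$ large, so $z_1 = T^u \ge 100$ is immediate, and $t = v/u$; in the displayed special case one simply takes $t = 2$, i.e. $v = 2u$. Thus the first step is purely a matter of citing the external result with the correct dictionary of parameters.

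The second step is to derive the clean bound at $t = 2$. Setting $t = 2$ in the fraction on the right of \eqref{ramaresebnew} gives
\[
\frac{1.084(t+1) + 1.301(1+t^2) - 0.116}{t-1}
= \frac{1.084 \cdot 3 + 1.301 \cdot 5 - 0.116}{1}
= 3.252 + 6.505 - 0.116 = 9.641,
\]
and with $\log(z_2/z_1) = \log z_1 = \log z$ (writing $z_1 = z$) the prefactor becomes $3.09 \cdot 9.641 = 29.79\ldots < 30$. Hence
\[
\sum_{n=1}^{N} \frac{\Psi^2(n)}{n} \le 3.09 \cdot \frac{\log N}{\log z} \cdot 9.641 \le 30 \, \frac{\log N}{\log z},
\]
which is the asserted simplified inequality. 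This is the only genuine computation, and it is elementary.

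There is essentially no obstacle here: the content of the lemma is imported wholesale from \cite{ramaré2024l2bound}, and the work reduces to verifying that the parametrisation $z_1 = U$, $z_2 = V$ is consistent with the definition of $\Psi$ in \eqref{definepsisquare} and with the standing assumptions on $U, V$ (namely $U \ge 100$, which holds since $U = T^u$ with $T \ge T_0 \gg 1$), together with the one-line arithmetic reduction at $t = 2$. The mild point to be careful about is that \eqref{ramaresebnew} requires $N \ge z_1$; in the later application of this lemma one should therefore only invoke it for $N \ge U$, which will be guaranteed by the ranges of the parameters chosen in Table \ref{uxvw}.
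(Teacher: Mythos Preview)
Your proposal is correct and matches the paper's approach: the lemma is quoted directly from \cite{ramaré2024l2bound} without proof in the paper, and you have supplied the (trivial) arithmetic verification for the $t=2$ specialisation, which the paper leaves implicit.
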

\subsection{Setting up the argument}
We start defining some parameters that will be used throughout the proof. Given a fixed $T\ge 3\cdot 10^{12}$, we define the following quantities:
\begin{equation}\label{choiceparam}
    \mathscr{L}:=\log T,\quad U=T^u,\quad  V=T^v,\quad W=T^w, \quad X=T^x,
\end{equation} 
where $0<u<v$ and $w, x>0$, with $w<u$, $u+v<x$, are suitably chosen positive constants whose values can be found in Table \ref{uxvw} in Appendix \ref{numerical} and depend on both the range of $T\in (T_0,T_1]$ and $\alpha_0\le\sigma$. We split the range of values for $T$ into several sub-intervals $(T_0,T_1]$, in order to have sharper estimates in each of these smaller ranges. The criteria with which the sizes of the different intervals are chosen will be explained in Appendix \ref{numerical}.\\
From now on, we will present the argument in terms of $x,u,v,w$. Then, depending on the range of $T$, we just need to substitute the values in the table inside the argument to get the final numerical estimate of Theorem \ref{th1} for each of the intervals. 
\begin{lemma}\label{lowerbounddetector}
 If $\rho$ is a non-trivial zero of $\zeta(s)$ with $\operatorname{Re}(\rho)> \alpha_0$ and $\gamma>0.447\log T$, $T\in(T_0,T_1]$, $T_0\ge 3\cdot 10^{12}$, then
$$
\left|\sum_{n=1}^{\infty}\Psi(n)\Theta(n) n^{-\rho}\left(e^{-n / X}-e^{-n \mathscr{L}^2 / U}\right)\right| \ge c_1,
$$
where $c_1=c_1(\alpha_0,x,u,v,w,T_0,T_1)$ and numerical values can be found in Table \ref{cconstants} in Appendix \ref{numerical}.
\end{lemma}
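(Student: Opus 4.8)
The plan is to realize the Dirichlet-type series as a smooth-weighted sum and extract the "main term" $n=1$ while bounding the tail $n\ge 2$ away from $1$. Since $\Psi(1)=\Theta(1)=1$, the $n=1$ contribution is exactly $e^{-1/X}-e^{-\mathscr{L}^2/U}$, which is extremely close to $1$ because $X=T^x$ is large and $U/\mathscr{L}^2=T^u/\log^2 T\to\infty$; so this term contributes $1-O(X^{-1})-O(e^{-\mathscr{L}^2/U})$, i.e.\ essentially $1$. The whole point is therefore to show that the remaining sum over $n\ge 2$ is bounded by something strictly less than $1$, uniformly for zeros $\rho=\beta+i\gamma$ with $\beta>\alpha_0$ and $\gamma>0.447\log T$; the stated constant $c_1$ is then $1$ minus that bound (minus the small corrections from the $n=1$ term).

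First I would use the Mellin-transform representation of $e^{-y}$, namely $e^{-y}=\frac{1}{2\pi i}\int_{(c)}\Gamma(s)y^{-s}\,ds$ for $c>0$, to write
\[
\sum_{n\ge 1}\Psi(n)\Theta(n)n^{-\rho}e^{-n/X}
=\frac{1}{2\pi i}\int_{(c)}\Gamma(s)X^{s}F(\rho+s)\,ds,
\qquad
F(s):=\sum_{n\ge 1}\frac{\Psi(n)\Theta(n)}{n^{s}},
\]
and similarly with $U/\mathscr{L}^2$ in place of $X$; here $F(s)$ is an absolutely convergent Dirichlet series for $\operatorname{Re}(s)>1$ since $\Psi,\Theta$ are bounded by divisor-type factors. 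Because $\Psi(n)=0$ for $2\le n\le U$ and $\Theta(n)=0$ for $n>W$ with $w<u$, the product $\Psi(n)\Theta(n)$ is supported on $n=1$ only among $n\le \min(U,W^{?})$... more precisely $\Theta$ is supported on $n\le W$ and $\Psi$ vanishes on $2\le n\le U>W$, so $\Psi(n)\Theta(n)=0$ for all $n\ge 2$ with $n\le W$; but $\Theta(n)=0$ for $n>W$ too, hence $\Psi(n)\Theta(n)=0$ for every $n\ge 2$. This is the cleanest route: the product $\Psi\Theta$ is supported entirely on $n=1$, so the infinite sum collapses to its $n=1$ term times $\big(e^{-1/X}-e^{-\mathscr{L}^2/U}\big)$, and the lemma reduces to checking $e^{-1/X}-e^{-\mathscr{L}^2/U}\ge c_1$, a routine numerical verification using $x>0$, $u>0$ and $T\in(T_0,T_1]$ with $T_0\ge 3\cdot10^{12}$.

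The main obstacle — and the reason the hypothesis on $\rho$ and $\gamma$ is even present — is that the above collapse argument only works if $w<u$ is genuinely imposed, and one must double-check from Table \ref{uxvw} that in every sub-interval the chosen parameters satisfy $W<U$ (equivalently $w<u$), which the excerpt asserts after \eqref{choiceparam}; granting that, no analytic input about the zeros is needed here and $c_1$ is simply a decreasing-in-$T$ but positive quantity, bounded below by evaluating $e^{-1/T_0^{x}}-e^{-(\log T_1)^2/T_0^{u}}$ on each interval. If instead the intended reading is that the product is \emph{not} supported only on $n=1$ (e.g.\ if the roles of $U,V,W$ make $\Theta$ reach beyond $U$), then the fallback is the contour-shift argument: move the line of integration to $\operatorname{Re}(s)=1-\beta+\varepsilon<0$ past the pole structure, pick up the residue at $s=0$ (value $F(\rho)$, handled via the Barban--Vehov/Graham bounds of Lemmas \ref{grahamz1z2small}--\ref{psi2caseeasy}), and bound the shifted integral using the Stirling estimate of Lemma \ref{stirling} for $\Gamma$ together with $|X^{s}|=X^{\operatorname{Re}(s)}$ small; the factor $e^{-(\pi/2)|t|}$ from Stirling combined with $\gamma>0.447\log T$ controls the vertical integral, and the same is done for the $e^{-n\mathscr{L}^2/U}$ piece with $U/\mathscr{L}^2$ replacing $X$. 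Either way, the quantitative heart is bookkeeping with the tabulated parameters rather than a new idea, and I would organize the write-up so that the bound on $n\ge 2$ (or on the shifted contour) is stated as an explicit inequality whose right-hand side, subtracted from the $n=1$ main term, defines $c_1$.
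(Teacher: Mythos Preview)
Your ``cleanest route'' rests on a confusion between the support of $\theta_d$ and that of $\Theta(n)$. It is true that $\theta_d=0$ for $d>W$, but $\Theta(n)=\sum_{d\mid n,\,d\le W}\theta_d$ is defined for all $n$ and is typically nonzero for $n>W$: for instance, if $p$ is a prime with $p>W$ then $\Theta(p)=\theta_1=1$. Hence $\Psi(n)\Theta(n)$ is \emph{not} supported on $\{1\}$, and the sum does not collapse; this is precisely why the lemma is nontrivial and why the hypotheses on $\rho$ matter.

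Your fallback contour-shift sketch is closer to what is needed but misses the central mechanism. The Dirichlet series factors as
\[
F(s)=\sum_{n\ge 1}\Psi(n)\Theta(n)n^{-s}=\zeta(s)M(s),\qquad M(s)=\sum_{v\le V}\sum_{w\le W}\psi_v\theta_w[v,w]^{-s},
\]
since $d\mid n$ and $e\mid n$ is equivalent to $[d,e]\mid n$. This is why $\rho$ being a zero of $\zeta$ enters: the residue at $s=0$ is $F(\rho)=\zeta(\rho)M(\rho)=0$, not something to be controlled by Barban--Vehov/Graham bounds. The paper shifts the line to $\operatorname{Re}(s)=\tfrac12-\beta$, keeps the (vanishing) residue at $s=0$ and the residue $M(1)\Gamma(1-\rho)X^{1-\rho}$ from the pole of $\zeta(s+\rho)$ at $s=1-\rho$, and then bounds the integral on the critical line using Lemma~\ref{onehalf} for $\zeta(\tfrac12+it)$ together with Lemma~\ref{stirling}; the hypothesis $\gamma>0.447\log T$ is exactly what makes the residue term negligible through the Stirling factor $e^{-\pi\gamma/2}$. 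The second smoothing $e^{-n\mathscr{L}^2/U}$ is handled not by another contour shift but directly: since $\Psi(n)=0$ for $2\le n\le U$, that sum equals $e^{-\mathscr{L}^2/U}$ plus a tail over $n>U$ which the cutoff $e^{-n\mathscr{L}^2/U}$ renders negligibly small. Subtracting these two pieces yields the lower bound $c_1$.
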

\begin{proof}
    As per \cite{pintz_new_2019}, we have
    \begin{equation*}
        S(X)=\sum_{n=1}^{\infty}\Psi(n)\Theta(n) n^{-\rho}e^{-n / X}=\int_{1-i \infty}^{1+i \infty} \zeta(s+\rho) M(s+\rho) \Gamma(s) X^s \text{d} s,
    \end{equation*}
    where 
    \[
M(s)=\sum_{v \leq V} \sum_{w \leq W} \psi_v \theta_w [v, w]^{-s}.
\]
    Moving the line of integration to the the line $\operatorname{Re}(s)=\frac{1}{2}-\beta$, we get
    \begin{equation}\label{S(x)}
    \begin{aligned}
        S(X)&=\zeta(\rho)M(\rho)+M(1)\Gamma(1-\rho)X^{1-\rho}+\frac{1}{2\pi i}\int_{\frac{1}{2}-\beta-i \infty}^{\frac{1}{2}-\beta+i \infty} \zeta(s+\rho) M(s+\rho) \Gamma(s) X^s \text{d} s\\&=M(1)\Gamma(1-\rho)X^{1-\rho}\\&\quad +\frac{1}{2\pi i}\int_{- \infty}^{+ \infty} \zeta\left(\frac{1}{2}+i\gamma+it\right) M\left(\frac{1}{2}+i\gamma+it\right) \Gamma\left(\frac{1}{2}-\beta+it\right) X^{\frac{1}{2}-\beta+it} \text{d} t.
    \end{aligned}
    \end{equation}
First, since by Lemma \ref{d2overn} one has
\begin{equation*}
    \begin{aligned}
        &\left|M\left(1\right)\right|=\left|\sum_{v \leq V} \sum_{w \leq W} \psi_v \theta_w [v, w]^{-1}\right|\le \sum_{v \leq V} \sum_{w \leq W} [v, w]^{-1}\\&\le \sum_{n\le VW}d^{2}(n)\cdot n^{-1} \le   d_2\log^4(VW),
    \end{aligned}
\end{equation*}
using $X=T^x$, $VW=T^{v+w}$, we get
\begin{equation}\label{firstfromresidue}
    \begin{aligned}
       &| M(1)\Gamma(1-\rho)X^{1-\rho}|\le \frac{ d_2\cdot (2\pi)^{1/2}e^{\frac{1}{6\gamma}}\log^4(VW)X^{1-\beta}}{e^{\frac{\pi \gamma}{2}}\gamma^{\beta-1+\frac{1}{2}}}\\& \le \frac{ d_2\cdot (u+w)^4(2\pi)^{1/2}e^{\frac{1}{6\gamma}}(\log T)^4T^{x(1-\beta)}}{e^{\pi\cdot 0.447 \log T/2}(0.447\log T)^{\beta-1+\frac{1}{2}}}.
    \end{aligned}
\end{equation}
Now, we estimate the second term of \eqref{S(x)}.\\
By Lemma \ref{d2overn},
\begin{equation*}
    \begin{aligned}
        &\left|M\left(\frac{1}{2}+i(\gamma+t)\right)\right|=\left|\sum_{v \leq V} \sum_{w \leq W} \psi_v \theta_w [v, w]^{-(\frac{1}{2}+i(\gamma+t))}\right|\le \sum_{v \leq V} \sum_{w \leq W} [v, w]^{-1/2}\\&\le \sum_{n\le VW}d^{2}(n)\cdot n^{-1/2} \le   d_2\cdot (VW)^{1/2}\log^4(VW).
    \end{aligned}
\end{equation*}
Hence, being $VW=T^{v+w}$, $\log^4(VW)=(v+w)^4\log^4T$, $X=T^x$,  the second term of \eqref{S(x)} becomes
\begin{equation}\label{termsx}
    \begin{aligned}
        & \frac{1}{2\pi}\left|\int_{- \infty}^{+ \infty} \zeta\left(\frac{1}{2}+i\gamma+it\right) M\left(\frac{1}{2}+i\gamma+it\right) \Gamma\left(\frac{1}{2}-\beta+it\right) X^{\frac{1}{2}-\beta+it} \text{d} t\right|\\&\le \frac{ d_2}{2\pi}\cdot (v+w)^4T^{(v+w)/2}\log^4T\cdot T^{x(\frac{1}{2}-\beta)}\int_{- \infty}^{+ \infty} \left|\zeta\left(\frac{1}{2}+i\gamma+it\right)\right|\left| \Gamma\left(\frac{1}{2}-\beta+it\right)\right|\text{d} t.
    \end{aligned}
\end{equation}
Now, it remains to estimate the integral
\[
\int_{- \infty}^{+ \infty} \left|\zeta\left(\frac{1}{2}+i\gamma+it\right)\right|\left| \Gamma\left(\frac{1}{2}-\beta+it\right)\right|\text{d} t.
\]
We split the integral in three intervals of integration, that is,
\[
\int_{- \infty}^{+ \infty} \left|\zeta\left(\frac{1}{2}+i\gamma+it\right)\right|\left| \Gamma\left(\frac{1}{2}-\beta+it\right)\right|\text{d} t=\int_{-\infty}^{-3-\gamma}+\int_{-3-\gamma}^{3-\gamma}+\int_{3-\gamma}^{\infty},
\]
and we estimate each term individually.\\
By Lemma \ref{onehalf} and Lemma \ref{stirling},
\begin{equation}\label{firstint}
    \begin{aligned}
     &\int_{- \infty}^{-3-\gamma} \left|\zeta\left(\frac{1}{2}+i\gamma+it\right)\right|\left| \Gamma\left(\frac{1}{2}-\beta+it\right)\right|\text{d} t\\&\le (2\pi)^{1/2}\cdot e^{1/6|1/2-\beta|} \int_{-\infty}^{-3-\gamma}\left|\zeta\left(\frac{1}{2}+i\gamma+it\right)\right||t|^{-\beta}e^{-\frac{\pi}{2}|t|}dt \\&\le (2\pi)^{1/2}\cdot e^{1/6|1/2-\alpha_0|}\cdot 66.7 \int_{-\infty}^{-3-\gamma}|t+\gamma|^{ 27/164}|t|^{-\beta}e^{-\frac{\pi}{2}|t|}dt\\&\le (2\pi)^{1/2}\cdot e^{1/6|1/2-\alpha_0|}\cdot 66.7 \int_{-\infty}^{-3-\gamma}\left|2t\right|^{ 27/164}|t|^{-\beta}e^{-\frac{\pi}{2}|t|}dt\\&\le (2\pi)^{1/2}\cdot e^{1/6|1/2-\alpha_0|}\cdot 66.7 \int_{3+\gamma}^{+\infty}\left|2t\right|^{ 27/164}|t|^{-\beta}e^{-\frac{\pi}{2}|t|}dt\\&\le (2\pi)^{1/2}\cdot e^{1/6|1/2-\alpha_0|}\cdot 66.7 \int_{3+0.447\log(T_0)}^{+\infty}(2t)^{ 27/164}t^{-\alpha_0}e^{-\frac{\pi}{2}t}dt\le 10^{-10},
    \end{aligned}
\end{equation}
where we use $\gamma>0.447\log T$, $3\cdot 10^{12}\le T_0$, $\beta\ge \alpha_0\ge 0.985$.\\Similarly,
\begin{equation}\label{secondint}
    \begin{aligned}
     &\int_{-3-\gamma}^{3-\gamma} \left|\zeta\left(\frac{1}{2}+i\gamma+it\right)\right|\left| \Gamma\left(\frac{1}{2}-\beta+it\right)\right|\text{d} t\\&\le (2\pi)^{1/2}\cdot e^{1/6|1/2-\beta|} \int_{-3-\gamma}^{3-\gamma}\left|\zeta\left(\frac{1}{2}+i\gamma+it\right)\right||t|^{-\beta}e^{-\frac{\pi}{2}|t|}dt \\&\le (2\pi)^{1/2}\cdot e^{1/6|1/2-\alpha_0|}\cdot1.461\int_{-3-\gamma}^{3-\gamma}|t|^{-\beta}e^{-\frac{\pi}{2}|t|}dt\\&\le (2\pi)^{1/2}\cdot e^{1/6|1/2-\alpha_0|}\cdot1.461\int_{\gamma-3}^{3+\gamma}t^{-\alpha_0}e^{-\frac{\pi}{2}t}dt\le 10^{-8}.
    \end{aligned}
\end{equation}
Finally, we consider
\[
\int_{3 - \gamma}^{\infty} \left|\zeta\left(\frac{1}{2}+i\gamma+it\right)\right|\left| \Gamma\left(\frac{1}{2}-\beta+it\right)\right|\text{d} t = \int_{3 - \gamma}^0 + \int_0^\infty = I_1 + I_2.
\]
We start with $I_2$. By Lemma \ref{onehalf} and Lemma \ref{stirling}, 
\begin{equation}\label{inti2}
    \begin{aligned}
        &I_2=\int_{0}^{\infty} \left|\zeta\left(\frac{1}{2}+i\gamma+it\right)\right|\left| \Gamma\left(\frac{1}{2}-\beta+it\right)\right|\text{d} t\\&\le 66.7\int_{0}^{1}|t+\gamma|^{ 27/164}\left| \Gamma\left(\frac{1}{2}-\beta+it\right)\right|dt+(2\pi)^{1/2} e^{1/6} 66.7 \int_{1}^{\infty}|t+\gamma|^{ 27/164}|t|^{-\beta}e^{-\frac{\pi}{2}|t|}dt\\&\le 66.7\cdot 1.01243\cdot \gamma^{27/164}\cdot 1.808+ 197.515 \int_{1}^{\infty}(t+\gamma)^{ 27/164}t^{-\alpha_0}e^{-\frac{\pi}{2}t}dt\\&\le 122.092\gamma^{27/164}+197.515 \cdot \gamma^{27/164}\int_{1}^{\infty}\left(\frac{t}{\gamma}+1\right)^{ 27/164}t^{-\alpha_0}e^{-\frac{\pi}{2}t}dt\\&\le 140.297\gamma^{27/164}.
    \end{aligned}
\end{equation}
Similarly,
\begin{equation}\label{inti1}
    \begin{aligned}
        &I_1=\int_{3-\gamma}^{0} \left|\zeta\left(\frac{1}{2}+i\gamma+it\right)\right|\left| \Gamma\left(\frac{1}{2}-\beta+it\right)\right|\text{d} t\\&\le 66.7\int_{-1}^{0}|t+\gamma|^{ 27/164}\left| \Gamma\left(\frac{1}{2}-\beta+it\right)\right|dt+197.515 \int_{3-\gamma}^{-1}|t+\gamma|^{ 27/164}|t|^{-\beta}e^{-\frac{\pi}{2}|t|}dt\\&\le 66.7\cdot 1.01243\cdot \gamma^{27/164}\cdot 1.808+ 197.515\int_{1}^{\gamma-3}(t+\gamma)^{ 27/164}t^{-\alpha_0}e^{-\frac{\pi}{2}t}dt\\&\le 120.092\gamma^{27/164}+197.515 \gamma^{27/164}\int_{1}^{\infty}\left(\frac{t}{\gamma}+1\right)^{ 27/164}t^{-\alpha_0}e^{-\frac{\pi}{2}t}dt\\&\le 140.297\gamma^{27/164}.
    \end{aligned}
\end{equation}
Hence,
\begin{equation}\label{thirdint}
    \int_{3 - \gamma}^{\infty} \left|\zeta\left(\frac{1}{2}+i\gamma+it\right)\right|\left| \Gamma\left(\frac{1}{2}-\beta+it\right)\right|\text{d} t \le 280.594\gamma^{27/164}.
\end{equation}

Combining \eqref{firstint}, \eqref{secondint}, \eqref{thirdint}, being $|\gamma|\le T$, we can conclude that \eqref{termsx} is
\begin{equation}\label{secondtermint}
\begin{aligned}
        &\le  \frac{d_2\cdot 280.594}{2\pi}\cdot (v+w)^4T^{(v+w)/2}\log^4T\cdot T^{x(\frac{1}{2}-\beta)} \cdot  \gamma^{27/164}+10^{-6}\\&\le  44.66\cdot d_2\cdot (v+w)^4T^{(v+w)/2}\log^4T\cdot T^{x(\frac{1}{2}-\beta)} \cdot  T^{27/164}\\&\le  44.66\cdot d_2\cdot (v+w)^4T^{(v+w)/2+x(\frac{1}{2}-\alpha_0)+27/164}\log^4T.
\end{aligned}
\end{equation}
Hence, combining \eqref{firstfromresidue} and \eqref{secondtermint}, we have
\begin{equation}\label{estimatesx}
\begin{aligned}
      &|S(X)|\le \frac{ d_2\cdot (u+w)^4(2\pi)^{1/2}e^{\frac{1}{6\gamma}}(\log T)^4T^{x(1-\beta)}}{e^{\pi\cdot 0.447 \log T/2}(0.447\log T)^{\beta-1+\frac{1}{2}}}\\&+ 44.66\cdot d_2\cdot (v+w)^4T^{(v+w)/2+x(\frac{1}{2}-\alpha_0)+27/164}\log^4T. 
\end{aligned}
\end{equation}
Now, it remains to estimate the quantity
\begin{equation*}
    \sum_{n=1}^{\infty}\Psi(n)\Theta(n) n^{-\rho}e^{-n \mathscr{L}^2 / U}.
\end{equation*}
Since $\Psi(n)=0$ for $2\le n\le U$, one has
\begin{equation}\label{eqsulu}
    \begin{aligned}
        &S(U/\mathscr{L}^2)=\sum_{n=1}^{\infty}\Psi(n)\Theta(n) n^{-\rho}e^{-n \mathscr{L}^2 / U}=e^{- \mathscr{L}^2 / U}+\sum_{n>U}\Psi(n)\Theta(n) n^{-\rho}e^{-n \mathscr{L}^2 / U}.
    \end{aligned}
\end{equation}
Using the trivial inequality,
\[
\left|\sum_{ n> U}\Psi(n)\Theta(n) n^{-\rho}e^{-n \mathscr{L}^2 / U}\right|\le \sum_{n > U}\Psi(n)\Theta(n)n^{-\alpha_0}e^{-n \mathscr{L}^2 / U}.
\]
Furthermore, by definition of $\Psi(n)$, one has, for $n>U$, 
\[
\Psi(n) = \frac{1}{\log (V/U)}\left(\log V\sum_{d | n}\mu(d) - \sum_{d | n}\mu(d)\log d\right) = \frac{\Lambda(n)}{\log(V/U)},
\]
and similarly, $\Theta(n)$ satisfies
\[
\Theta(n) = \sum_{e | n}\mu(e)\frac{\log(W/e)}{\log W} = \frac{\Lambda(n)}{\log W}.
\]
Hence,
\[
\Psi(n)\Theta(n) \le \frac{\log^2 n}{\log W \log (V/U)}.
\]
It follows that
  \begin{equation}\label{tailsul}
    \begin{aligned}
        &\left|\sum_{ n> U}\Psi(n)\Theta(n) n^{-\rho}e^{-n \mathscr{L}^2 / U}\right|\le \sum_{ n> U}\frac{\log^2n}{\log W(\log(V/U))}\cdot n^{-\alpha_0} e^{-n \mathscr{L}^2 / U}\\&\le \frac{3.8}{w(v-u)\log^2T}\int_{n=U}^{\infty}\log^2n\cdot  n^{-0.985}e^{-n \mathscr{L}^2 / U}\text{d}n\\&\le  \frac{3.8}{w(v-u)\log^2T}\int_{n=U}^{\infty}ne^{-n \mathscr{L}^2 / U}\text{d}n\\&\le \frac{3.8}{w(v-u)\log^2T}\frac{e^{-\mathscr{L}^2}\left(\mathscr{L}^2+1\right)}{\frac{\mathscr{L}^4}{U^2}}= \frac{3.8}{w(v-u)\log^2T}U^{2}\cdot \frac{e^{-\mathscr{L}^2}\left(\mathscr{L}^2+1\right)}{\mathscr{L}^4}\\&= \frac{3.8}{w(v-u)\log^2T}e^{-\mathscr{L}(\mathscr{L}-2u)}\cdot \frac{\left(\mathscr{L}^2+1\right)}{\mathscr{L}^4}\le 10^{-20}.
    \end{aligned}
\end{equation}
Combining \eqref{estimatesx}, \eqref{eqsulu} and \eqref{tailsul}, we can conclude that
\begin{equation}\label{constinfront}
    \begin{aligned}
        &\left|\sum_{n=1}^{\infty}\Psi(n)\Theta(n) n^{-\rho}\left(e^{-n / X}-e^{-n \mathscr{L}^2 / U}\right)\right|\\&\ge |e^{- \mathscr{L}^2 / U}|-|S(x)|-\left|\sum_{ n> U}\Psi(n)\Theta(n) n^{-\rho}e^{-n \mathscr{L}^2 / U}\right|&\ge c_1,
    \end{aligned}
\end{equation}
where $c_1=c_1(u,x,v,w,T_0,T_1)$ is a constant whose numerical values are collected in Table \ref{cconstants} in Appendix \ref{numerical}.
\end{proof}
\section{Proof of Theorem \ref{th1}}
Consider the rectangle 
\[
R(\alpha,T)=\{\sigma+it\ |\ \alpha\le\sigma\le 1,\ 0\le t\le T\}
\]
and divide $R(\alpha,T)$ into smaller rectangles:
\begin{equation}\label{rectangles}
\alpha\le \sigma\le 1,\qquad\qquad (n-1)\delta\le t\le n\delta,\qquad \delta=1-\alpha    
\end{equation}
where $n=1,\dots, T/\delta$. Furthermore, we observe that $\delta>\nu_i(T)$ by the properties of the zero-free regions mentioned before in Section \ref{background}, where $i=1,2,3,4,$ depending on the range of $T$ in which we are working. Then, for each of these smaller rectangles, we choose a representative zero arbitrarily.\\
Denote with $J$ the total number of these representative zeros. 
Our main goal is to find an upper bound for the quantity
\begin{equation}\label{principal}
    \left(\sum_{r=1}^J\left|\sum_{n=1}^\infty\Psi(n)\Theta(n) n^{-\rho}\left(e^{-n / X}-e^{-n\mathscr{L}^2/U}\right)\right|\right)^2,
\end{equation}
which, combined with Lemma \ref{lowerbounddetector} will give an upper bound for $J$.\\
The type of estimate for $J$ that we aim to find is of the form 
\begin{equation*}
    J\le c\cdot \frac{X^{2-2\alpha}}{\delta\log T},
\end{equation*}
so that, when combined with Lemma \ref{numberzerosinintervalwithno2} to get an upper bound for the total number of zeros counted by $N(\alpha,T)$, will give us a log-free zero density estimate for $N(\alpha,T)$. More precisely, by Lemma \ref{numberzerosinintervalwithno2}, we will have a final estimate of the following form:
\begin{equation*}
    N(\alpha,T)\le J \tilde{n}(\alpha)\le J (b_1\delta\log T+b_2)\le CX^{2-2\alpha}=C T^{2x(1-\alpha)},
\end{equation*}
where $X=T^x$ for a certain $x$ whose values are listed in Table \ref{uxvw} in Appendix \ref{numerical}.\\
\subsection{Estimate for $J$} In this subsection we will prove the following lemma.
\begin{lemma}\label{upperigma1}
    Under the above conditions the following estimate holds:
    \begin{equation*}
       \left(\sum_{r=1}^J\left|\sum_{n=1}^\infty\Psi(n)\Theta(n) n^{-\rho}\left(e^{-n / X}-e^{-n\mathscr{L}^2/U}\right)\right|\right)^2\le J\cdot (c_2+c_3+c_4)\cdot  \frac{ X^{2-2\alpha}}{\delta\log T}+J^2\cdot  c_5,
    \end{equation*}
    where $c_2,c_3,c_4,c_5$ are constants which depend on $\alpha_0,u,x,v,w,T_0,T_1$ and whose numerical values can be found in Table \ref{cconstants} in Appendix \ref{numerical}.
\end{lemma}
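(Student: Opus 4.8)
The plan is to run the Montgomery--Huxley zero-detection argument in explicit form, following Pintz \cite{pintz_new_2019}. Write $a_n:=\Psi(n)\Theta(n)\bigl(e^{-n/X}-e^{-n\mathscr{L}^2/U}\bigr)$ and $S_r:=\sum_{n\ge1}a_n n^{-\rho_r}$, where $\rho_r=\beta_r+i\gamma_r$, $r=1,\dots,J$, are the representative zeros, one for each of the rectangles \eqref{rectangles}; thus the $\gamma_r$ are $\delta$-separated and $J\le T/\delta$, and the object to estimate is $\bigl(\sum_{r=1}^{J}|S_r|\bigr)^2$. First I would truncate the series: the factor $e^{-n/X}$ makes the tail $n>X\mathscr{L}^2$ negligible, with an error of size $O(10^{-20})$ exactly as in \eqref{tailsul}, so up to such an error each $S_r$ is a Dirichlet polynomial supported on $n\le X\mathscr{L}^2$; recall also that $a_n=0$ for $2\le n\le U$.

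The engine is the Halász--Montgomery inequality (Theorem \ref{montineq}, first form). Realizing $S_r=(\xi,\varphi_r)$ in an inner-product space with $\varphi_{r,n}=n^{-\overline{\rho_r}}$ and a weight chosen so that $\|\xi\|$ is the corresponding weighted $L^2$-norm of $(a_n)$, one gets
\[
\Bigl(\sum_{r=1}^{J}|S_r|\Bigr)^2\le\|\xi\|^2\sum_{r,s\le J}|(\varphi_r,\varphi_s)|=\|\xi\|^2\Bigl(\sum_{r\le J}\|\varphi_r\|^2+\sum_{r\ne s}|(\varphi_r,\varphi_s)|\Bigr).
\]
The diagonal part $\|\xi\|^2\sum_r\|\varphi_r\|^2$ is linear in $J$; to extract from it the factor $X^{2-2\alpha}/(\delta\log T)$ I would bound the coefficient norm $\|\xi\|^2$ by splitting the $n$-range into $n\le W^2$, $W^2<n\le VW$ and $VW<n\le X\mathscr{L}^2$, using $|\Psi(n)\Theta(n)|\le d^2(n)$ together with Lemma \ref{michaela-tim}/Lemma \ref{d2overn} on the first two ranges, \eqref{specialcase} (from Lemma \ref{psi2caseeasy}) and Lemma \ref{ramarepreprint} for the $\Psi^2$-weighted sums, and the pointwise bound $\Psi(n)\Theta(n)\le\log^2 n/(\log W\,\log(V/U))$ from the proof of Lemma \ref{lowerbounddetector} on the longest range; the mollifier $L^2$-estimates are precisely what provide both the $\delta^{-1}$ and the $\log^{-1}T$ saving, and the three sub-ranges produce the three constants $c_2,c_3,c_4$. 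The off-diagonal part $\|\xi\|^2\sum_{r\ne s}|(\varphi_r,\varphi_s)|$, in which $(\varphi_r,\varphi_s)$ is a weighted shifted zeta-type sum with $\operatorname{Re}(\beta_r+\beta_s)\ge2\alpha>1$ (hence bounded away from the pole), is controlled using the $\delta$-separation of the $\gamma_r$ and contributes at most $J^2c_5$; already the $n=1$ term forces $c_5$ to be at least of size $\bigl(e^{-1/X}-e^{-\mathscr{L}^2/U}\bigr)^2$, which is why $U=T^u$ must be taken with $T^u$ somewhat larger than $\mathscr{L}^2$.

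I expect the delicate point -- and the reason the parameters $u,v,w,x$ and the sub-intervals $(T_0,T_1]$ have to be tuned -- to be keeping $c_5$ strictly below $c_1^2$, with $c_1$ the constant of Lemma \ref{lowerbounddetector}: only then does combining the two lemmas via $J^2c_1^2\le\bigl(\sum_{r}|S_r|\bigr)^2$ give the usable bound $J\le\frac{c_2+c_3+c_4}{c_1^2-c_5}\cdot\frac{X^{2-2\alpha}}{\delta\log T}$, which is exactly what feeds into $N(\alpha,T)\le J\,\tilde{n}(\alpha)$ through Lemma \ref{numberzerosinintervalwithno2}. Everything else -- the explicit tracking of $c_2,c_3,c_4,c_5$ in terms of $d_1,d_2,d_3,b_1,b_2$ over the various ranges of $T$ -- is routine but lengthy bookkeeping, carried out in Appendix \ref{numerical}.
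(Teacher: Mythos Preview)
Your decomposition is not the one that makes the lemma work, and the constants $c_2,c_3,c_4,c_5$ do not arise where you say they do. The paper's key move in applying Theorem~\ref{montineq} is to \emph{separate the two mollifiers}: one takes $\xi_n=\Psi(n)n^{-1/2}(e^{-n/X}-e^{-n\mathscr{L}^2/U})^{1/2}$ and $\varphi_{r,n}=\Theta(n)n^{1/2-\rho_r}(e^{-n/X}-e^{-n\mathscr{L}^2/U})^{1/2}$, so that $\|\xi\|^2=\sum_n\Psi^2(n)n^{-1}(\cdots)\le d_4$ is an absolute constant (Lemma~\ref{an2bn}), while $(\varphi_r,\varphi_s)=\sum_n\Theta^2(n)n^{1-\rho_r-\overline{\rho_s}}(\cdots)$ is rewritten by inverse Mellin transform as an integral of $\zeta(s+\rho_r+\overline{\rho_s}-1)G(s+\rho_r+\overline{\rho_s}-1)\Gamma(s)\bigl(X^s-(U/\mathscr{L}^2)^s\bigr)$ and the line is shifted to $\operatorname{Re}(s)=\tfrac32-\beta_r-\beta_s$. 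The split producing the constants is \emph{residue versus shifted integral}, not diagonal versus off-diagonal: $c_2,c_3,c_4$ all come from the residue $G(1)\Gamma(2-\rho_r-\overline{\rho_s})X^{2-\rho_r-\overline{\rho_s}}$, summed over \emph{all} pairs $(r,s)$ and split according to whether $|\gamma_r-\gamma_s|$ lies in $[0,\delta)$, $[\delta,1]$, or $(1,T]$; the exponential decay of $\Gamma$ in the imaginary direction is what collapses the double sum to something linear in $J$. The factor $1/(\delta\log T)$ is produced here by $|G(1)|\le d_5/\log W$ (Lemma~\ref{estg1}) together with $|\Gamma(2-\rho_r-\overline{\rho_s})|\lesssim 1/|2-\beta_r-\beta_s|\le 1/(2\delta)$ on the near-diagonal range --- it does not come from mollifier $L^2$-estimates, which contain no $\delta$. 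The constant $c_5$ bounds the integral on the shifted line for every pair $(r,s)$; its smallness is driven by $X^{3/2-\beta_r-\beta_s}\cdot W$ being tiny once $\alpha_0>3/4$ and $w$ is small enough.

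With your choice $\varphi_{r,n}=n^{-\overline{\rho_r}}$ and all of $a_n=\Psi(n)\Theta(n)(\cdots)$ placed in $\xi$, the argument breaks in two places. First, $\|\xi\|^2=\sum_n\Psi^2(n)\Theta^2(n)(\cdots)^2$ has no $1/n$ weight and is of size roughly $X\log^{C}T$ (even crudely via $d^4(n)$), so the diagonal term cannot possibly be $\ll J\,X^{2-2\alpha}/(\delta\log T)$; there is no mechanism in your $n$-range splitting that manufactures either $X^{2-2\alpha}$ or $1/\delta$. Second, without $\Theta$ inside $\varphi_r$ the inner product $(\varphi_r,\varphi_s)$ is just a partial zeta-sum at abscissa $\beta_r+\beta_s$; you lose the $G$-factor entirely and with it both the $1/\log W$ saving and the contour-shift damping that makes $c_5<c_1^2$ achievable. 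Your remark about the $n=1$ term constraining $c_5$ is a red herring: in the paper $c_5$ has nothing to do with any single $n$, and the smallness of $e^{-1/X}-e^{-\mathscr{L}^2/U}$ plays no role in bounding it.
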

First of  all, we observe that, since $X>U/\mathscr{L}^2$,
\[0<(e^{-n/X}-e^{-n(\mathscr{L}^2/U)})<1.
\]
Applying Lemma \ref{montineq} to \eqref{principal} with
   \[
  \xi_n=\Psi(n)(e^{-n/X}-e^{-n(\mathscr{L}^2/U)})^{1/2}n^{-1/2}  \quad \forall n=1,2,\dots
  \]
and

    \[
\varphi_{r}=\{\varphi_{r,n}\}_{n=1}^{\infty},\quad \varphi_{r,n}=\Theta(n)n^{1/2-\sigma_r-it_r}(e^{-n/X}-e^{-n(\mathscr{L}^2/U)})^{1/2}\qquad \forall n=1,2,\dots,
\]
it follows that \eqref{principal} is 
\begin{equation}\label{afterhalaszmont}
 \begin{aligned}
    &\le \left(\sum_{n=1}^{\infty}\frac{\Psi^2(n)}{n}(e^{-n/X}-e^{-n(\mathscr{L}^2/U)})\right)\left(\sum_{r,s\le J}\left|\sum_{n=1}^\infty\Theta^2(n)n^{1-\rho_r-\overline{\rho_k}}(e^{-n/X}-e^{-n(\mathscr{L}^2/U)})\right|\right).
\end{aligned}
\end{equation}
We will estimate the two factors separately. About the first factor, we prove the following result.
\begin{lemma}\label{an2bn}
    With the same notations as before, the following estimate holds:
    \[
     \sum_{n=1}^{\infty}\frac{\Psi^2(n)}{n}(e^{-n/X}-e^{-n(\mathscr{L}^2/U)})\le d_4,
    \]
    where $d_4=d_4(\alpha_0,u,x,v,w,T_0,T_1)$ and numerical values can be found in Table \ref{dconstants} in Appendix \ref{numerical}.   
\end{lemma}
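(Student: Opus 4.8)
The plan is to throw away the second exponential except at the single point $n=1$, thereby reducing the claim to an estimate for $\Sigma:=\sum_{n>U}\Psi^2(n)n^{-1}e^{-n/X}$, and then to bound $\Sigma$ by partial summation fed with the Ramar\'e--Zuniga-Altmerman bound (Lemma \ref{ramarepreprint}). First I would record that, since $X>U/\mathscr{L}^2$, every summand is nonnegative and $e^{-n/X}-e^{-n\mathscr{L}^2/U}\le e^{-n/X}$, and that $\Psi(n)=0$ for $2\le n\le U$, so only $n=1$ and $n>U$ contribute. At $n=1$ I would \emph{keep} the subtraction and use $1-e^{-y}\le y$ to get $e^{-1/X}-e^{-\mathscr{L}^2/U}\le \mathscr{L}^2/U=(\log T)^2T^{-u}$, which is negligible for $T\ge T_0\ge 3\cdot10^{12}$ (and, since $2\log T<u(\log T)^2$ on the relevant range, it is monotone decreasing in $T$, hence bounded by its value at $T_0$). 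This leaves $\Sigma$.

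Next, I would write $e^{-n/X}=\int_{n/X}^{\infty}e^{-s}\,\mathrm{d}s$ and interchange the sum and the integral, obtaining
\[
\Sigma=\int_{0}^{\infty}e^{-s}\Big(\sum_{U<n\le Xs}\frac{\Psi^2(n)}{n}\Big)\,\mathrm{d}s .
\]
I would then apply Lemma \ref{ramarepreprint} with $z_1=U=T^u$ and $z_2=V=T^v$ (so $t=v/u$ and $\log(z_2/z_1)=(v-u)\mathscr{L}$), which gives
\[
\sum_{n\le Xs}\frac{\Psi^2(n)}{n}\le \frac{K_1}{\mathscr{L}}\log(Xs)\quad\text{whenever }Xs\ge U,\qquad K_1=\frac{3.09}{v-u}\cdot\frac{1.084\!\left(\tfrac{v}{u}+1\right)+1.301\!\left(1+\tfrac{v^2}{u^2}\right)-0.116}{\tfrac{v}{u}-1},
\]
the inner sum being empty when $Xs<U$. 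The only hypothesis to check is $U=T^u\ge100$, which is read off Table \ref{uxvw}. Hence $\Sigma\le \frac{K_1}{\mathscr{L}}\int_{U/X}^{\infty}e^{-s}\log(Xs)\,\mathrm{d}s$.

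Finally I would split $\log(Xs)=\log X+\log s$: the first piece gives $\int_{U/X}^{\infty}e^{-s}\log X\,\mathrm{d}s=\log X\cdot e^{-U/X}\le\log X=x\mathscr{L}$, and since $\log s\le 0$ on $(0,1]$ and $\log s\le s-1$ on $[1,\infty)$, the second piece gives $\int_{U/X}^{\infty}e^{-s}\log s\,\mathrm{d}s\le\int_{1}^{\infty}e^{-s}(s-1)\,\mathrm{d}s=e^{-1}$. Therefore $\Sigma\le K_1x+\frac{K_1}{e\mathscr{L}}\le K_1x+\frac{K_1}{e\log T_0}$, and adding back the $n=1$ term yields the statement with
\[
d_4=K_1x+\frac{K_1}{e\log T_0}+(\log T_0)^2T_0^{-u},
\]
up to substitution of the tabulated values of $u,v,x,T_0$. (In the special case $v=2u$ one may instead invoke the cleaner form $\sum_{n\le N}\Psi^2(n)/n\le 30\log N/\log U$ of Lemma \ref{ramarepreprint}, i.e. $K_1=30/u$.)

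I do not expect a genuine obstacle here: the argument is essentially bookkeeping of explicit constants. The points that need care are (i) the vanishing of the boundary terms in the partial summation, which is immediate because $\sum_{n\le N}\Psi^2(n)/n=O(\log N)$ by Lemma \ref{ramarepreprint}; (ii) confirming, from Table \ref{uxvw}, that $U=T^u\ge100$ on each subinterval $(T_0,T_1]$, which is what licenses the use of Lemma \ref{ramarepreprint} throughout $s\ge U/X$; and (iii) checking numerically that the two $T$-dependent remainder terms $\frac{K_1}{e\log T_0}$ and $(\log T_0)^2T_0^{-u}$ are as small as claimed for $T\ge 3\cdot10^{12}$. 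If one wanted a sharper $K_1$, one could estimate the far tail $n>UV$ using \eqref{specialcase} and apply Lemma \ref{ramarepreprint} only on $U<n\le UV$, but this refinement is not needed for Theorem \ref{th1}.
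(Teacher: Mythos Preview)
Your proof is correct. Both your argument and the paper's rest on Lemma~\ref{ramarepreprint}, but the handling of the exponential weight differs. The paper bounds $e^{-n/X}-e^{-n\mathscr{L}^2/U}\le e^{-n/X}\le 1$ on the range $U<n\le UV$ and applies Lemma~\ref{ramarepreprint} once with $N=UV$, then treats the tail $n>UV$ by partial summation against the bound \eqref{specialcase} for $\sum_{n\le N}\Psi^2(n)$. Your integral representation $e^{-n/X}=\int_{n/X}^\infty e^{-s}\,\mathrm{d}s$ followed by Fubini is slicker: it avoids the split at $UV$ and the appeal to \eqref{specialcase} entirely, at the price of a main term $K_1x$ in place of the paper's $K_1(u+v)$ (recall $x>u+v$ throughout Table~\ref{uxvw}), so your constant would come out slightly larger than the tabulated $d_4$. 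You correctly anticipate this trade-off in your closing remark, and the refinement you sketch there---restricting Lemma~\ref{ramarepreprint} to $U<n\le UV$ and handling $n>UV$ via \eqref{specialcase}---is exactly what the paper does. Your treatment of the $n=1$ term, keeping the subtraction and bounding by $\mathscr{L}^2/U$, is sharper than the paper's, which simply bounds it by~$1$.
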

\begin{proof}
    First, we observe that by our assumptions
\begin{equation*}
    \begin{aligned}
         \sum_{n=1}^{\infty}\frac{\Psi^2(n)}{n}(e^{-n/X}-e^{-n(\mathscr{L}^2/U)})\le \sum_{n=1}^{\infty} \frac{\Psi^2(n)}{n}e^{-n / X}<\sum_{n=1}^{\infty} \frac{\Psi^2(n)}{n}.
    \end{aligned}
\end{equation*}
Since $\Psi(n)=0$ for $2\le n\le U$, we have
    \[
    \sum_{n=1}^{\infty}\frac{\Psi^2(n)}{n}e^{-n/X}\le 1+\sum_{n=U}^{UV}\frac{\Psi^2(n)}{n}e^{-n/X}+\sum_{n=UV}^{\infty}\frac{\Psi^2(n)}{n}e^{-n/X}.
    \]
    We estimate the two quantities separately.\\
    Regarding the first term, by Lemma \ref{ramarepreprint} with $z_1=U=T^u$, $z_2=V=T^v$, $t=v/u$, one has
    \begin{equation*}
        \begin{aligned}
         &\sum_{n=U}^{UV}\frac{\Psi^2(n)}{n}e^{-n/X}\le \sum_{n=U}^{UV}\frac{\Psi^2(n)}{n}\\&\le3.09\cdot \frac{\log(UV)}{\log(V/U)}\cdot \frac{1.301((v/u)^2+1)+1.084((v/u)+1)-0.116}{(v/u)-1}\\&\le 3.09\cdot \frac{u+v}{v-u}\cdot \frac{1.301((v/u)^2+1)+1.084((v/u)+1)-0.116}{(v/u)-1}.
        \end{aligned}
    \end{equation*}
For the second term,
    \[
    \sum_{n=UV}^{\infty}\frac{\Psi^2(n)}{n}e^{-n/X}=\lim_{N\rightarrow \infty}\sum_{n=z_1z_2}^{N}\frac{\Psi^2(n)}{n}e^{-n/X},
    \]
    and, by \eqref{specialcase},
\begin{equation*}
    \begin{aligned}
        &\sum_{n=UV}^{N}\frac{\Psi^2(n)}{n}e^{-n/X}\\&=\frac{1}{Ne^{N/X}}\sum_{n\le N}\Psi^2(n)-\frac{1}{UVe^{UV/X}}\sum_{n\le UV}\Psi^2(n)+\int_{UV}^N\left(\sum_{1\le n\le u}\Psi^2(u)\right)\left(\frac{e^{-u/X}}{Xu}+\frac{e^{-u/X}}{u^2}\right)\text{d}u\\&\le \frac{d_3}{e^{N/X}\log T}+\frac{d_3}{e^{UV/X}\log T}+\frac{d_3}{\log T}\int_{UV}^N\left(\frac{e^{-u/X}}{X}+\frac{e^{-u/X}}{u}\right)\text{d}u.
    \end{aligned}
\end{equation*}
    Now,
    \[
    \int_{UV}^\infty \frac{e^{-u/X}}{X}\le e^{-UV/X}\le 1.
    \]
    Then,
    \begin{equation*}
        \begin{aligned}
            &\int_{UV}^\infty\frac{e^{-u/X}}{u}=\int_{UV}^X\frac{e^{-u/X}}{u}+\int_{X}^\infty\frac{e^{-u/X}}{u}\le \log(X/UV)+1\le 1+(x-(u+v))\log T.
        \end{aligned}
    \end{equation*}
    Hence, for $N\rightarrow\infty$, we have
    \begin{equation*}
         \sum_{n=UV}^{\infty}\frac{\Psi^2(n)}{n}e^{-n/X}\le \frac{d_3}{e^{UV/X}\log T }+\frac{d_3}{\log T}\left(2+(x-(u+v)\log T)\right).
    \end{equation*}
It follows that
\begin{equation*}
    \begin{aligned}
        & \sum_{n=1}^{\infty}\frac{\Psi^2(n)}{n}(e^{-n/X}-e^{-n(\mathscr{L}^2/U)})\\&\le  3.09\cdot \frac{u+v}{v-u}\cdot \frac{1.301((v/u)^2+1)+1.084((v/u)+1)-0.116}{(v/u)-1}\\&\ \ +\frac{d_3}{e^{UV/X}\log T}+\frac{d_3}{\log T}\left(2+(x-(u+v)\log T)\right)\le d_4,
    \end{aligned}
\end{equation*}
where $d_4$ is a constant which depends on $\alpha_0,u,x,v,w,T_0,T_1$ and whose numerical values can be found in Table \ref{dconstants} in Appendix \ref{numerical}.
\end{proof}
Now, we deal with the second factor of \eqref{afterhalaszmont}, that is,
\[
\sum_{r,s\le J}\left|\sum_{n=1}^\infty\Theta^2(n)n^{1-\rho_r-\overline{\rho_k}}(e^{-n/X}-e^{-n\mathcal{L}^2/U})\right|.
\]
We have
\begin{equation*}
    \begin{aligned}
     &\sum_{n=1}^\infty\Theta^2(n)n^{1-\rho_r-\overline{\rho_s}}(e^{-n/X}-e^{-n\mathcal{L}^2/U})\\&=\frac{1}{2\pi i}\int_{2-i\infty}^{2+i\infty}\zeta\left(s+\rho_{r}+\overline{\rho_s}-1\right)G\left(s+\rho_r+\overline{\rho_s}-1\right)\Gamma(s)\left(X^s-\left(\frac{U}{\mathcal{L}^2}\right)^s\right)\text{d}s,   
    \end{aligned}
\end{equation*}
where
\begin{equation}\label{defGs}
G(s)=\sum_{\substack{j \leq W, k \leq W}} \theta_j \theta_k[j, k]^{-s}.
\end{equation}
Moving the line of integration to $\operatorname{Re}(s)=\frac{3}{2}-\beta_r-\beta_s$, we get
\begin{equation}\label{afterresidueth}
    \begin{aligned}
          &\frac{1}{2\pi i}\int_{2-i\infty}^{2+\infty}\zeta\left(s+\rho_{r}+\overline{\rho_s}-1\right)G\left(s+\rho_r+\overline{\rho_s}-1\right)\Gamma(s)\left(X^s-\left(\frac{U}{\mathcal{L}^2}\right)^s\right)\text{d}s\\&=G(1)\Gamma\left(2-\rho_r-\overline{\rho_s}\right)\left(X^{2-\rho_r-\overline{\rho_s}}-\left(\frac{U}{\mathcal{L}^2}\right)^{2-\rho_r-\overline{\rho_s}}\right)\\&+\frac{1}{2\pi i}\int_{-\infty}^{+\infty}\left(\zeta\left(\frac{1}{2}+i(\gamma_r-\gamma_s+t)\right)G\left(\frac{1}{2}+i(\gamma_r-\gamma_s+t)\right)\right.\\&\quad\left.\times \Gamma\left(\frac{3}{2}-\beta_r-\beta_s+it\right)\left(X^{\frac{3}{2}-\beta_r-\beta_s+it}-\left(\frac{U}{\mathcal{L}^2}\right)^{\frac{3}{2}-\beta_r-\beta_s+it}\right)\right)\text{d}t.
    \end{aligned}
\end{equation} 
\subsubsection{Contribution from residues}
We start estimating the overall contribution coming from the residues in \eqref{afterresidueth}, that is,
\begin{equation}\label{contrresidue}
\begin{aligned}
      &\sum_{r,s\le J}\left|G(1)\Gamma(2-\rho_r-\overline{\rho_s})\left(X^{2-\rho_r-\overline{\rho_s}}-\left(\frac{U}{\mathcal{L}^2}\right)^{2-\rho_r-\overline{\rho_s}}\right)\right|\\&\le 2|G(1)|X^{2-2\alpha}\sum_{r,s\le J}\left|\Gamma(2-\rho_r-\overline{\rho_s})\right|.
\end{aligned} 
\end{equation}
The following estimate holds for $|G(1)|$.
\begin{lemma}\label{estg1}
    Given $G(s)$ defined as in \eqref{defGs}, the following estimate holds:
    \[
    |G(1)|\le \frac{d_5}{\log W},
    \]
    where $d_5=d_5(u,x,v,w,T_0,T_1)$ and the numerical values can be found in Table \ref{dconstants} in Appendix \ref{numerical}.
\end{lemma}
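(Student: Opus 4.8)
The plan is to recognize $G(1)$ as a renormalized copy of the ``main term'' sum that was already bounded inside the proof of Lemma~\ref{grahamz1z2small}. Writing $\theta_d=\Lambda_1(d)/\log W$ with the choice $z_1=W$, we have
\[
G(1)=\sum_{j\le W,\,k\le W}\theta_j\theta_k[j,k]^{-1}=\frac{1}{\log^2 W}\sum_{j\le W,\,k\le W}\frac{\Lambda_1(j)\Lambda_1(k)}{[j,k]},
\]
so it suffices to bound $\sum_{j,k\le W}\Lambda_1(j)\Lambda_1(k)/[j,k]$, which is exactly the quantity estimated in \eqref{mainterm} in the special case $z_1=z_2=W$.

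Concretely, I would repeat the computation of \eqref{mainterm} with $z_1=z_2=W$: using $[j,k]^{-1}=(jk)^{-1}\sum_{r\mid(j,k)}\varphi(r)$, unfolding the sum over $r$, and writing $j=rm$, $k=rn$ (so that $\Lambda_1(rm)=\mu(r)\mu(m)\log(W/(rm))$ when $(m,r)=1$ and $rm$ is squarefree, and vanishes otherwise), one gets
\[
\sum_{j,k\le W}\frac{\Lambda_1(j)\Lambda_1(k)}{[j,k]}
=\sum_{r\le W}\frac{\mu^2(r)\varphi(r)}{r^2}\Bigg(\sum_{\substack{m\le W/r\\(m,r)=1}}\frac{\mu(m)}{m}\log\frac{W}{rm}\Bigg)^{2}.
\]
By Lemma~\ref{rammulog} each inner sum lies in $[0,\,1.00303\,r/\varphi(r)]$; in particular the right-hand side is a sum of nonnegative terms, so $G(1)\ge 0$ and $|G(1)|=G(1)$. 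Bounding the square by $(1.00303)^2(r/\varphi(r))^2$ and then applying Lemma~\ref{estwithphi} together with $c_0<1.333$ yields
\[
\sum_{j,k\le W}\frac{\Lambda_1(j)\Lambda_1(k)}{[j,k]}
\le (1.00303)^2\sum_{r\le W}\frac{\mu^2(r)}{\varphi(r)}
\le 1.0061\left(\log W+1.333+\frac{11}{\sqrt{W\log W}}\right).
\]

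Dividing by $\log^2 W$ then gives
\[
|G(1)|\le\frac{1}{\log W}\left(1.0061+\frac{1.0061\cdot 1.333}{\log W}+\frac{1.0061\cdot 11}{(\log W)\sqrt{W\log W}}\right),
\]
and since $W=T^w$ with $T\ge T_0\ge 3\cdot 10^{12}$ we have $\log W\ge w\log T_0$ and $W\ge T_0^{\,w}$, so the last two terms in the bracket are dominated by explicit constants; this produces $|G(1)|\le d_5/\log W$ with
\[
d_5=1.0061\left(1+\frac{1.333}{w\log T_0}+\frac{11}{(w\log T_0)^{3/2}\,T_0^{\,w/2}}\right),
\]
evaluated in each range $(T_0,T_1]$ as in Table~\ref{dconstants}. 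There is essentially no real obstacle here: the only substantive observation is that the double sum defining $G(1)$ is precisely the main term of Lemma~\ref{grahamz1z2small} and that it is manifestly nonnegative, so passing to $|G(1)|$ costs nothing; everything else is the bookkeeping needed to make $d_5$ explicit and uniform over each $T$-interval.
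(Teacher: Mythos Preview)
Your proof is correct and follows essentially the same route as the paper: both recognize $G(1)$ as $(\log W)^{-2}$ times the main-term sum from Lemma~\ref{grahamz1z2small} with $z_1=z_2=W$, decompose via $[j,k]^{-1}=(jk)^{-1}\sum_{r\mid(j,k)}\varphi(r)$, square the bound from Lemma~\ref{rammulog}, and then bound $\sum_{r\le W}\mu^2(r)/\varphi(r)$. The only cosmetic difference is that the paper invokes Lemma~\ref{ramaregd} (error term $3.95/\sqrt{W}$) rather than Lemma~\ref{estwithphi} (error term $11/\sqrt{W\log W}$), which changes the expression for $d_5$ in a numerically negligible way; your explicit justification that $G(1)\ge 0$ is a small clarification the paper leaves implicit.
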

\begin{proof}
By definition of $G(s)$, we have
    \[
G(1)=\sum_{\substack{j \leq W, k \leq W}} \theta_j \theta_k[j, k]^{-1}.
\]
Following Graham's argument \cite{GRAHAM197883}, by Lemma \ref{rammulog} and Lemma \ref{ramaregd}, one has
\begin{equation*}
\begin{aligned}
&\sum_{j, k\le W} \frac{\theta_j \theta_k}{[j, k]} \\&
\quad=\sum_{l,k\le W} \frac{\theta_j \theta_k}{jk} \sum_{r \mid(j, k)} \varphi(r) \\&
\quad=\frac{1}{(\log W)^2}\sum_{r \leq  W} \frac{\mu^2(r) \varphi(r)}{r^2}\left(\sum_{\substack{m \leq  W / r \\
(m, r)=1}} \frac{\mu(m)}{m} \log \left(\frac{W}{m r}\right)\right)\left(\sum_{\substack{n \leq  W / r \\
(n, r)=1}} \frac{\mu(n)}{n} \log \left(\frac{W}{n r}\right)\right)\\&\le \frac{1}{(\log W)^2}\sum_{r \leq  W} \frac{\mu^2(r) \varphi(r)}{r^2} \cdot (1.00303)^2\cdot \frac{r^2}{\varphi^2(r)}\\&\le \frac{1}{(\log W)^2}1.0061 \sum_{r \leq  W} \frac{\mu^2(r)}{ \varphi(r)}\\&\le \frac{1}{(\log W)^2}1.0061\left(\log W+ 1.333+\frac{3.95}{\sqrt{W}}\right)\\&\le \frac{d_5}{(\log W)},
\end{aligned}
\end{equation*}
where $d_5=d_5(u,x,v,w,T_0,T_1)$.
\end{proof}
By Lemma \ref{estg1} it follows that \eqref{contrresidue} is
\begin{equation}\label{sumrs}
    \le \frac{ 2d_5}{w\log T}\cdot X^{2-2\alpha}\sum_{r,s\le J}\left|\Gamma\left(2-\rho_r-\overline{\rho_k}\right)\right|.
\end{equation}
    Since
\begin{align*}
    \Gamma(z)=\frac{1}{z} \int_0^{\infty} t^z e^{-t} d t,
\end{align*}
one has
\begin{equation}\label{asymboundgamma}
|\Gamma(z)|\le \frac{1}{|z|} \left|\int_0^{\infty} t^z e^{-t} d t\right|\le \frac{1}{|z|}\int_{0}^\infty t^{x}e^{-t}\text{d}t\le \frac{1}{|z|}\cdot 1.0067, 
\end{equation}
where we used the fact that, as before, $0<x\le 2-2\alpha\le 2-2\alpha_0$, $\alpha_0\ge  0.985$.\\
Then, we consider three  separate cases.
\begin{itemize}
\item When $|\gamma_r-\gamma_s|<\delta$, we have
\begin{equation*}
\begin{aligned}
      &\left|\Gamma\left(2-\rho_r-\overline{\rho_k}\right)X^{2-\rho_r-\overline{\rho_k}}\right|\le X^{2-\beta_j-\beta_k}\cdot \frac{1.0067}{|2-\beta_j-\beta_k|}\\&\le 1.0067\cdot X^{2-2\alpha}\cdot \frac{1}{2\delta} ,
\end{aligned}
\end{equation*}
 being the function $X^{2-y}/(2-y)$ decreasing for $y>1/2$. Since by our assumptions in \eqref{rectangles} for every fixed $\gamma_r$ there is a unique representative zero with imaginary part $\gamma_s$ such that $|\gamma_r-\gamma_s|<\delta$, \eqref{sumrs} restricted to the case $|\gamma_r-\gamma_s|<\delta$  becomes
\begin{equation}\label{singtermledelta}
    \le J\cdot \frac{1.0067\cdot d_5}{w} \frac{X^{2-2\alpha}}{\delta \log T}.
\end{equation}
Multiplying \eqref{singtermledelta} by the bound for the first factor in \eqref{afterhalaszmont} found in Lemma \ref{an2bn}, we have
\begin{equation}\label{boundc2}
     J\cdot c_2 \cdot \frac{X^{2-2\alpha}}{\delta \log T},
\end{equation}
where
\[
c_2=   \frac{1.0067\cdot d_4\cdot d_5}{w}
\]
is a constant depending on $\alpha_0,u,x,v,w,T_0,T_1$ whose numerical values can be found in Table \ref{cconstants} in Appendix \ref{numerical}.
\item Now, we consider the case $\delta<|\gamma_r-\gamma_s|<2$ . By the construction \eqref{rectangles}, for every fixed $\gamma_r$ and for each $n=1\cdots, \delta^{-1}-1$ there is a unique representative zero with imaginary part $\gamma_s$ such that $n\delta\le |\gamma_r-\gamma_s|\le (n+1)\delta$. Hence, by \eqref{asymboundgamma}, \eqref{sumrs} restricted to the case $\delta\le |\gamma_r-\gamma_s|\le 1$ is
\begin{equation}\label{singbet1anddelta}
\begin{aligned}
    &\frac{ 2d_5}{w\log T}\sum_{r\ne s\le J} \left|\Gamma\left(2-\rho_r-\overline{\rho_k}\right)X^{2-\rho_r-\overline{\rho_k}}\right|\\&\le \frac{ 2d_5}{w\log T}\cdot X^{2-2\alpha}\cdot \sum_{r=1}^J\sum_{\delta\le|\gamma_r-\gamma_s|\le 1}\left|\Gamma\left(2-\rho_r-\overline{\rho_k}\right)\right|\\&\le  J\cdot \frac{X^{2-2\alpha}}{\delta\log T}\frac{ 2\cdot d_5\cdot 1.0067}{w} \sum_{n=1}^{\frac{1}{\delta}-1}\frac{1}{n}\\&\le J\cdot \frac{X^{2-2\alpha}}{\delta\log T}\cdot \frac{ 2\cdot d_5\cdot 1.0067}{w}\cdot 1.12\cdot \log((\nu_i(T_1))^{-1}),
\end{aligned}
\end{equation}
where we used the fact that $\delta>\nu_i(T_1)$ ($i=1,2,3$ or $4$ depending on the range $(T_0,T_1]$). Again, multiplying \eqref{singbet1anddelta} by the bound for the first factor in \eqref{afterhalaszmont} given in Lemma \ref{an2bn}, we get
\begin{equation}\label{boundc3}
    J\cdot c_3\cdot \frac{X^{2-2\alpha}}{\delta\log T},
\end{equation}
where 
\[
c_3=\frac{ 2\cdot d_4\cdot d_5\cdot 1.0067\cdot 1.12}{w}\cdot \log(\nu_i(T_1))^{-1}
\]
is always a constant depending on $\alpha_0,u,x,v,w,T_0,T_1$ whose numerical values can be found in Table \ref{cconstants} in Appendix \ref{numerical}.
\item  If $1<|\gamma_r-\gamma_s|\le T$, similarly to what we said for the previous case, by the construction \eqref{rectangles}, for every fixed $\gamma_r$ and for each $n=\delta^{-1},\dots,T\delta^{-1}-1$, there is a unique representative zero with imaginary part $\gamma_s$ such that $n\delta\le |\gamma_r-\gamma_s|\le (n+1)\delta$. Hence, by \eqref{asymboundgamma}, \eqref{sumrs} restricted to the case $1< |\gamma_r-\gamma_s|\le T$ is an application of Lemma \ref{stirling} gives
 \begin{equation}\label{singdeltaT}
        \begin{aligned}
        &\frac{2d_5}{w\log T}\sum_{\substack{r\ne s\le J\\1<|\gamma_r-\gamma_s|\le T}} \left|\Gamma\left(2-\rho_r-\overline{\rho_k}\right)X^{2-\rho_r-\overline{\rho_k}}\right|\\
            &\le \frac{2d_5}{w\log T}\cdot X^{2-2\alpha}\sum_{r=1}^J\sum_{1<|\gamma_r-\gamma_s|\le T}\left|\Gamma\left(2-\rho_r-\overline{\rho_k}\right)\right|\\&\le  \frac{2d_5}{w\log T}\cdot J\cdot X^{2-2\alpha}\cdot (2\pi)^{1/2}\cdot 2.4\cdot \int_{1/\delta}^{T/\delta}e^{1/6(n\delta)}e(n\delta)^{1.5-2\cdot\alpha}\cdot e^{-\pi n\delta/2}\text{d}n\\&\le  J\cdot \frac{X^{2-2\alpha}}{\delta \log T}\frac{2\cdot 2.4\cdot d_5\cdot (2\pi)^{1/2}}{w} \int_{1}^{T}e^{1/6y}y^{1.5-2\cdot\alpha_0}\cdot e^{-\pi y/2}\text{d}y.
        \end{aligned}
    \end{equation}
    As before, multiplying \eqref{singdeltaT} by the quantity $d_4$ found in Lemma \ref{an2bn} we have
    \begin{equation}\label{boundc4}
         J\cdot c_4\cdot \frac{X^{2-2\alpha}}{\delta\log T},
    \end{equation}
where
    \[
    c_4=\frac{2\cdot (2\pi)^{1/2}\cdot 2.4\cdot d_5\cdot d_4}{w}\int_{1}^{T}e^{1/6y}y^{1.5-2\cdot\alpha_0}\cdot e^{-\pi y/2}\text{d}y.
    \]
\end{itemize}
Summing \eqref{boundc2},\eqref{boundc3},\eqref{boundc4} together we can conclude that \eqref{sumrs} becomes
\begin{equation*}
\le  J\cdot (c_2+c_3+c_4)\cdot \frac{X^{2-2\alpha}}{\delta\log T}.
\end{equation*}

\subsubsection{Contribution from integral}
At this point, it remains to study the contribution given by the integral in \eqref{afterresidueth}, that is
\begin{equation*}
    \begin{aligned}
        &\frac{1}{2\pi i}\int_{-\infty}^{+\infty}\left(\zeta\left(\frac{1}{2}+i(\gamma_r-\gamma_s+t)\right)G\left(\frac{1}{2}+i(\gamma_r-\gamma_s+t)\right)\right.\\&\left.\quad\times\Gamma\left(\frac{3}{2}-\beta_r-\beta_s+it\right)\left(X^{\frac{3}{2}-\beta_r-\beta_s+it}-\left(\frac{U}{\mathcal{L}^2}\right)^{\frac{3}{2}-\beta_r-\beta_s+it}\right)\right)\text{d}t.
    \end{aligned}
\end{equation*}
First, we observe that \begin{equation*}
    \left|G\left(\frac{1}{2}+i(\gamma_j-\gamma_k+t)\right)\right| \le \sum_{\substack{j \leq W, k \leq W}} \theta_j \theta_k[j, k]^{-1/2}\le (W^2)^{1/2}\cdot |G(1)|\le  \frac{d_5\cdot W}{\log W},
\end{equation*}
where in the last inequality we used Lemma \ref{estg1}.
Hence,
\begin{equation}\label{remainderrhs}
    \begin{aligned}
       &\left|\frac{1}{2\pi i}\int_{-\infty}^{+\infty}\left(\zeta\left(\frac{1}{2}+i(\gamma_r-\gamma_s+t)\right)G\left(\frac{1}{2}+i(\gamma_r-\gamma_s+t)\right)\right.\right.\\&\left.\left.\quad\times \Gamma\left(\frac{3}{2}-\beta_r-\beta_s+it\right)\left(X^{\frac{3}{2}-\beta_r-\beta_s+it}-\left(\frac{U}{\mathcal{L}^2}\right)^{\frac{3}{2}-\beta_r-\beta_s+it}\right)\right)\text{d}t\right|\\&\le \frac{ d_5}{2\pi}\cdot \frac{W}{\log W}\cdot\left(X^{\frac{3}{2}-\beta_r-\beta_s}+\left(\frac{U}{\mathcal{L}^2}\right)^{\frac{3}{2}-\beta_r-\beta_s}\right)\\&\times \int_{-\infty}^{\infty}\left|\zeta\left(\frac{1}{2}+i(\gamma_r-\gamma_s+t)\right)\right|\left|\Gamma\left(\frac{3}{2}-\beta_r-\beta_s+it\right)\right|\text{d}t.
    \end{aligned}
\end{equation}
It remains to estimate the integral 
\begin{equation}\label{singleintegral}
    \int_{-\infty}^{\infty}\left|\zeta\left(\frac{1}{2}+i(\gamma_r-\gamma_s+t)\right)\right|\left|\Gamma\left(\frac{3}{2}-\beta_j-\beta_k+it\right)\right|\text{d}t.
\end{equation}
We denote $\gamma_r-\gamma_s=:a.$  Furthermore, we just consider the case $0\le a\le T$, since the same contribution is given when $a<0$ after a change of  variable. Hence, from now on, we will work with $a\ge0$. We split \eqref{singleintegral} as
\begin{equation*}
     \int_{-\infty}^{\infty}\left|\zeta\left(\frac{1}{2}+i(\gamma_r-\gamma_s+t)\right)\right|\left|\Gamma\left(\frac{3}{2}-\beta_j-\beta_k+it\right)\right|\text{d}t=\int_{-\infty}^{-3-a}+\int_{-3-a}^{3-a}+\int_{3-a}^{+\infty}
\end{equation*}
and we estimate each term separately.\\
For the first term, similarly to what we did for \eqref{firstint}, an application of Lemma \ref{onehalf} and Lemma \ref{stirling} gives
\begin{equation}\label{firstintrhs}
 \begin{aligned}
     &   J_1=\int_{- \infty}^{-3-a} \left|\zeta\left(\frac{1}{2}+i(\gamma_r-\gamma_s+t)\right)\right|\left|\Gamma\left(\frac{3}{2}-\beta_r-\beta_s+it\right)\right|\text{d} t\\&\le (2\pi)^{1/2}\cdot e^{1/6(2\alpha_0-1.5)} \int_{-\infty}^{-3-a}\left|\zeta\left(\frac{1}{2}+ia+it\right)\right||t|^{1-\beta_r-\beta_s}e^{-\frac{\pi}{2}|t|}dt \\&\le (2\pi)^{1/2}\cdot  e^{1/6(2\alpha_0-1.5)}\cdot 66.7 \int_{-\infty}^{-3-a}|t+a|^{ 27/164}|t|^{1-\beta_r-\beta_s}e^{-\frac{\pi}{2}|t|}dt\\&\le (2\pi)^{1/2}\cdot  e^{1/6(2\alpha_0-1.5)}\cdot 66.7 \int_{-\infty}^{-3-a}\left|2t\right|^{ 27/164}|t|^{1-\beta_r-\beta_s}e^{-\frac{\pi}{2}|t|}dt\\&\le (2\pi)^{1/2}\cdot  e^{1/6(2\alpha_0-1.5)}\cdot 66.7 \int_{3+a}^{+\infty}\left|2t\right|^{ 27/164}|t|^{1-\beta_r-\beta_s}e^{-\frac{\pi}{2}|t|}dt\\&\le (2\pi)^{1/2}\cdot  e^{1/6(2\alpha_0-1.5)}\cdot 66.7 \int_{3}^{+\infty}(2t)^{ 27/164}t^{1-\alpha_0-\alpha_0}e^{-\frac{\pi}{2}t}dt\le 0.24113,
    \end{aligned}
\end{equation}
where we used $\beta\ge\alpha\ge\alpha_0$.\\
Then, an application of Lemma \ref{onehalf} gives
\begin{equation}\label{secondintrhs}
    \begin{aligned}
     &J_2=\int_{-3-a}^{3-a} \left|\zeta\left(\frac{1}{2}+i(\gamma_r-\gamma_s+t)\right)\right|\left|\Gamma\left(\frac{3}{2}-\beta_r-\beta_s+it\right)\right|\text{d} t\\&\le  1.461\int_{-\infty}^{3}\left|\Gamma\left(\frac{3}{2}-\beta_r-\beta_s+it\right)\right|\text{d} t\le 1.461\cdot 4.05206\le 5.921.
    \end{aligned}
\end{equation}
Now, we consider
\[
\int_{3-a}^{+\infty}\left|\zeta\left(\frac{1}{2}+i(\gamma_r-\gamma_s+t)\right)\right|\left|\Gamma\left(\frac{3}{2}-\beta_r-\beta_s+it\right)\right|\text{d} t=\int_{3-a}^a+\int_a^{+\infty}=J_3+J_4.
\]
   For $J_4$, similarly to what we did for $J_1$, we have
    \begin{equation*}
        \begin{aligned}
            &J_4=66.7\int_{ a}^{\infty}\left|t+a\right|^{27/164}\left|\Gamma\left(1-\beta_r-\beta_s+it\right)\right|dt\\&\le (2\pi)^{1/2}\cdot e^{1/6(2\alpha_0-1.5)}\cdot 66.7 \int_{3}^{+\infty}(2t)^{ 27/164}t^{1-2\alpha_0}e^{-\frac{\pi}{2}t}dt\\&\le 0.24113.
        \end{aligned}
    \end{equation*}
    We now shift our attention on $J_3$.\\
    We start considering the case $a>3$. If $3<a\le 4$, then we split $J_3$ as 
    \[
    J_3=\int_{-1}^1+\int_{1}^a,
    \]
    while if $a>4$, then we split $J_3$ into three parts:
    \[
    J_3=\int_{3-a}^{-1}+\int_{-1}^1+\int_{1}^a=J_{3,1}+J_{3,2}+J_{3,3}.
    \]
    Since we are estimating the absolute values of each of the integrals in which we divided $J_3$, we will just estimate $J_3$ when $a>4$ (although for some estimates where we divide by $a$ we use the worse bound $a>3$), as it will give the biggest contribution.\\
     To estimate $J_{3,1}$, an application of Lemma \ref{onehalf} and Lemma \ref{stirling} gives
    \begin{equation*}
        \begin{aligned}
            &J_{3,1}\le 66.7\cdot (2\pi)^{1/2}\cdot e^{1/6(2\alpha_0-1.5)}a^{27/164}\int_{1}^{\infty}\left(t/3+1\right)^{27/164}t^{1-2\alpha_0}e^{-\frac{\pi}{2}t}dt\\&\le 16.329\cdot a^{27/164}.
        \end{aligned}
    \end{equation*}
    The same holds for $J_{3,3}$, which becomes
     \begin{equation*}
        \begin{aligned}
            &J_{3,3}\le 16.329\cdot a^{27/164}.
        \end{aligned}
    \end{equation*}
        About $J_{3,2}$, we use the same argument as for $J_{2,2}$ and we get
    \begin{equation*}
        \begin{aligned}
            &J_{3,2}=66.7\int_{-1}^{1}|t+a|^{27/164}\left|\Gamma\left(\frac{3}{2}-\beta_r-\beta_s+it\right)\right|dt\\&\le 66.7\cdot a^{27/164}\int_{-1}^1|t/3+1|^{27/164}\left|\Gamma\left(\frac{3}{2}-\beta_r-\beta_s+it\right)\right|dt\\&\le 253.419 \cdot a^{27/164}.
        \end{aligned}
    \end{equation*}
    Combining the estimates for $J_{3,1},J_{3,2},J_{3,3}$ when $a>3$, we can conclude that, when $3<a\le T$, 
    \begin{equation}\label{estJ3}
        J_{3}\le 286.077 \cdot a^{27/164}.
    \end{equation}
The same method used to estimate \eqref{singleintegral} when $3<a\le T$ can be applied to the case $0\le a\le 3$. However, the contribution given by \eqref{singleintegral} when $0\le a\le 3$ is smaller. Hence, from now on, we will use the largest contribution which is given when $3<a\le T$.\\
To conclude, combining the estimates we found for $J_1,J_2,J_3,J_4$ with \eqref{remainderrhs} and the fact that $\beta_r,\beta_s\ge\alpha\ge\alpha_0$, it follows that \eqref{remainderrhs} is
   \begin{equation}\label{singleintfactor}
\begin{aligned}
    & \le \frac{ d_5}{2\pi}\cdot \frac{W}{\log W}\cdot\left(X^{\frac{3}{2}-2\alpha_0}+\left(\frac{U}{\mathcal{L}^2}\right)^{\frac{3}{2}-2\alpha_0}\right)\left(J_1+J_2+J_3+J_4\right),
\end{aligned}
\end{equation}
and, multiplying \eqref{singleintfactor} by the first factor in \eqref{afterhalaszmont} one gets
\begin{equation}\label{integralcontriution}
\begin{aligned}
    & \le d_4\cdot \frac{ d_5}{2\pi}\cdot \frac{W}{\log W}\cdot\left(X^{\frac{3}{2}-2\alpha_0}+\left(\frac{U}{\mathcal{L}^2}\right)^{\frac{3}{2}-2\alpha_0}\right)\left(J_1+J_2+J_3+J_4\right)\le c_5,
\end{aligned}
\end{equation}
where $c_5<1$ is a constant depending on $\alpha_0,u,x,v,w,T_0,T_1$ whose values can be found in Table \ref{cconstants} in Appendix \ref{numerical}.\\
\begin{comment}
\begin{color}{cyan}
    Now, we consider the case $-T\le a\le 0$, or equivalently, if we define $b=-a$, with $b>0$, the case $0\le b\le T$. Hence, in this case
    \[
    \int_{-\infty}^{+\infty}=\int_{-\infty}^{-3+b}+\int_{-3+b}^{3+b}+\int_{3+b}^\infty
    \]
    Regarding the quantity $\int_{3+b}^\infty$, we just use the same tecnique as before with $2t$ and the upper bound is the same as for $\int_{-\infty}^{-3-a}$, so there is no problem.\\
    For $\int_{-3+b}^{3+b}$ we get the exact same as $\int_{-3-a}^{3-a}$.\\
    We now consider the term $\int_{-\infty}^{-3+b}$. As before, we split in 
    \[
    \int_{-\infty}^{-3+b}=\int_{-\infty}^{-b}+\int_{-b}^{-3+b}.
    \]
    As before, we consider  the worst csase which will be $b>3.$ The quantity $\int_{-\infty}^{-b}$ is the same as $\int_{a}^{\infty}$. For the quantity $\int_{-b}^{-3+b}$, we have (when $b>4$),
    \[
    \int_{-b}^{-3+b}=\int_{-b}^{-1}+\int_{-1}^0+\int_{0}^{1}+\int_{1}^{-3+b}
    \]
    However, for each of them, using the change of variable $t=-\tilde{t}$ we can re-conduct ourselves to the first case and so the estimate is the same.
\end{color}
\end{comment}

\subsection{Conclusion} If we combine Lemma \ref{lowerbounddetector} and Lemma \ref{upperigma1}, it follows that
\begin{equation*}
    \begin{aligned}
        &J^2\cdot c_1^2  \le   J\cdot (c_2+c_3+c_4)\cdot  \frac{ X^{2-2\alpha}}{\delta\log T}+J^2\cdot  c_5,
    \end{aligned}
\end{equation*}
and hence,
\begin{equation*}
    \begin{aligned}
        &J^2\cdot (c_1^2-c_5)  \le   J\cdot (c_2+c_3+c_4)\cdot  \frac{ X^{2-2\alpha}}{\delta\log T},
    \end{aligned}
\end{equation*}
or equivalently,
\begin{equation}\label{upperboundJ}
    \begin{aligned}
        &J  \le  \frac{(c_2+c_3+c_4)}{(c_1^2-c_5)}\cdot  \frac{ X^{2-2\alpha}}{\delta\log T}.
    \end{aligned}
\end{equation}
To conclude the proof of Theorem \ref{th1}, it remains to multiply the upper bound \eqref{upperboundJ} for $J$ by the number of zeros contained in each rectangle \eqref{rectangles}. More precisely, multiplying \eqref{upperboundJ} by the quantity $\tilde{n}(\alpha)$ in Lemma \ref{numberzerosinintervalwithno2} we have
\begin{equation}\label{finallogfreeest}
    \begin{aligned}
        &N(\alpha,T)\le \frac{(c_2+c_3+c_4)}{(c_1^2-c_5)}\cdot  \frac{ X^{2-2\alpha}}{\delta\log T}\cdot (b_1\cdot \delta\cdot \log T+b_2)\\&\le X^{2-2\alpha}\cdot\left(C_1+C_2\right)= CT^{2x(1-\alpha)},
    \end{aligned}
\end{equation}
where
\begin{equation*}
    \begin{aligned}
        C_1&=\frac{(c_2+c_3+c_4)\cdot b_1}{(c_1^2-c_5)},\\
        C_2&=\frac{(c_2+c_3+c_4)\cdot b_2}{(c_1^2-c_5)\delta\log T}\le \frac{(c_2+c_3+c_4)\cdot b_2}{(c_1^2-c_5)\cdot \nu_i(T)\log T}\le \frac{(c_2+c_3+c_4)\cdot b_2}{(c_1^2-c_5)\cdot \nu_i(T_0)\log T_0}.
    \end{aligned}
\end{equation*}
Theorem \ref{th1} follows taking $B=2x$ and $C=C_1+C_2$ in \eqref{finallogfreeest} and replacing $\alpha$ with $\sigma$.
\section*{Acknowledgements}
I would like to thank O. Ramaré and S. Zuniga Alterman for their
insightful comments upon this article and in particular for having shared with me some useful unpublished results about Lemma \ref{ramarepreprint}. I am grateful to N. Ng, A. Zaman, J. Thorner, B. Kerr, and A. Yang for some suggestions upon this paper. Thanks also to my supervisor Timothy S. Trudgian for his support and helpful advice while working on this manuscript.
\appendix
\section{Computation and Numerical Values}\label{numerical}
In this Appendix, we list the numerical values for all the parameters and constants we encountered throughout the proof of Theorem \ref{th1} for every fixed interval $(T_0,T_1]$, and $[\alpha_0,1]$. To compute all the constants defined throughout the proof of Theorem \ref{th1}, one can use the python program available on github at \cite{Bellotti_Log-free} with $\texttt{t}=T_0$, $\texttt{tupp}=T_1$, $\texttt{sigma}=\alpha_0$, and $\texttt{u},\texttt{x},\texttt{v},\texttt{w}$ equal to the corresponding values listed in Table \ref{uxvw} ($\texttt{t},\texttt{tupp},$ $\texttt{sigma},\texttt{u},\texttt{x},\texttt{v},\texttt{w}$ are the variables defined in the python program). Furthermore, the values of $u,x,v,w$ were chosen using a stochastic optimization routine.

We briefly explain our choice for the length of the intervals $(T_0,T_1]$. First of all, none of the intervals $(T_0,T_1]$ intersects the range of values of $T$ for which two different zero-free regions become the sharpest ones (the range of values of $T$ for which each of the known zero-free regions becomes the widest are mentioned in Section \ref{background}).

The most sensitive area in which it is more difficult to get an improvement on \eqref{kadirilumleyng} is the range $3\cdot 10^{12}< T\le\exp(46.2)$ which is just above the Riemann height, and which correspond to the range of $T$ for which the classical zero-free region \eqref{classicalzerofree} is the largest. Hence, when $3\cdot 10^{12}<T\le \exp(46.2)$, we choose $(T_0,T_1]$ so that, if $T_0=\exp(n)$, then $T_1=\exp(n+1)$.

When $\exp(46.2)<T\le \exp(170.2)$, that is, when Ford's zero-free region \eqref{zerofreeintermediate} becomes sharper, a slightly enlargement of the intervals $(T_0,T_1]$ still allows us to get improvements on \eqref{kadirilumleyng}, but with the benefit of dealing with less cases. More precisely, we choose $(T_0,T_1]$ so that, if $T_0=\exp(n)$, then $T_1=\exp(n+10)$.

Then, for $\exp(170.2)<T\le \exp(481958)$, which is the range of $T$ for which the Littlewood zero-free region \eqref{littlewoodzerofree} is the sharpest, we further enlarge the size of the intervals for the same reason as before. More precisely, when $\exp(170.2)<T\le \exp(3000)$, if $T_0=\exp(n)$, then $T_1=\exp(n+500)$. When instead $\exp(3000)<T\le \exp(481958)$, we just consider a single interval. 
\begin{comment}
    The main reason why the splitting point for this case is $\exp(3000)$ is the fact that, for now, in most of the applications in which a log-free zero-density estimate would lead to improvements the value $T=\exp(3000)$ is the largest sensitive value for which an improvement in the zero-density estimate would have a significant influence in possible improvements of the topics we mentioned in the introduction. However, for completeness we do not stop at $T=\exp(3000)$ but cover all the $T$ up to $ \exp(481958)$, considering a unique interval $(\exp(3000),\exp(481958)]$.
\end{comment}

When $T>\exp(481958)$, as we mentioned in Section \ref{background}, the Korobov--Vinogradov zero-free region \eqref{kvzerofree} becomes the widest one. Although the range $(\exp(481958),$ $\exp(6.7\cdot 10^{12})]$ is unlikely to be required in many applications, for completeness we provide anyway a log-free zero-density estimate which holds uniformly for every $\sigma\in [0.985,1]$. More precisely, if we take $u=0.3520926$, $x=0.7238738$, $v=0.3616651$, $w=0.0000662$, we get $N(\sigma,T)\le 1.62\cdot 10^{11}\cdot T^{1.448(1-\sigma)}$ for all $\sigma\in [0.985,1]$ and $T\in (\exp(481958),\exp(6.7\cdot 10^{12})]$. When $T\ge \exp(6.7\cdot 10^{12})$, the estimate find by the author in \cite{bellotti2023explicit} is sharper than \eqref{kadirilumleyng} for every $\sigma\in [0.98,1]$. 
\begin{table}[h]
\def\arraystretch{1.3}
\centering
\caption{Values of constants $C,B$ of Theorem \ref{th1}}
\begin{tabular}{|c|c|c|c|c|}
\hline
$T_0$ & $T_1$ & $\alpha_0$ & $C$ & $B$\\
\hline
$3\cdot 10^{12} $ & $\exp(29)$ & $0.9927$ & $37341.72$ & $14.160$ \\
\hline
$\exp(29)$ & $\exp(30)$ & $0.9927$ & $ 37564.18$ & $14.084$ \\
\hline
$\exp(30)$ & $\exp(31)$ & $0.9925$ & $40014.59$ & $13.588$ \\
\hline
$\exp(31)$ & $\exp(32)$ & $0.9923$ & $42840.01$ & $13.108$ \\
\hline
$\exp(32)$ & $\exp(33)$ & $0.9921$ & $46015.02$ & $12.647$ \\
\hline
$\exp(33)$ & $\exp(34)$ & $0.9919$ & $49270.71$ & $12.226$ \\
\hline
$\exp(34)$ & $\exp(35)$ & $0.9918$ & $52249.66$ & $11.856$ \\
\hline
$\exp(35)$ & $\exp(36)$ & $0.9916$ & $55524.80$ & $11.505$ \\
\hline
$\exp(36)$ & $\exp(37)$ & $0.9914$ & $59715.72$ & $11.134$ \\
\hline
$\exp(37)$ & $\exp(38)$ & $0.9912$ & $64228.93$ & $10.782$ \\
\hline
$\exp(38)$ & $\exp(39)$ & $0.9910$ & $68446.65$ & $10.475$\\
\hline
$\exp(39)$ & $\exp(40)$ & $0.9909$ & $71907.32$ & $10.217$\\
\hline
$\exp(40)$ & $\exp(41)$ & $0.9907$ & $76267.80$ & $9.951$\\
\hline
$\exp(41)$ & $\exp(42)$ & $0.9906$ & $80004.48$ & $9.720$\\
\hline
$\exp(42)$ & $\exp(43)$ & $0.9905$ & $83811.09$ & $9.501$\\
\hline
$\exp(43)$ & $\exp(44)$ & $0.9903$ & $ 89209.33$ & $9.259$\\
\hline
$\exp(44)$ & $\exp(45)$ & $0.9901$ & $95233.68$ & $9.020$\\
\hline
$\exp(45)$ & $\exp(46.2)$ & $0.9900$ & $99227.30$ & $8.843$\\
\hline
$\exp(46.2)$ & $\exp(50)$ & $0.9899 $ & $104761.04 $ & $8.634 $\\
\hline
$\exp(50)$ & $\exp(60)$ & $0.9892 $ & $128806.85 $ & $7.936 $\\
\hline
$\exp(60)$ & $\exp(70)$ & $0.9876 $ & $194323.75$ & $6.661 $\\
\hline
$\exp(70)$ & $\exp(80)$ & $0.9860 $ & $278045.07 $ & $5.809$\\
\hline
$\exp(80)$ & $\exp(90)$ & $0.9850 $ & $370655.73$ & $5.216 $\\
\hline
$\exp(90)$ & $\exp(100)$ & $0.9850 $ & $425721.47$ & $4.831 $\\
\hline
$\exp(100)$ & $\exp(110)$ & $0.9850 $ & $488901.14$ & $4.513 $\\
\hline
$\exp(110)$ & $\exp(120)$ & $0.9850 $ & $545744.21$ & $4.264 $\\
\hline
$\exp(120)$ & $\exp(130)$ & $0.9850$ & $629490.27$ & $4.032$\\
\hline
$\exp(130)$ & $\exp(140)$ & $0.9850$ & $694045.43$ & $3.855$\\
\hline
$\exp(140)$ & $\exp(150)$ & $0.9850$ & $771373.78$ & $3.696$\\
\hline
$\exp(150)$ & $\exp(160)$ & $0.9850$ & $825913.07$ & $3.572$\\
\hline
$\exp(160)$ & $\exp(170.2)$ & $0.9850$ & $909966.07$ & $3.448$\\
\hline
$\exp(170.2)$ & $\exp(500)$ & $0.9850$  & $1.12\cdot 10^6$ & $3.337$ \\
\hline
$\exp(500)$ & $\exp(1000)$ & $0.9850$  & $6.23\cdot 10^6$ & $2.152$ \\
\hline
$\exp(1000)$ & $\exp(1500)$ & $0.9850$ & $2.12\cdot 10^7$ & $1.820$ \\
\hline
$\exp(1500)$ & $\exp(2000)$ & $0.9850$ & $6.47\cdot 10^7$ & $1.684$ \\
\hline
$\exp(2000)$ & $\exp(2500)$ & $0.9850$ & $1.73\cdot 10^8$ & $1.610$\\
\hline
$\exp(2500)$ & $\exp(3000)$ & $0.9850$ & $2.56\cdot 10^8$ & $1.577$\\
\hline
$\exp(3000)$ & $\exp(481958)$ & $0.9850$ & $5.76\cdot 10^8$ & $1.551$\\
\hline
\end{tabular}
\label{valuesCB}
\end{table}
\begin{table}[h]
\def\arraystretch{1.3}
\centering
\caption{Values of constants $b_1,b_2$ }
\begin{tabular}{|c|c|c|c|c|}
\hline
$T_0$ & $T_1$ & $\alpha_0$ & $b_1$ & $b_2$   \\
\hline
$3\cdot 10^{12} $ & $\exp(29)$ & $0.9927$ & $0.711$ & $2.113$\\
\hline
$\exp(29)$ & $\exp(30)$ & $0.9927$ & $0.711$ & $2.113$\\
\hline
$\exp(30)$ & $\exp(31)$ & $0.9925$ & $0.711$ & $2.116$\\
\hline
$\exp(31)$ & $\exp(32)$ & $0.9923$ & $0.711$ & $2.119$\\
\hline
$\exp(32)$ & $\exp(33)$ & $0.9921$ & $0.711$ & $2.122$\\
\hline
$\exp(33)$ & $\exp(34)$ & $0.9919$ & $0.712$ & $2.125$\\
\hline
$\exp(34)$ & $\exp(35)$ & $0.9918$ & $0.712$ & $2.127$\\
\hline
$\exp(35)$ & $\exp(36)$ & $0.9916$ & $0.712$ & $2.130$\\
\hline
$\exp(36)$ & $\exp(37)$ & $0.9914$ & $0.712$ & $2.133$\\
\hline
$\exp(37)$ & $\exp(38)$ & $0.9912$ & $0.712$ & $2.136$\\
\hline
$\exp(38)$ & $\exp(39)$ & $0.9910$ & $0.712$ & $2.139$\\
\hline
$\exp(39)$ & $\exp(40)$ & $0.9909$ & $0.713$ & $2.141$\\
\hline
$\exp(40)$ & $\exp(41)$ & $0.9907$ & $0.713$ & $2.144$\\
\hline
$\exp(41)$ & $\exp(42)$ & $0.9906$ & $0.713$ & $2.145$\\
\hline
$\exp(42)$ & $\exp(43)$ & $0.9905$ & $0.713$ & $2.147$\\
\hline
$\exp(43)$ & $\exp(44)$ & $0.9903$ & $0.713$ & $2.150$\\
\hline
$\exp(44)$ & $\exp(45)$ & $0.9901$ & $0.713$ & $2.153$\\
\hline
$\exp(45)$ & $\exp(46.2)$ & $0.9900$ & $0.714$ & $2.155$\\
\hline
$\exp(46.2)$ & $\exp(50)$ & $0.9899 $ & $0.714 $ & $2.156 $\\
\hline
$\exp(50)$ & $\exp(60)$ & $0.9892 $ & $0.715 $ & $2.169 $\\
\hline
$\exp(60)$ & $\exp(70)$ & $0.9876 $ & $0.717 $ & $2.194 $\\
\hline
$\exp(70)$ & $\exp(80)$ & $0.9860 $ & $0.719$ & $2.219$\\
\hline
$\exp(80)$ & $\exp(90)$ & $0.9850 $ & $0.721$ & $2.235$\\
\hline
$\exp(90)$ & $\exp(100)$ &$0.9850 $ & $0.721$ & $2.235$\\
\hline
$\exp(100)$ & $\exp(110)$ & $0.9850 $ & $0.721$ & $2.235$\\
\hline
$\exp(110)$ & $\exp(120)$ & $0.9850 $ & $0.721$ & $2.235$\\
\hline
$\exp(120)$ & $\exp(130)$ & $0.9850 $ & $0.721$ & $2.235$\\
\hline
$\exp(130)$ & $\exp(140)$ & $0.9850$ & $0.721$ & $2.235$ \\
\hline
$\exp(140)$ & $\exp(150)$ & $0.9850$ & $0.721$ & $2.235$ \\
\hline
$\exp(150)$ & $\exp(160)$ & $0.9850$ & $0.721$ & $2.235$ \\
\hline
$\exp(160)$ & $\exp(170.2)$ & $0.9850$ & $0.721$ & $2.235$ \\
\hline
$\exp(170.2)$ & $\exp(500)$ &  $0.9850$ & $0.721$ & $2.235$ \\
\hline
$\exp(500)$ & $\exp(1000)$ & $0.9850$ & $0.721$ & $2.235$   \\
\hline
$\exp(1000)$ & $\exp(1500)$ & $0.9850$ & $0.721$ & $2.235$  \\
\hline
$\exp(1500)$ & $\exp(2000)$ & $0.9850$ & $0.721$ & $2.235$  \\
\hline
$\exp(2000)$ & $\exp(2500)$ & $0.9850$ & $0.721$ & $2.235$\\
\hline
$\exp(2500)$ & $\exp(3000)$ & $0.9850$ & $0.721$ & $2.235$\\
\hline
$\exp(3000)$ & $\exp(481958)$ & $0.9850$ & $0.721$ & $2.235$\\
\hline
\end{tabular}
\label{alpha0b1b2}
\end{table}
\begin{table}[h]
\def\arraystretch{1.3}
\centering
\caption{Choice of parameters in \eqref{choiceparam} }
\begin{tabular}{|c|c|c|c|c|c|}
\hline
$T_0$ & $T_1$ & u & x & v & w \\
\hline
$3\cdot 10^{12} $ & $\exp(29)$ & $2.1781287$ & $7.0798370$ & $4.4744028$ & $0.4808273$ \\
\hline
$\exp(29)$ & $\exp(30)$ & $2.1653098$ & $7.0418787$& $4.4543359$ & $0.4781452$\\
\hline
$\exp(30)$ & $\exp(31)$ & $2.1031623$ & $6.7936945$& $4.2847829$ & $0.4594357$\\
\hline
$\exp(31)$ & $\exp(32)$ & $2.0368399$ & $6.5537594$& $4.1225941$ & $0.4381695$\\
\hline
$\exp(32)$ & $\exp(33)$ & $1.9765605$ & $6.3232916$& $3.9645830$ & $0.4185077$\\
\hline
$\exp(33)$ & $\exp(34)$ & $1.9180226$ & $6.1129209$& $3.8226214$ & $0.3996157$\\
\hline
$\exp(34)$ & $\exp(35)$ & $1.8662754$ & $5.9277629$& $3.7007007$ & $0.3836159$\\
\hline
$\exp(35)$ & $\exp(36)$ & $1.8173525$ & $5.7524453$& $3.5838346$ & $0.3679799$\\
\hline
$\exp(36)$ & $\exp(37)$ & $1.7666469$ & $5.5665117$& $3.4573946$ & $0.3515541$\\
\hline
$\exp(37)$ & $\exp(38)$ & $1.7189733$ & $5.3905275$& $3.3372497$ & $0.3356460$\\
\hline
$\exp(38)$ & $\exp(39)$ & $1.6775968$ & $5.2374475$& $3.2330781$ & $0.3226654$\\
\hline
$\exp(39)$ & $\exp(40)$ & $1.6401586$ & $5.1084977$& $3.1503483$ & $0.3118339$\\
\hline
$\exp(40)$ & $\exp(41)$ & $1.6037571$ & $4.9754347$& $3.0601163$ & $0.3005496$\\
\hline
$\exp(41)$ & $\exp(42)$ & $1.5712431$ & $4.8596030$& $2.9846610$ & $0.2913112$\\
\hline
$\exp(42)$ & $\exp(43)$ & $1.5381371$ & $4.7502479$& $2.9152677$ & $0.2813016$\\
\hline
$\exp(43)$ & $\exp(44)$ & $1.5053035$ & $4.6290398$& $2.8330162$ & $0.2710128$\\
\hline
$\exp(44)$ & $\exp(45)$ & $1.4748658$ & $4.5099741$& $2.7513548$ & $0.2612853$\\
\hline
$\exp(45)$ & $\exp(46.2)$ & $1.4468014$ & $4.4210629$& $2.6949834$ & $0.2533034$\\
\hline
$\exp(46.2)$ & $\exp(50)$ & $1.4190314$ & $4.3167121 $ & $2.6279248 $ & $0.2455362$ \\
\hline
$\exp(50)$ & $\exp(60)$ & $1.3227366 $ & $3.9675279 $ & $2.3928627$& $0.2159253$ \\
\hline
$\exp(60)$ & $\exp(70)$ & $1.1443596 $ & $3.3303140 $ & $1.9746460$& $0.1634613$ \\
\hline
$\exp(70)$ & $\exp(80)$ & $1.0275277$ & $2.9044834$ & $1.6974785$& $0.1318565$ \\
\hline
$\exp(80)$ & $\exp(90)$ & $0.9414561$ & $2.6078943 $ & $1.5092573 $& $0.1103318$\\
\hline
$\exp(90)$ & $\exp(100)$ & $0.8812460$ & $2.4151753$ & $1.3944136 $& $0.0986500$ \\
\hline
$\exp(100)$ & $\exp(110)$ & $0.8345856 $ & $2.2564675$ & $1.2987862 $& $0.0906041$ \\
\hline
$\exp(110)$ & $\exp(120)$ & $0.7956163 $ & $ 2.1317435$ & $1.2246505 $& $0.0835439$ \\
\hline
$\exp(120)$ & $\exp(130)$ & $0.7597190 $ & $2.0159946$ & $1.1545049 $& $0.0758754$\\
\hline
$\exp(130)$ & $\exp(140)$ & $0.7329142$ & $1.9273879$ & $1.1018686$ & $0.0711966$\\
\hline
$\exp(140)$ & $\exp(150)$ & $0.7086575$ & $1.8478283$ & $1.0532314$ & $0.0671401$\\
\hline
$\exp(150)$ & $\exp(160)$ & $0.6879899$ & $1.7855813$ & $1.0184406$ & $0.0633382$\\
\hline
$\exp(160)$ & $\exp(170.2)$ & $0.6693878$ & $1.7239944$ & $0.9794391$ & $0.0599159$\\
\hline
$\exp(170.2)$ & $\exp(500)$ & $0.6496901 $ & $1.6680151 $ & $0.9471296$ & $0.0558721$\\
\hline
$\exp(500)$ & $\exp(1000)$ & $0.4648597$ & $1.0757433 $ & $0.5873406 $ & $0.0212707 $\\
\hline
$\exp(1000)$ & $\exp(1500)$ & $0.4140847 $ & $0.9099961 $ & $0.4838796$ & $0.0123507$\\
\hline
$\exp(1500)$ & $\exp(2000)$ & $ 0.3945241$ & $0.8417122 $ & $0.4385802 $ & $0.0077839 $\\
\hline
$\exp(2000)$ & $\exp(2500)$ & $0.3814614$ & $0.8045939 $ & $0.4158624 $ & $0.0048886 $\\
\hline
$\exp(2500)$ & $\exp(3000)$ & $0.3756629$ & $0.7880794$ & $0.4059352$ & $0.0039808$\\
\hline
$\exp(3000)$ & $\exp(481958)$ & $0.3750548$ & $0.7752255$ & $0.3959471$ & $0.0049674 $\\
\hline
\end{tabular}
\label{uxvw}
\end{table}

\begin{table}[h]
\def\arraystretch{1.3}
\centering
\caption{Values of constants $d_1,d_2,d_3,d_4,d_5$ }
\begin{tabular}{|c|c|c|c|c|c|c|c|c|}
\hline
$T_0$ & $T_1$ & $d_{1,1}$ & $d_{1,2}$ & $d_{2,1}$ & $d_{2,2}$ & $d_3$ & $d_4$ & $d_5$  \\
\hline
$3\cdot 10^{12} $ & $\exp(29)$ & $0.107$ & $0.130$ & $0.027$ & $0.030$ & $0.003$ & $84.796$&$1.104$\\
\hline
$\exp(29)$ & $\exp(30)$ & $0.107$ & $0.130$& $0.027$& $0.030$& $0.003$& $84.570$& $1.104$ \\
\hline
$\exp(30)$ & $\exp(31)$ & $0.107$ & $0.130$& $0.027$& $0.030$& $0.003$& $86.156$& $1.104$ \\
\hline
$\exp(31)$ & $\exp(32)$ & $0.107$ & $0.130$& $0.027$& $0.030$& $0.003$& $87.265$& $1.106$ \\
\hline
$\exp(32)$ & $\exp(33)$ & $0.107$ & $0.131$& $0.027$& $0.030$& $0.003$& $88.852$& $1.107$ \\
\hline
$\exp(33)$ & $\exp(34)$ & $0.107$ & $0.131$& $0.027$& $0.030$& $0.003$& $90.014$& $1.109$ \\
\hline
$\exp(34)$ & $\exp(35)$ & $0.107$ & $0.132$& $0.027$& $0.030$& $0.003$& $90.957$& $1.110$ \\
\hline
$\exp(35)$ & $\exp(36)$ & $0.107$ & $0.132$& $0.027$& $0.030$& $0.002$& $92.011$& $1.111$ \\
\hline
$\exp(36)$ & $\exp(37)$ & $0.107$ & $0.133$& $0.027$& $0.030$& $0.002$& $93.509$& $1.113$ \\
\hline
$\exp(37)$ & $\exp(38)$ & $0.107$ & $0.133$& $0.027$& $0.030$& $0.003$& $95.141$& $1.115$ \\
\hline
$\exp(38)$ & $\exp(39)$ & $0.107$ & $0.134$& $0.027$& $0.031$& $0.003$& $96.691$& $1.117$ \\
\hline
$\exp(39)$ & $\exp(40)$ & $0.107$ & $0.134$& $0.027$& $0.031$& $0.002$& $97.416$& $1.118$ \\
\hline
$\exp(40)$ & $\exp(41)$ & $0.107$ & $0.134$& $0.027$& $0.031$& $0.002$& $98.881$& $1.119$ \\
\hline
$\exp(41)$ & $\exp(42)$ & $0.107$ & $0.135$& $0.027$& $0.031$& $0.002$& $99.901$& $1.120$ \\
\hline
$\exp(42)$ & $\exp(43)$ & $0.107$ & $0.135$& $0.027$& $0.031$& $0.002$& $100.416$& $1.121$ \\
\hline
$\exp(43)$ & $\exp(44)$ & $0.107$ & $0.135$& $0.028$& $0.031$& $0.002$& $102.078$& $1.123$ \\
\hline
$\exp(44)$ & $\exp(45)$ & $0.107$ & $0.136$& $0.028$& $0.031$& $0.002$& $104.242$& $1.124$ \\
\hline
$\exp(45)$ & $\exp(46.2)$ & $0.107$ & $0.136$& $0.028$& $0.031$& $0.002$& $104.616$& $1.125$ \\
\hline
$\exp(46.2)$ & $\exp(50)$ &  $0.107 $ & $0.136 $ & $0.028 $& $0.031$ & $0.002$ & $ 106.106$ & $1.126$\\
\hline
$\exp(50)$ & $\exp(60)$ & $0.108 $ & $0.138 $ & $0.028 $& $0.031$ & $0.002$ & $112.559$ & $1.132$\\
\hline
$\exp(60)$ & $\exp(70)$ & $0.108 $ & $0.142 $ & $0.028 $& $0.032$ & $0.002$ & $128.201$ & $1.146$\\
\hline
$\exp(70)$ & $\exp(80)$ & $0.108 $ & $0.145 $ & $0.028 $& $0.032$ & $0.002$ & $146.704$ & $1.156$\\
\hline
$\exp(80)$ & $\exp(90)$ & $0.108 $ & $0.147 $ & $0.028 $& $0.032$ & $0.002$ & $162.542$ & $1.164$\\
\hline
$\exp(90)$ & $\exp(100)$ & $0.107 $ & $0.147 $ & $0.027 $& $0.032$ & $0.002$ & $170.404$ & $1.163$\\
\hline
$\exp(100)$ & $\exp(110)$ & $0.107 $ & $0.146 $ & $0.027 $& $0.032$ & $0.002$ & $ 181.448$ & $1.159$\\
\hline
$\exp(110)$ & $\exp(120)$ & $0.107 $ & $0.145 $ & $0.027 $& $0.032$ & $0.002$ & $189.419$ & $1.157$\\
\hline
$\exp(120)$ & $\exp(130)$ & $0.107 $ & $0.146 $ & $0.027 $& $0.032$ & $0.002$ & $199.542$ & $1.158$\\
\hline
$\exp(130)$ & $\exp(140)$ & $0.107$ & $0.145$ & $0.027$ & $0.032$ & $0.002$ & $208.767$ & $1.156$\\
\hline
$\exp(140)$ & $\exp(150)$ & $0.107$ & $0.144$ & $0.027$ & $0.032$ & $0.002$ & $219.463$ & $1.153$\\
\hline
$\exp(150)$ & $\exp(160)$ & $0.106$ & $0.144$ & $0.027$ & $0.032$ & $0.002$ & $223.399$ & $1.151$\\
\hline
$\exp(160)$ & $\exp(170.2)$ & $0.106$ & $0.143$ & $0.027$ & $0.032$ & $0.002$ & $235.581$ & $1.150$\\
\hline
$\exp(170.2)$ & $\exp(500)$ & $0.106$ & $0.144$ & $0.027$ & $0.032$ & $0.002$ & $239.663$ & $1.151$\\
\hline
$\exp(500)$ & $\exp(1000)$ & $0.104$ & $0.139$ & $0.027$ & $0.031$ & $0.001$ & $575.833$ & $1.135$\\
\hline
$\exp(1000)$ & $\exp(1500)$ & $0.103$ & $0.133$ & $0.026$ & $0.030$ & $0.001$ & $ 1252.961$ & $1.116$\\
\hline
$\exp(1500)$ & $\exp(2000)$ & $0.103$ & $0.135$ & $0.026$ & $0.030$ & $0.001$ & $2659.132$ & $1.122$\\
\hline
$\exp(2000)$ & $\exp(2500)$ & $0.103$ & $0.142$ & $0.026$ & $0.031$ & $0.001$ & $3968.354$ & $1.147$\\
\hline
$\exp(2500)$ & $\exp(3000)$ & $0.103$ & $0.142$ & $0.026$ & $0.031$ & $0.001$ & $4910.047$ & $1.144$\\
\hline
$\exp(3000)$ & $\exp(481958)$ & $0.102$ & $0.128$ &  $0.026 $ & $0.029$ & $ 0.001$ & $9955.727$ & $1.097$\\
\hline
\end{tabular}
\label{dconstants}
\end{table}
\begin{table}[h]
\def\arraystretch{1.3}
\centering
\caption{Values of constants $c_1,c_2,c_3,c_4,c_5$ }
\begin{tabular}{|c|c|c|c|c|c|c|c|}
\hline
$T_0$ & $T_1$ & $c_1$ & $c_2$ & $c_3$ & $c_4$ & $c_5$  \\
\hline
$3\cdot 10^{12} $ & $\exp(29)$ &  $0.981$ & $195.230$ & $2222.717$ & $286.182$ & $0.062$ \\
\hline
$\exp(29)$ & $\exp(30)$ &  $0.981$& $195.734$ & $2243.325$ & $286.921$ & $0.061$\\
\hline
$\exp(30)$ & $\exp(31)$ &  $0.983$& $207.656$ & $2395.206$ & $304.415$ & $0.062$\\
\hline
$\exp(31)$ & $\exp(32)$ &  $0.984$& $220.851$ & $2563.120$ & $323.779$ & $0.064$\\
\hline
$\exp(32)$ & $\exp(33)$ &  $0.984$& $235.760$ & $2752.392$ & $345.657$ & $0.064$\\
\hline
$\exp(33)$ & $\exp(34)$ &  $0.984$& $250.519$ & $2941.458$ & $367.319$ & $0.066$\\
\hline
$\exp(34)$ & $\exp(35)$ &  $0.984$& $ 263.991$ & $3116.775$ & $387.084$ & $0.066$\\
\hline
$\exp(35)$ & $\exp(36)$ &  $0.984$& $278.761$ & $3308.747$ & $408.766$ & $0.066$\\
\hline
$\exp(36)$ & $\exp(37)$ &  $0.983$& $297.070$ & $3544.300$ & $435.641$ & $0.068$\\
\hline
$\exp(37)$ & $\exp(38)$ &  $0.983$& $317.207$ & $3803.499$ & $465.199$ & $0.069$\\
\hline
$\exp(38)$ & $\exp(39)$ &  $0.983$& $335.811$ & $4046.112$ & $492.513$ & $0.070$\\
\hline
$\exp(39)$ & $\exp(40)$ &  $0.982$& $350.391$ & $4241.658$ & $513.913$ & $0.071$\\
\hline
$\exp(40)$ & $\exp(41)$ &  $0.982$& $369.491$ & $4493.302$ & $541.959$ & $0.072$\\
\hline
$\exp(41)$ & $\exp(42)$ &  $0.982$& $385.420$ & $4707.817$ & $565.341$ & $0.072$\\
\hline
$\exp(42)$ & $\exp(43)$ &  $0.982$& $401.673$ & $4927.512$ & $589.199$ & $0.073$\\
\hline
$\exp(43)$ & $\exp(44)$ &  $0.981$& $424.491$ & $5229.292$ & $622.708$ & $0.074$\\
\hline
$\exp(44)$ & $\exp(45)$ &  $0.981$& $450.335$ & $5570.341$ & $660.661$ & $0.075$\\
\hline
$\exp(45)$ & $\exp(46.2)$ &  $0.981$& $466.658$ & $5799.757$ & $684.629$ & $0.075$\\
\hline
$\exp(46.2)$ & $\exp(50)$ & $0.981 $ & $488.558 $ & $6132.446 $ & $716.781$ & $0.074$\\
\hline
$\exp(50)$ & $\exp(60)$ & $0.979 $ & $592.937$ & $7615.822 $& $870.132$&$0.081$\\
\hline
$\exp(60)$ & $\exp(70)$ & $0.977 $ & $903.661 $ & $11836.516 $& $1326.769$ & $0.085$\\
\hline
$\exp(70)$ & $\exp(80)$ & $0.974 $ & $1293.789$ & $17238.260 $& $1900.434$ & $0.090$\\
\hline
$\exp(80)$ & $\exp(90)$ & $0.972 $ & $1725.506 $ & $23340.471 $& $2535.355$ & $0.096$\\
\hline
$\exp(90)$ & $\exp(100)$ & $0.973 $ & $2021.343 $ & $27715.348$& $2970.040$ & $0.093$\\
\hline
$\exp(100)$ & $\exp(110)$ & $0.972 $ & $2336.236 $ & $ 32428.941 $& $3432.726$ & $0.095$\\
\hline
$\exp(110)$ & $\exp(120)$ & $0.973 $ & $ 2639.408$ & $37050.947 $& $3878.189$& $0.093$\\
\hline
$\exp(120)$ & $\exp(130)$ & $0.972 $ & $3065.682 $ & $43481.947$& $4504.530$& $0.092$\\
\hline
$\exp(130)$ & $\exp(140)$ &  $0.972$& $3409.926$ & $48829.688$ & $5010.341$ & $0.089$\\
\hline
$\exp(140)$ & $\exp(150)$ &  $0.972$& $3792.749$ & $54797.783$ & $5572.839$ & $0.093$\\
\hline
$\exp(150)$ & $\exp(160)$ &  $0.971$& $4086.320$ & $59533.271$ & $6004.194$ & $0.088$\\
\hline
$\exp(160)$ & $\exp(170.2)$ &  $0.974$& $4549.556$ & $66812.743$ & $6684.845$ & $0.088$\\
\hline
$\exp(170.2)$ & $\exp(500)$ &  $0.972 $& $4968.978 $ & $82847.292 $ & $7301.119 $ & $0.093 $\\
\hline
$\exp(500)$ & $\exp(1000)$ & $0.965 $ & $30904.962 $ & $555939.909 $ & $45409.906$ & $ 0.086 $ \\
\hline
$\exp(1000)$ & $\exp(1500)$ & $0.973 $ & $113906.164$ & $2137924.019$ & $167366.915$ & $0.079 $\\
\hline
$\exp(1500)$ & $\exp(2000)$ & $0.988$ & $385838.839$ & $7457165.183$ & $566928.551$ & $0.040$ \\
\hline
$\exp(2000)$ & $\exp(2500)$ &  $0.965$& $936749.186$ & $18512203.901$ & $1376403.319$ & $0.094$\\
\hline
$\exp(2500)$ & $\exp(3000)$ &  $0.975$& $1419982.450$ & $28568596.811$ & $2086437.421$ & $0.100$\\
\hline
$\exp(3000)$ & $\exp(481958)$ & $0.984$ & $2211776.039$ &  $67229207.111$ & $3249851.641$ & $ 0.131$\\
\hline
\end{tabular}
\label{cconstants}
\end{table}  

\clearpage
\printbibliography
\end{document}